\title{Random signed measures}
\author{Riccardo Passeggeri\footnote{Imperial College London, UK: riccardo.passeggeri@imperial.ac.uk}}
\theoremstyle{plain}
\newtheorem{theorem}{Theorem}[section]
\newtheorem{lemma}[theorem]{Lemma}
\newtheorem{proposition}[theorem]{Proposition}
\newtheorem{corollary}[theorem]{Corollary}
\theoremstyle{remark}
\newtheorem{remark}[theorem]{Remark}
\newtheorem{definition}[theorem]{Definition}
\newtheorem*{example}{Example}
\begin{document}
	\maketitle
	
	\begin{abstract}
		Point processes and, more generally, random measures are ubiquitous in modern statistics. However, they can only take positive values, which is a severe limitation in many situations. In this work, we introduce and study random signed measures, also known as real-valued random measures, and apply them to constrcut various Bayesian non-parametric models. In particular, we provide an existence result for random signed measures, allowing us to obtain a canonical definition for them and solve a 70-year-old open problem. Further, we provide a representation of completely random signed measures (CRSMs), which extends the celebrated Kingman's representation for completely random measures (CRMs) to the real-valued case. We then introduce specific classes of random signed measures, including the Skellam point process, which plays the role of the Poisson point process in the real-valued case, and the Gaussian random measure. We use the theoretical results to develop two Bayesian nonparametric models -- one for topic modeling and the other for random graphs -- and to investigate mean function estimation in Bayesian nonparametric regression.
	\end{abstract}


	\section{Introduction}
A random measure is a measure-valued random variable. Point processes and, more generally, random measures play a crucial role in probability and statistical modelling. In Bayesian nonparametric statistics, random measures are often used as priors, or building blocks for priors (\textit{e.g.}~\citet{beraha2024,Bro1,Bro2,Catalano,griffin2017compound,James,Regazzini}). In survival analysis, random measures are employed to model the hazard function, which describes the instantaneous risk of an event occurring at a given time (\textit{e.g.}~\citet{DeBlasiPeccatiPrunster,LijoiNipoti,LoWeng1989}). In extreme values statistics, empirical processes, a subclass of random measures, play a crucial role as they capture the values over a threshold of the underlying process (\textit{e.g.}~\citet{BASRAK2009,CouVer22,kulik2020heavy,PassO}). These are just few examples of fields of applications of random measures. A complete account would be impossible due to their ubiquitous use in modern statistics.

Such a great variety of applications is due to the explicit, clear and flexible nature of the theoretical results of random measures, which allows them to model a vast array of different phenomena.  This led to the belief that such exceptionally nice results are specific to random measures and cannot be generalized, namely that it is impossible to obtain the same kind of results for more general objects, like for random signed measures. This belief is strongly supported by historical facts. The first rigorous development of the Poisson process goes back to the work of \citet{Levy}. More general point processes were considered by Palm in his dissertation \citet{Palm}, whose ideas were extended and made rigorous in the 50s by \citet{Khinchin} and \citet{Renyi56,Renyi57}. In the series of works \citet{PrekopaI,PrekopaII,PrekopaIII}, who was a student of R\'{e}nyi, for the first time tackled the problem of extending a collection of random variables indexed by sets to a real-valued random measure. He did it in a specific setting. Since then, a general necessary and sufficient result for the extension of a collection of random variables indexed by sets remained an open problem. Obtaining this result would mean providing a complete existence result for random signed measures. The importance of this extension result has also been extensively discussed in Daley-Vere Jones' book (see pages 18-22 in \citet{Daley}), among others.

In this paper, we solve this long-standing open problem by proving necessary and sufficient conditions for the extension of collections of random variables indexed by sets to random signed measures. This allows us to provide a canonical definition of random signed measures and this is the starting point of our work.

The paper is mainly divided in two parts. In the first one we develop the theory of random signed measures, while in the second one we present various examples and focus on Bayesian nonparametric applications.

In particular, in the first part of the paper we generalize the most important results for random measures, systematically extending the main results of the first three chapters of \citet{Kallenberg2}, including the mentioned extension result and the fact that the distribution of a random signed measure is determined by its finite-dimensional distributions. Further, we provide new types of results that are specific to random signed measures. For example, we show that a random signed measure possesses a Jordan decomposition in terms of two unique random measures.

We then restrict our focus to an important class of random signed measures: the completely random signed measures (CRSMs). Completely random measures (CRMs) constitute one of the most studied classes of random measures. CRMs are random measures whose increments are independent. \citet{Kingman} showed that a CRM has an almost surely unique representation in terms of three addends. The first one is a deterministic atomless measure. The second one is a fixed atomic component. The third one, called the ordinary component, is encoded in a Poisson point process on $S\times(0,\infty)$, where $(S,\mathcal{S})$ is a Borel space. This remarkable result provides a complete description of a CRM. CRMs are vastly used in Bayesian nonparametric analysis.

Similarly, here we define the CRSMs as random signed measures with independent increments. We generalize Kingman's representation and obtain that a CRSM has the same representation, where now the Poisson point process of the ordinary component is on $S\times\mathbb{R}$. This result provides a complete description of CRSMs.

Moreover, when there is no fixed component we show that the Jordan decomposition of a CRSM are two unique and \textit{independent} CRMs. This result allows to extend many results for CRMs to hold for CRSMs.

We remark that we use the general framework of \citet{Kallenberg2}, that we do not introduce any new assumption, and that random measures are examples of random signed measures. Thus, our results truly generalize the ones of random measures. Therefore, we debunk the common belief that the exceptionally nice theory of random measures cannot be generalized.

While proving these results, we also show using a counterexample that one of the main results of \citet{Daley} does not hold. We provide a correction of this result, which might also be of interest on its own.

In the second part of the paper, we focus on specific examples and applications. The first example is the Skellam point process. Poisson point processes are one of the most popular classes of random measures. One of the reasons for this popularity is that any simple point process with independent increments is a Poisson point process and vice-versa. We show an equivalent result for the signed case: any simple signed point process with independent increments is a Skellam point process and vice-versa. The Skellam point process is a signed point process obtained by the difference of two independent Poisson point processes. Hence, its finite-dimensional distributions are Skellam distributions. Skellam distributions were formally introduced in the 40s (\citet{Irwin,Skellam}) and they have been recently used for sports data, medicine, image analysis and differential privacy, see \textit{e.g.}~\citet{Aga,Karl3,Tomy}. In \citet{Bar} the authors introduced the Skellam process and used it in financial applications. Our definition of the Skellam (point) process includes all the definitions presented in the literature as special cases. 

The second main example is the Gaussian (random) measure, which arises naturally as limiting object in central limit theorems for measure-valued processes. Building on the work \citet{Horo-Gauss} we present it and discuss the conditions under which it is a random signed measure. 

For the third example we focus on CRSMs. We discuss how to (easily) construct CRSMs using any CRMs, thus providing a plethora of examples.

After these examples, we develop two Bayesian nonparametric models. The first one is a novel class of nonparametric prior distributions based on CRSMs. This class is a generalization and a real-valued extension of the model developed in \citet{Bro1}, further explored in \citet{Bro2,PassRM}. Under two different sets of assumptions, which accommodate different modeling needs, we provide an explicit formulation for the posterior, even in the case of multiple data points. Since we can now work with real values and not just positive ones, we can now use important distributions which were left out before, like the Gaussian distribution. Indeed, we present an explicit example which employs it.

The second model is a sparse random signed graph model. A signed graph is a graph in which each edge is associated with a positive or negative sign. Models for nodes interacting over such random signed graphs arise from various biological, social, and economic systems, see \citet{Siam}. In physics, signed graphs are a natural context for the Ising model (\textit{e.g.}~\citet{Ising}). Thanks to the work \citet{Derr}, there has been a recent surge of interest in signed graph neural networks. Our approach, which is similar to the one of \citet{Caron-and-Fox}, consists of viewing the random signed graph as a signed point process. After introducing the definition of exchangeable random signed measure, which is simply an extension of the one for random measures, we show that our random signed graph is exchangeable. Further, our model is based on a CRSM and we prove that the random signed graph is either sparse or dense depending on the L\'{e}vy measure of such CRSM.

In the last section of the paper we discuss the role of random signed measures in mean function estimation for Bayesian nonparametric regressions, which has been explored in \citet{DeBlasiPeccatiPrunster,GhosalVanDerVaart2007,IshwaranJames,LijoiNipoti,LoWeng1989,Naulet,PeccatiPrunster09,Wolpert}. We improve existing results and discuss how our results pave the way to study the problem in full generality, when the driving measure is a random signed measure.

We conclude this introduction by mentioning that, due to the lack of a general theoretical foundation, random signed measures have not been systematically studied in the literature and only scattered results have been obtained, see for example \citet{cohen,HELLMUND,Horo-Gauss,Jacob1995,Karr,Molchanov2019,nickl2009convergence}.

The paper is structured as follows. In Section \ref{Sec-measure} we develop the theory of random signed measures. In Section \ref{Sec-Examples} we discuss the examples, including the Skellam point process and the Gaussian random measure. Section \ref{Sec-applications} is devoted to the two Bayesian nonparametric models and the Bayesian mean function estimation problem. All the proofs and additional results are in the supplementary material file.

\section{Definition and properties of random signed measures}\label{Sec-measure}
We follow the general framework of \citet{Kallenberg2}. Let $(S,\mathcal{S},\hat{\mathcal{S}})$ be a localized Borel space, that is $(S,\mathcal{S})$ is a Borel space and $\hat{\mathcal{S}}$ is a localizing ring in $S$. Recall that a localizing ring $\hat{\mathcal{S}}$ is a ring contained in $\mathcal{S}$ such that, for every $B\in\hat{\mathcal{S}}$ and $C\in\mathcal{S}$ we have $B\cap C\in\hat{\mathcal{S}}$, and $\hat{\mathcal{S}}=\cup_n(\mathcal{S}\cap S_n)$ for some sets $S_n\uparrow S$ in $\hat{\mathcal{S}}$. Further, let $\mathcal{M}_S$ be the space of locally finite measures on $S$ and let $\mathcal{B}_{\mathcal{M}_S}$ be the $\sigma$-algebra in $\mathcal{M}_S$ generated by the projection maps $\pi_{B}:\mu\mapsto\mu(B)$, $B\in\mathcal{S}$. A random measure is a measurable map from  $(\Omega,\mathcal{F},\mathbb{P})$ to $(\mathcal{M}_{S},\mathcal{B}_{\mathcal{M}_S})$. 
We recall the definition of a countably additive set function on a ring.

\begin{definition}[Countably additive set function on a ring] \label{Def-signedmeasure-ring} A set function $\mu(A)$ defined on the elements of a ring $\mathcal{R}$ with values in $[-\infty,\infty]$ will be called countably additive, if $\mu(\emptyset)=0$ and if for every sequence $A_{1},A_{2}, . . .$ of disjoint sets of $\mathcal{R}$ for which $A=\bigcup_{k=1}^{\infty}A_{k}\in\mathcal{R}$ we have
	\begin{equation}\label{measure on a ring}
		\mu(A)=\sum_{k=1}^{\infty}\mu(A_{k}),
	\end{equation}
	and the relation (\ref{measure on a ring}) holds absolutely (namely independent of the order of its elements).
\end{definition}

Let $\mathsf{M}_{S}$ denote the space of all countably additive set functions on $\hat{\mathcal{S}}$ that are locally finite, namely they take values in $\mathbb{R}$. $\mathsf{M}_{S}$ is also called the space of signed Radon measures, though in the literature there is more than one definition of signed Radon measure. In this case we mean the dual space of $C_c(S)$, namely the space of continuous real-valued functions with compact support on $S$ (endowed with the natural locally convex topology). 

Let $\mathcal{B}_{\mathsf{M}_S}$ be the $\sigma$-algebra in $\mathsf{M}_S$ generated by the projection maps $\pi_{B}:\mu\mapsto\mu(B)$, $B\in\hat{\mathcal{S}}$, \textit{i.e.}~generated by the family of sets $\{\mu\in\mathsf{M}_S:\mu(B)<x\}$ where $B\in\hat{\mathcal{S}}$ and $x\in\mathbb{R}$. 

Let $\mathfrak{F}_{b}(\mathcal{S})$ be the set of bounded measurable functions from $\mathcal{S}$ to $\mathbb{R}$ with bounded support. In the following we show that $\mathcal{B}_{\mathsf{M}_S}$ is equivalently generated by all integration maps $\pi_f:\mu\mapsto\int fd\mu$ where $f\in\mathfrak{F}_{b}(\mathcal{S})$. As in \citet{Kallenberg2} we denote $\int fd\mu$ by $\mu(f)$. The first results we present discuss the nice properties of the space $(\mathsf{M}_{S},\mathcal{B}_{\mathsf{M}_S})$.

\begin{lemma}\label{lem-measurable-vector-space}
	$(\mathsf{M}_{S},\mathcal{B}_{\mathsf{M}_S})$ is a measurable vector space over the scalar field $\mathbb{R}$, that is $\mathsf{M}_{S}$ is a vector space and the operations of addition and scalar multiplication are measurable.
\end{lemma}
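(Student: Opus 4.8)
The plan is to verify two things: first, that $\mathsf{M}_S$ is a vector space over $\mathbb{R}$, and second, that the addition map $(\mu,\nu)\mapsto\mu+\nu$ and the scalar multiplication map $(c,\mu)\mapsto c\mu$ are measurable with respect to the product $\sigma$-algebras $\mathcal{B}_{\mathsf{M}_S}\otimes\mathcal{B}_{\mathsf{M}_S}$ and $\mathcal{B}_{\mathbb{R}}\otimes\mathcal{B}_{\mathsf{M}_S}$ respectively. The vector space structure is essentially routine: for $\mu,\nu\in\mathsf{M}_S$ and $c\in\mathbb{R}$, one defines $(\mu+\nu)(A)=\mu(A)+\nu(A)$ and $(c\mu)(A)=c\,\mu(A)$ for $A\in\hat{\mathcal{S}}$, and I would check that these are again locally finite countably additive set functions on $\hat{\mathcal{S}}$. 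Countable additivity is preserved because a finite linear combination of absolutely convergent series is absolutely convergent with the sums adding; local finiteness is immediate since the values remain real. The zero element is the null measure and the axioms follow pointwise from those of $\mathbb{R}$.

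For measurability I would exploit the defining feature of $\mathcal{B}_{\mathsf{M}_S}$: it is generated by the projection maps $\pi_B:\mu\mapsto\mu(B)$ for $B\in\hat{\mathcal{S}}$. A standard principle says that a map into $(\mathsf{M}_S,\mathcal{B}_{\mathsf{M}_S})$ is measurable if and only if its composition with each generator $\pi_B$ is measurable. So for addition it suffices to show that for each fixed $B\in\hat{\mathcal{S}}$ the map
\begin{equation*}
(\mu,\nu)\longmapsto \pi_B(\mu+\nu)=\mu(B)+\nu(B)
\end{equation*}
is $\mathcal{B}_{\mathsf{M}_S}\otimes\mathcal{B}_{\mathsf{M}_S}$-measurable. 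But this map is the composition of $(\mu,\nu)\mapsto(\pi_B(\mu),\pi_B(\nu))$, which is measurable by definition of the product $\sigma$-algebra, with the addition map $\mathbb{R}\times\mathbb{R}\to\mathbb{R}$, which is continuous hence Borel measurable. The same argument handles scalar multiplication via $(c,\mu)\mapsto c\,\pi_B(\mu)$, writing it as the composition of $(c,\mu)\mapsto(c,\pi_B(\mu))$ with the continuous multiplication map on $\mathbb{R}\times\mathbb{R}$.

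The one point deserving care is the measurability transfer principle itself, namely that a map $T$ into $\mathsf{M}_S$ is measurable as soon as $\pi_B\circ T$ is measurable for every $B\in\hat{\mathcal{S}}$. This holds because the preimages $T^{-1}(\pi_B^{-1}(E))=(\pi_B\circ T)^{-1}(E)$, for $E\in\mathcal{B}_{\mathbb{R}}$, lie in the source $\sigma$-algebra, and the sets $\pi_B^{-1}(E)$ generate $\mathcal{B}_{\mathsf{M}_S}$; since preimages commute with the generated-$\sigma$-algebra operation, $T^{-1}(\mathcal{B}_{\mathsf{M}_S})$ is contained in the source $\sigma$-algebra. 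I expect this step, rather than being a genuine obstacle, to be the only place where one must be slightly careful about the product structure: one should confirm that the generators of $\mathcal{B}_{\mathsf{M}_S}\otimes\mathcal{B}_{\mathsf{M}_S}$ really do pull back correctly, which is immediate since the coordinate projections of the product space are measurable and the relevant maps factor through finitely many evaluations $\pi_B$. No deeper structural property of $\mathsf{M}_S$ is needed, and in particular the equivalent generation of $\mathcal{B}_{\mathsf{M}_S}$ by the integration maps $\pi_f$, $f\in\mathfrak{F}_b(\mathcal{S})$, is not required here, though it could be used as an alternative route with identical reasoning.
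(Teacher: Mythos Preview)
Your proposal is correct and follows essentially the same route as the paper: both reduce measurability of the vector operations to measurability of the real-valued maps $(\mu,\nu)\mapsto\mu(B)+\nu(B)$ and $(c,\mu)\mapsto c\,\mu(B)$ using that $\mathcal{B}_{\mathsf{M}_S}$ is generated by the projections $\pi_B$. The only cosmetic difference is that you invoke continuity of addition and multiplication on $\mathbb{R}\times\mathbb{R}$ to conclude, whereas the paper writes out the preimages explicitly (a rational-number decomposition for $\{\eta(B)+\nu(B)<x\}$ and the substitution $\mu(B)\in\{x/c:x\in A\}$ for scalar multiplication with fixed $c$); your formulation also handles joint measurability in the scalar, which the paper treats only for fixed $c$.
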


\begin{lemma}\label{lem-uncited-1}
	The collection $\{\pi_f\}_{f\in\mathfrak{F}_{b}(\mathcal{S})}$ generates $\mathcal{B}_{\mathsf{M}_S}$.
\end{lemma}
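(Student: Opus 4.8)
The plan is to establish the two inclusions $\mathcal{B}_{\mathsf{M}_S}\subseteq\sigma(\{\pi_f\}_{f\in\mathfrak{F}_{b}(\mathcal{S})})$ and $\sigma(\{\pi_f\}_{f\in\mathfrak{F}_{b}(\mathcal{S})})\subseteq\mathcal{B}_{\mathsf{M}_S}$ separately. The first one is immediate: for any $B\in\hat{\mathcal{S}}$ the indicator $\mathbf{1}_B$ is bounded, measurable, and has bounded support (as $B\in\hat{\mathcal{S}}$), so $\mathbf{1}_B\in\mathfrak{F}_{b}(\mathcal{S})$, and $\pi_{\mathbf{1}_B}(\mu)=\mu(\mathbf{1}_B)=\mu(B)=\pi_B(\mu)$ for every $\mu$. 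Hence each generating map $\pi_B$ of $\mathcal{B}_{\mathsf{M}_S}$ coincides with one of the $\pi_f$, and the inclusion follows at once.

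For the reverse inclusion I would run the functional form of the standard machine, showing that $\pi_f$ is $\mathcal{B}_{\mathsf{M}_S}$-measurable for every $f\in\mathfrak{F}_{b}(\mathcal{S})$. First, for $f=\mathbf{1}_B$ with $B\in\hat{\mathcal{S}}$ measurability holds by the very definition of $\mathcal{B}_{\mathsf{M}_S}$. Next, for a simple function $f=\sum_{i=1}^{n}c_i\mathbf{1}_{B_i}$ with $c_i\in\mathbb{R}$ and $B_i\in\hat{\mathcal{S}}$, linearity of the signed integral gives $\pi_f(\mu)=\mu(f)=\sum_{i=1}^{n}c_i\,\pi_{B_i}(\mu)$, a finite linear combination of $\mathcal{B}_{\mathsf{M}_S}$-measurable real-valued maps, hence measurable.

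For a general $f\in\mathfrak{F}_{b}(\mathcal{S})$ I would fix a set $B\in\hat{\mathcal{S}}$ containing the support of $f$ and approximate $f$ by simple functions $f_n\to f$ pointwise with $|f_n|\le\|f\|_\infty\mathbf{1}_B$. Here the localizing-ring property $B\cap C\in\hat{\mathcal{S}}$ for $C\in\mathcal{S}$ guarantees that the level sets of $f_n$, once intersected with $B$, lie in $\hat{\mathcal{S}}$, so each $f_n$ is of the simple form already handled and $\pi_{f_n}$ is $\mathcal{B}_{\mathsf{M}_S}$-measurable. Since every $\mu\in\mathsf{M}_S$ is locally finite, its total variation is finite on $B$, and dominated convergence applied through the Jordan decomposition $\mu=\mu^{+}-\mu^{-}$ yields $\pi_{f_n}(\mu)=\mu(f_n)\to\mu(f)=\pi_f(\mu)$ for every $\mu$. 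Thus $\pi_f$ is a pointwise limit of $\mathcal{B}_{\mathsf{M}_S}$-measurable maps, hence itself measurable, which closes the second inclusion.

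The main obstacle, mild but genuinely the crux, lives in this last step and is due to the signed setting: one must check that $\mu(f)$ is well defined and finite so that the integral makes sense and dominated convergence is legitimate, which rests on local finiteness of $\mu$ together with the bounded support of $f$, and one must route the convergence through the total variation measure rather than through $\mu$ directly. The localizing-ring structure enters precisely to keep the level sets of the approximating simple functions inside $\hat{\mathcal{S}}$, and this is the only place where the specific framework of the excerpt is needed.
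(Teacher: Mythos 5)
Your proposal is correct and follows essentially the same route as the paper: the first inclusion via $\pi_{\mathbf{1}_B}=\pi_B$ for $B\in\hat{\mathcal{S}}$, and the reverse inclusion by the standard approximation machine (indicators, simple functions, then a limit of simple functions). The only cosmetic difference is that the paper splits $f=f_+-f_-$ and invokes monotone convergence, whereas you keep $f$ whole and pass the limit through dominated convergence via the Jordan decomposition of $\mu$; your version is, if anything, slightly more explicit about why the convergence of the signed integrals is legitimate.
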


\begin{lemma}\label{lem-inclusion}
	We have $\mathcal{M}_{S}\subset\mathsf{M}_{S}$ and $\mathcal{B}_{\mathcal{M}_S}\subset \mathcal{B}_{\mathsf{M}_S}$. Moreover, the inclusion map $\mu\hookrightarrow\mu$ from $(\mathcal{M}_{S},\mathcal{B}_{\mathcal{M}_S})$ to $(\mathsf{M}_{S},\mathcal{B}_{\mathsf{M}_S})$ is measurable. Conversely, the maps $\mu\mapsto\mu_+$ and $\mu\mapsto\mu_+$ from $(\mathsf{M}_{S},\mathcal{B}_{\mathsf{M}_S})$ to $(\mathcal{M}_{S},\mathcal{B}_{\mathcal{M}_S})$ are measurable.
\end{lemma}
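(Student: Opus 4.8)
The plan is to treat the three assertions in turn, relying throughout on the fact that, since $(S,\mathcal{S})$ is a Borel space, $\hat{\mathcal{S}}$ admits a \emph{countable} generating ring $\mathcal{R}_0$ (for each $n$ choose a countable ring generating $\mathcal{S}\cap S_n$ and take the union); this countability is what will ultimately rescue measurability from otherwise uncountable operations. First I would record that each $\mu\in\mathcal{M}_S$, being locally finite, restricts on $\hat{\mathcal{S}}$ to a nonnegative, real-valued, countably additive set function, hence lies in $\mathsf{M}_S$; the restriction is injective because $\mu(B)=\lim_n\mu(B\cap S_n)$ for $B\in\mathcal{S}$ with $B\cap S_n\in\hat{\mathcal{S}}$, so $\mathcal{M}_S$ is genuinely a subset of $\mathsf{M}_S$.

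For the measurability of the inclusion $\iota$, I would note that a generator $\pi_B$ of $\mathcal{B}_{\mathsf{M}_S}$ has $B\in\hat{\mathcal{S}}\subset\mathcal{S}$, so $\pi_B\circ\iota$ is exactly a generator of $\mathcal{B}_{\mathcal{M}_S}$; hence $\iota^{-1}(\mathcal{B}_{\mathsf{M}_S})\subset\mathcal{B}_{\mathcal{M}_S}$. For the reverse inclusion $\mathcal{B}_{\mathcal{M}_S}\subset\mathcal{B}_{\mathsf{M}_S}$ I first establish $\mathcal{M}_S\in\mathcal{B}_{\mathsf{M}_S}$: a signed measure is nonnegative iff it is nonnegative on $\mathcal{R}_0$ (approximating any $B\in\hat{\mathcal{S}}$ in total variation by elements of $\mathcal{R}_0$), so $\mathcal{M}_S=\bigcap_{A\in\mathcal{R}_0}\{\mu:\mu(A)\ge0\}$ is a countable intersection of members of $\mathcal{B}_{\mathsf{M}_S}$. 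Then each generator $\{\mu\in\mathcal{M}_S:\mu(B)<x\}$, $B\in\mathcal{S}$, can be written through $\mu(B)=\sup_n\mu(B\cap S_n)$ with $B\cap S_n\in\hat{\mathcal{S}}$, exhibiting it as the intersection of $\mathcal{M}_S$ with a member of $\mathcal{B}_{\mathsf{M}_S}$.

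The substantive part is the measurability of $\mu\mapsto\mu_+$ and $\mu\mapsto\mu_-$ (the Jordan parts, each well defined in $\mathcal{M}_S$ because the finite Jordan decompositions on the $S_n$ are consistent by uniqueness). It suffices to show $\mu\mapsto\mu_\pm(B)$ is $\mathcal{B}_{\mathsf{M}_S}$-measurable for every $B\in\mathcal{S}$, and via $\mu_\pm(B)=\sup_n\mu_\pm(B\cap S_n)$ I reduce to $B\in\hat{\mathcal{S}}$. Here lies the main obstacle: the Hahn--Jordan formula $\mu_+(B)=\sup\{\mu(A):A\in\hat{\mathcal{S}},\,A\subseteq B\}$ is an \emph{uncountable} supremum and not obviously measurable. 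The key step is to replace it by the countable supremum $\mu_+(B)=\sup_{A\in\mathcal{R}_0}\mu(A\cap B)$: the inequality $\le$ is immediate, while for $\ge$ I take the Hahn positive set $P$ on some $S_n\supseteq B$, approximate $B\cap P$ in $|\mu|$-total variation by some $A\in\mathcal{R}_0$, and use $(A\cap B)\triangle(B\cap P)\subseteq A\triangle(B\cap P)$ to get $\mu(A\cap B)>\mu_+(B)-\varepsilon$. Since each $\mu\mapsto\mu(A\cap B)=\pi_{A\cap B}(\mu)$ is measurable ($A\cap B\in\hat{\mathcal{S}}$) and the supremum is over the countable set $\mathcal{R}_0$, measurability follows; the argument for $\mu_-(B)=-\inf_{A\in\mathcal{R}_0}\mu(A\cap B)$ is symmetric.
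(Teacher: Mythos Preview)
Your proof is correct and follows essentially the same strategy as the paper: both reduce the a priori uncountable supremum $\mu_+(B)=\sup_{E\subset B}\mu(E)$ to a countable supremum over a countable generating ring via a total-variation approximation argument, and then use measurability of each $\pi_{A\cap B}$. Your presentation is in places slightly more careful than the paper's (you explicitly verify $\mathcal{M}_S\in\mathcal{B}_{\mathsf{M}_S}$, and your use of $\sup_{A\in\mathcal{R}_0}\mu(A\cap B)$ sidesteps the need to find approximants lying inside $B$), but the core idea is the same.
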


Lemma \ref{lem-inclusion} shows that the space of measure $(\mathcal{M}_{S},\mathcal{B}_{\mathcal{M}_S})$ is a subspace of $(\mathsf{M}_{S},\mathcal{B}_{\mathsf{M}_S})$. This shows that we are not restricting our analysis but on the contrary we are considering a more general space. We are now ready to define random signed measures.
\begin{definition}[Random signed measure]  A random signed measure on $S$ is a measurable map from $(\Omega,\mathcal{F},\mathbb{P})$ to $(\mathsf{M}_{S},\mathcal{B}_{\mathsf{M}_S})$. 
\end{definition}

We now present an auxiliary but fundamental lemma, which provides a useful characterization of the space $(S,\mathcal{S},\hat{\mathcal{S}})$.

\begin{lemma}\label{lem-countable-ring-existence}
	Let $(S,\mathcal{S},\hat{\mathcal{S}})$ be a localized Borel space, then there exists a countable ring of bounded Borel sets generating $\mathcal{S}$.
\end{lemma}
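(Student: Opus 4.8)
The plan is to build the desired ring by starting from a countable generating class for $\mathcal{S}$ and truncating its members to the bounded sets $S_n$. Since $(S,\mathcal{S})$ is a Borel space, it is countably generated, so there is a countable family $\{A_k\}_{k\in\mathbb{N}}\subseteq\mathcal{S}$ with $\sigma(\{A_k\})=\mathcal{S}$; indeed, transporting a countable generator of the Borel $\sigma$-algebra on a Borel subset of $[0,1]$ through a Borel isomorphism yields such a family. I would also fix the exhausting sequence $S_n\uparrow S$ with $S_n\in\hat{\mathcal{S}}$ supplied by the definition of the localizing ring.

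Next I would form the countable collection $\mathcal{C}:=\{A_k\cap S_n:k,n\in\mathbb{N}\}$. Because $A_k\in\mathcal{S}$ and $S_n\in\hat{\mathcal{S}}$, the defining property of a localizing ring gives $A_k\cap S_n\in\hat{\mathcal{S}}$, so every member of $\mathcal{C}$ is a bounded Borel set. Let $\mathcal{R}$ be the ring generated by $\mathcal{C}$. Two observations are then immediate. First, $\mathcal{R}$ is countable: the ring generated by a countable family is countable, since each of its members is obtained from finitely many generators by finitely many applications of union and relative difference, and there are only countably many such finite expressions. Second, $\mathcal{R}\subseteq\hat{\mathcal{S}}$: as $\hat{\mathcal{S}}$ is itself a ring containing $\mathcal{C}$, it contains the smallest ring over $\mathcal{C}$. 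Hence $\mathcal{R}$ is a countable ring of bounded Borel sets.

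It remains to verify that $\mathcal{R}$ generates $\mathcal{S}$. On the one hand $\mathcal{R}\subseteq\hat{\mathcal{S}}\subseteq\mathcal{S}$, so $\sigma(\mathcal{R})\subseteq\mathcal{S}$. On the other hand, for each fixed $k$ the exhaustion $S_n\uparrow S$ yields $A_k=\bigcup_n(A_k\cap S_n)$, a countable union of members of $\mathcal{C}\subseteq\mathcal{R}$, whence $A_k\in\sigma(\mathcal{R})$. Therefore $\mathcal{S}=\sigma(\{A_k\})\subseteq\sigma(\mathcal{R})$, and equality follows.

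The only genuinely delicate point is the tension between the two requirements on $\mathcal{R}$: its members must be bounded, that is, lie in $\hat{\mathcal{S}}$, yet the $\sigma$-algebra they generate must be all of $\mathcal{S}$, which will typically contain unbounded sets. The truncation-and-exhaustion device resolves this — intersecting with $S_n$ forces boundedness, while the relation $A_k=\bigcup_n(A_k\cap S_n)$ recovers each original generator at the level of $\sigma$-algebras. Everything else, namely countability of the generated ring and closure of $\hat{\mathcal{S}}$ under the ring operations, is routine.
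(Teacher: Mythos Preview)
Your proof is correct and self-contained, but it takes a different route from the paper. The paper invokes Lemma~1.3 in Kallenberg, which supplies a \emph{dissection system} on any localized Borel space: this is already a countable semi-ring of bounded Borel sets generating $\mathcal{S}$, so one only needs to pass to the ring of finite unions and observe (via Halmos or Bogachev) that the generated ring stays countable. Your argument instead starts from the abstract countable-generation property of Borel spaces, truncates each generator against the exhausting sequence $S_n$, and then generates a ring; the key observation that $A_k=\bigcup_n(A_k\cap S_n)$ recovers the full $\sigma$-algebra is exactly right.

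What each approach buys: the paper's dissection-system route is more concrete and yields a specific ring (finite unions of dissection sets) that is reused elsewhere in the paper, for instance in the one-to-one correspondence results for atomic measures. Your truncation-and-exhaustion argument is more elementary in that it does not rely on the existence of dissection systems as a black box, and it makes transparent why boundedness and generation are not in conflict. Either is perfectly adequate for the lemma as stated.
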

As shown in the appendix, an example of such generating countable ring is the collection of all the finite unions of the elements of a dissection system on $S$. 

Lemma \ref{lem-countable-ring-existence} is an important characterization of the space $(S,\mathcal{S},\hat{\mathcal{S}})$, because it allows for a more refined object as a countable generating set of $\mathcal{S}$. This in turn is useful in results like Theorem 2.15 in \citet{Kallenberg2}, which provides necessary and sufficient conditions for the extension of a collection of positive random variables indexed by sets \textit{over a ring} to a random measure. However, surprisingly, it has not been stated or used in \citet{Kallenberg2}.

The following theorem solves a 70-year-old open problem, which was first systematically attempted in \citet{PrekopaI,PrekopaII,PrekopaIII} and extensively discussed in \citet{Daley}. It provides necessary and sufficient conditions for the extension of a collection of real-valued random variables indexed by sets to a random signed measure, and it is a generalization of Theorem 2.15 in \citet{Kallenberg2}.

\begin{theorem}\label{thm-extension}
	Let $\mathcal{R}\subset\hat{\mathcal{S}}$ be any countable ring of bounded Borel sets generating $\mathcal{S}$. Given a collection of random variables $(\eta(U))_{U\in\mathcal{R}}$, there exists a random signed measure $\xi$ on $S$ with $\xi(U) \stackrel{a.s.}{=} \eta(U)$ for all $U\in\mathcal{R}$, iff
	\\ \textnormal{(i)}  $\eta(A\cup B)\stackrel{a.s.}{=}\eta(A)\cup\eta(B)$ for all $A,B\in \mathcal{R}$ disjoint,
	\\ \textnormal{(ii)} $\eta(A_n)\stackrel{a.s.}{\to}0$ as $A_n\downarrow\emptyset$ along $\mathcal{R}$,
	\\ \textnormal{(iii)} $\sup\big\{\sum_{k=1}^{n}|\eta(A_k)|:\, \textnormal{$n\in\mathbb{N}$, $A_k\in\mathcal{R}$ disjoint and\,$A_k\subset A$} \big\}<\infty$ a.s.~for all $A\in\mathcal{R}$.
	\\ In that case, $\xi$ is a.s.~unique.
\end{theorem}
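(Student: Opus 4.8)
The plan is to establish necessity from the pathwise properties of signed Radon measures, and sufficiency by performing a pathwise Jordan decomposition of $\eta$ on the countable ring and reducing each piece to Theorem~2.15 of \citet{Kallenberg2}. For necessity, observe that if $\xi$ is a random signed measure then for every $\omega$ the realization $\xi(\omega)\in\mathsf{M}_S$ is a genuine signed Radon measure on $\hat{\mathcal{S}}$: finite additivity of $\xi(\omega)$ gives (i); countable additivity together with the finiteness of the total variation on the bounded set $A_1$ yields continuity from above, so $|\xi(A_n)|\le|\xi|(A_n)\to0$, which is (ii); and local finiteness of $\xi(\omega)$ means precisely that $|\xi|(A)=\sup\{\sum_k|\xi(A_k)|\}<\infty$ on every bounded $A$, which is (iii). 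Each is a surely-valid pathwise statement, hence holds a.s.

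For sufficiency, the idea is to build the positive and negative variations directly on the ring. Define
\[
\eta^{+}(A)=\sup\{\eta(B):B\in\mathcal{R},\,B\subseteq A\},\qquad
\eta^{-}(A)=\sup\{-\eta(B):B\in\mathcal{R},\,B\subseteq A\}.
\]
Here the countability of $\mathcal{R}$, guaranteed by Lemma~\ref{lem-countable-ring-existence}, is essential: each supremum is over a countable family, so $\eta^{+}(A)$ and $\eta^{-}(A)$ are genuine random variables, and by (iii) they are finite a.s.\ (indeed $\eta^{\pm}(A)\le|\eta|(A)<\infty$). Applying (i) on the countably many relevant pairs, one checks on a single a.s.\ event that $\eta^{+}$ and $\eta^{-}$ are nonnegative and finitely additive, and that $\eta(A)=\eta^{+}(A)-\eta^{-}(A)$ for all $A\in\mathcal{R}$ (this identity follows from $\eta(A)-\eta(B)=\eta(A\setminus B)$ together with the fact that $B\mapsto A\setminus B$ is a bijection of the ring elements below $A$). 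The delicate point is continuity at $\emptyset$: I would first note that (i) and (ii) make $\eta$ countably additive on $\mathcal{R}$ in the sense of Definition~\ref{Def-signedmeasure-ring}, then establish continuity from below of $\eta^{+}$—for $C_n\uparrow C$ in $\mathcal{R}$ and any $B\subseteq C$ one has $\eta(B\cap C_n)\to\eta(B)$, whence $\eta^{+}(C)\le\sup_n\eta^{+}(C_n)$ and equality by monotonicity—and finally deduce continuity at $\emptyset$ from $\eta^{+}(A_n)=\eta^{+}(A_1)-\eta^{+}(A_1\setminus A_n)$ with $A_1\setminus A_n\uparrow A_1$. Thus $\eta^{+}$ and $\eta^{-}$ meet the hypotheses of Theorem~2.15 in \citet{Kallenberg2}, producing random measures $\xi^{+},\xi^{-}$ with $\xi^{\pm}(U)\stackrel{a.s.}{=}\eta^{\pm}(U)$. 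Setting $\xi=\xi^{+}-\xi^{-}$, which is a random signed measure by Lemmas~\ref{lem-measurable-vector-space} and~\ref{lem-inclusion}, gives $\xi(U)\stackrel{a.s.}{=}\eta(U)$ on $\mathcal{R}$.

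For uniqueness, suppose $\xi$ and $\xi'$ both extend $\eta$; by countability of $\mathcal{R}$ there is a single a.s.\ event on which $\lambda:=\xi-\xi'$ is a signed Radon measure vanishing on all of $\mathcal{R}$. Its Jordan decomposition gives locally finite positive measures $\lambda^{+},\lambda^{-}$ with $\lambda^{+}(R)=\lambda^{-}(R)$ for every $R\in\mathcal{R}$. The class $\mathcal{D}=\{A\in\mathcal{S}:\lambda^{+}(A\cap R)=\lambda^{-}(A\cap R)\ \text{for all }R\in\mathcal{R}\}$ contains $S$ and the ring $\mathcal{R}$ and, using $\lambda^{\pm}(R)<\infty$, is closed under complements and increasing countable unions, hence is a Dynkin system containing the $\pi$-system $\mathcal{R}$; by the $\pi$–$\lambda$ theorem $\mathcal{D}=\mathcal{S}$. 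Letting $R$ exhaust $S$ along $\mathcal{R}$ and using local finiteness then forces $\lambda^{+}=\lambda^{-}$, i.e.\ $\lambda=0$, so $\xi\stackrel{a.s.}{=}\xi'$.

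I expect the main obstacle to be the sufficiency direction, and within it the verification that the positive variation $\eta^{+}$—defined only through suprema over the ring—is itself continuous at $\emptyset$, equivalently countably additive. This is exactly where conditions (ii) and (iii) must be combined, the former driving the continuity-from-below argument and the latter guaranteeing finiteness, and it is where the countable generating ring of Lemma~\ref{lem-countable-ring-existence} does the crucial work of keeping every supremum measurable.
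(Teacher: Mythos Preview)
Your proposal is correct and follows essentially the same route as the paper: define $\eta^{\pm}(A)$ as countable suprema over the ring (using Lemma~\ref{lem-countable-ring-existence} for measurability), check the hypotheses of Theorem~2.15 in \citet{Kallenberg2}, and combine via Lemmas~\ref{lem-measurable-vector-space} and~\ref{lem-inclusion}. The only cosmetic difference is in verifying continuity at $\emptyset$ for $\eta^{+}$: the paper first proves countable additivity of $\eta^{+}$ directly (upper bound via countable additivity of $\eta$, lower bound via finite additivity) and then reads off $\eta^{+}(C_n)=\sum_{k\ge n}\eta^{+}(D_k)\to 0$ as the tail of a convergent series, whereas you go through continuity from below and the identity $\eta^{+}(A_n)=\eta^{+}(A_1)-\eta^{+}(A_1\setminus A_n)$; both are valid and use (iii) in the same way. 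Your uniqueness paragraph via the $\pi$--$\lambda$ theorem is more explicit than the paper, which simply invokes the a.s.\ uniqueness in Theorem~2.15 of \citet{Kallenberg2} for $\xi^{+}$ and $\xi^{-}$.
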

Theorem \ref{thm-extension} represents a complete existence (and uniqueness) result for our random signed measures. Moreover, Theorem \ref{thm-extension} generalizes Theorem 2.15 in \citet{Kallenberg2} because conditions (i) and (ii) in both theorems are the same and condition (iii) in Theorem \ref{thm-extension} always holds for positive-valued random variables satisfying conditions (i) and (ii). To be precise condition (ii) in \citet{Kallenberg2} has the convergence in probability, but that is equivalent to a convergence almost surely as seen in Theorem 9.1.XV in \citet{Daley}.

In Chapter 13 of \citet{cohen}, which is based on \citet{Jacodbook1979} and \citet{JacShir03}, the authors considered a definition of random signed measure whose existence, as they pointed out, is not assured. Theorem \ref{thm-extension} establishes the existence of this random signed measure, thereby strengthening the foundation of their framework and paving the way for possible extensions.

\begin{corollary}\label{co-extension}
	Fix a generating ring $\mathcal{R}\subset\hat{\mathcal{S}}$. Then the probability measures $\mu_{B_1,...,B_n}$ on $\mathbb{R}^n$, $B_1, . . . , B_n \in\mathcal{R}$,  $n\in\mathbb{N}$, are finite-dimensional distributions of some random measure $\xi$ on $S$, if and only if they are consistent and satisfy
	\\ \textnormal{(i)}  $\mu_{A,B,A\cup B}((x,y,z):x+y=z)=1$ for all $A,B\in \mathcal{R}$ disjoint,
	\\ \textnormal{(ii)} $\mathbb{P}(\lim\limits_{n\to\infty}\xi(A_n)=0)=1$ as $A_n\downarrow\emptyset$ along $\mathcal{R}$,
	\\ \textnormal{(iii)} $\mathbb{P}(\sup\big\{\sum_{k=1}^{n}|\xi(A_k)|:\, \textnormal{$n\in\mathbb{N}$, $A_k\in\mathcal{R}$ disjoint and\,$A_k\subset A$} \big\}<\infty)=1$
\end{corollary}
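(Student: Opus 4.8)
The plan is to reduce the statement to Theorem~\ref{thm-extension} by first manufacturing a canonical process $(\eta(U))_{U\in\mathcal{R}}$ whose finite-dimensional distributions are the prescribed $\mu_{B_1,\dots,B_n}$, and then translating conditions (i)--(iii) of the corollary into the hypotheses of that theorem for $\eta$. Throughout, the conditions written in terms of $\xi$ in (ii) and (iii) are to be read as conditions on this canonical process.

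First I would treat the sufficiency (``if'') direction. Since $\mathcal{R}$ is countable --- this is exactly what Lemma~\ref{lem-countable-ring-existence} guarantees --- the index set is countable, so consistency of the family $\{\mu_{B_1,\dots,B_n}\}$ lets me invoke the Kolmogorov extension theorem on the countable product $\mathbb{R}^{\mathcal{R}}\cong\mathbb{R}^{\mathbb{N}}$. This produces a probability space $(\Omega,\mathcal{F},\mathbb{P})$ carrying real random variables $(\eta(U))_{U\in\mathcal{R}}$ with the given finite-dimensional laws. Next I would check that conditions (i)--(iii) here are precisely conditions (i)--(iii) of Theorem~\ref{thm-extension} for $\eta$: condition (i), $\mu_{A,B,A\cup B}(\{x+y=z\})=1$, is by definition the statement $\eta(A\cup B)\stackrel{a.s.}{=}\eta(A)+\eta(B)$ for disjoint $A,B$; and (ii) and (iii) are verbatim the corresponding hypotheses with $\xi$ read as $\eta$. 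Applying Theorem~\ref{thm-extension} then yields a random signed measure $\xi$ with $\xi(U)\stackrel{a.s.}{=}\eta(U)$ for all $U\in\mathcal{R}$, whence the finite-dimensional distributions of $\xi$ over $\mathcal{R}$ coincide with the $\mu_{B_1,\dots,B_n}$, as required.

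For the necessity (``only if'') direction I would argue that if the $\mu_{B_1,\dots,B_n}$ are the finite-dimensional distributions of some random signed measure $\xi$, then consistency is automatic, and conditions (i)--(iii) are forced by $\xi$ being, pathwise, a locally finite countably additive signed measure on $\hat{\mathcal{S}}$: finite additivity gives (i), continuity from above at $\emptyset$ gives (ii), and finiteness of the total variation on each bounded $A$ gives (iii). This is precisely the necessity half already established inside Theorem~\ref{thm-extension}, so it transfers without extra work.

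The main obstacle is bookkeeping rather than depth: it is the careful verification that conditions (ii) and (iii), although written in terms of the not-yet-constructed $\xi$, are genuinely determined by the finite-dimensional distributions. This rests on the countability of $\mathcal{R}$ from Lemma~\ref{lem-countable-ring-existence}, which makes the events $\{\lim_n\eta(A_n)=0\}$ and $\{\sup\{\dots\}<\infty\}$ measurable on the countable product $\sigma$-algebra, so that their probabilities are fixed by the $\mu_{B_1,\dots,B_n}$ and the Kolmogorov extension applies cleanly. Once this is in place, the corollary is a direct restatement of Theorem~\ref{thm-extension} at the level of laws.
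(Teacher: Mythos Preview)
Your approach is correct and matches the paper's (implicit) argument: the paper does not spell out a separate proof of this corollary, treating it as immediate from Theorem~\ref{thm-extension} via the Kolmogorov extension theorem, exactly as you outline.

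One small correction: Lemma~\ref{lem-countable-ring-existence} asserts only the \emph{existence} of a countable generating ring; it does not say that the ring $\mathcal{R}$ fixed in the corollary is countable. The corollary is meant to be read with $\mathcal{R}$ countable, inheriting this hypothesis from Theorem~\ref{thm-extension} of which it is a corollary --- indeed, without countability the events in (ii) and (iii) need not be measurable on the product $\sigma$-algebra, and Theorem~\ref{thm-extension} would not apply directly. So your invocation of countability is right, just not via that lemma.
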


A similar result to Corollary \ref{co-extension} is stated in Theorem 10 in \citet{Jacob1995}. In particular, the authors focus on a less general framework (where $S$ is a compact Hausdorff second countable topological space) and state the existence of a random measure $\xi$ under condition (i) and the condition that $\lim\limits_{n\to\infty}\mathbb{P}(\sup_{C\subset A_n}|\xi(C)|\leq t)=1$ as $A_n\downarrow\emptyset$ along $\mathcal{R}$ for every $t>0$. Then one can wonder if we can replace conditions (ii) and (iii) with this condition both in Corollary \ref{co-extension} and possibly also in Theorem \ref{thm-extension}. The answer is negative, because even in their framework that condition is not sufficient. In other words, Theorem 10 in \citet{Jacob1995} and consequently Corollary 11 in \citet{Jacob1995} do not hold. We prove this using a counterexample in the supplementary material.

The reason for working with countably additive set functions on $\hat{\mathcal{S}}$ is the following. If we consider signed measures on $\mathcal{S}$ taking values in $[-\infty,\infty]$ and define random signed measures as measurable maps taking values in the space of such signed measures then we have problems of the form $\infty-\infty$. For example, let $\mu_1$ and $\mu_2$ be two measures then $\mu_1(S)-\mu_2(S)$ might not be well defined. On the other hand, we can show that the difference of \textit{any} two random measures uniquely generates a random signed measure.

\begin{lemma}\label{lem-uncited-2}
	Given any two random measures $\eta$ and $\gamma$, there is an almost surely unique random signed measure $\xi$ such that $\xi(B)\stackrel{a.s.}{=} \eta(B)-\gamma(B)$ for every $B\in\hat{\mathcal{S}}$.
\end{lemma}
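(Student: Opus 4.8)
The plan is to obtain $\xi$ not through the extension theorem but directly from the measurable vector-space structure of $\mathsf{M}_S$, and to reserve Theorem~\ref{thm-extension} (or an elementary uniqueness argument for signed Radon measures) for the uniqueness claim.

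\emph{Existence.} First I would regard $\eta$ and $\gamma$, which are measurable maps into $(\mathcal{M}_S,\mathcal{B}_{\mathcal{M}_S})$, as maps into $(\mathsf{M}_S,\mathcal{B}_{\mathsf{M}_S})$: by Lemma~\ref{lem-inclusion} the inclusion $\mathcal{M}_S\hookrightarrow\mathsf{M}_S$ is measurable, so $\eta,\gamma$ are random signed measures. By Lemma~\ref{lem-measurable-vector-space} subtraction is a measurable operation on $\mathsf{M}_S$ (it is addition composed with multiplication by $-1$), so the joint map $\omega\mapsto(\eta(\omega),\gamma(\omega))$ followed by subtraction, i.e. $\xi:=\eta-\gamma$, is again measurable into $(\mathsf{M}_S,\mathcal{B}_{\mathsf{M}_S})$ and is therefore a random signed measure. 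Because the vector operations on set functions are defined setwise, this already gives $\xi(B)=\eta(B)-\gamma(B)$ for every $B\in\hat{\mathcal{S}}$ and every $\omega$ (not merely almost surely). Here I would note that $\eta(B),\gamma(B)$ are finite since $\eta(\omega),\gamma(\omega)$ are locally finite and $B$ is bounded, so the difference is well defined, and that $\eta(\omega)-\gamma(\omega)$ indeed lands in $\mathsf{M}_S$ because $|\eta(B)-\gamma(B)|\le\eta(B)+\gamma(B)<\infty$ on each $B\in\hat{\mathcal{S}}$ and countable additivity is inherited from $\eta$ and $\gamma$.

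\emph{Uniqueness.} Here I would fix a countable generating ring $\mathcal{R}\subset\hat{\mathcal{S}}$, which exists by Lemma~\ref{lem-countable-ring-existence}. If $\xi'$ is any random signed measure with $\xi'(B)\stackrel{a.s.}{=}\eta(B)-\gamma(B)$ for all $B\in\hat{\mathcal{S}}$, then in particular $\xi'(U)\stackrel{a.s.}{=}\eta(U)-\gamma(U)\stackrel{a.s.}{=}\xi(U)$ for every $U\in\mathcal{R}$. Since $\mathcal{R}$ is countable, there is a single event of probability one on which $\xi'(\omega)$ and $\xi(\omega)$ agree on all of $\mathcal{R}$ simultaneously. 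The key deterministic fact I would then invoke is that two elements of $\mathsf{M}_S$ agreeing on the generating ring $\mathcal{R}$ coincide on $\hat{\mathcal{S}}$: writing $\lambda=\xi(\omega)-\xi'(\omega)$ and passing to its Jordan decomposition $\lambda=\lambda_+-\lambda_-$ reduces the claim to uniqueness of the two locally finite positive measures $\lambda_+,\lambda_-$ from their common values on $\mathcal{R}$, which follows from a $\pi$--$\lambda$ argument localized along an increasing sequence $R_n\uparrow S$ with $R_n\in\mathcal{R}$ (such a sequence exists since $\mathcal{R}$ is a ring, so $U\cap R_n\in\mathcal{R}$, and the dissection-system construction of Lemma~\ref{lem-countable-ring-existence} exhausts $S$). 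Hence $\xi(\omega)=\xi'(\omega)$ on that full-probability event, i.e. $\xi\stackrel{a.s.}{=}\xi'$. Alternatively, since $\xi$ is a genuine extension of the collection $(\eta(U)-\gamma(U))_{U\in\mathcal{R}}$, its mere existence forces conditions \textnormal{(i)--(iii)} of Theorem~\ref{thm-extension} to hold for this collection, and the a.s. uniqueness clause of that theorem yields $\xi\stackrel{a.s.}{=}\xi'$ directly.

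I expect the only real obstacle to be the deterministic uniqueness step for \emph{signed} Radon measures on a ring: unlike the positive case it cannot be read off from a monotone-class theorem verbatim, and the reduction to positive measures via the Jordan decomposition together with the localization to the finite sets $R_n$ is where care is needed. Everything else is a bookkeeping consequence of Lemmas~\ref{lem-inclusion} and~\ref{lem-measurable-vector-space} and, if one prefers the shortcut, of the a.s. uniqueness already contained in Theorem~\ref{thm-extension}.
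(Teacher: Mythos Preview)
Your proposal is correct and follows essentially the same approach as the paper: existence comes directly from Lemmas~\ref{lem-measurable-vector-space} and~\ref{lem-inclusion}, exactly as in the paper's one-line proof. Your uniqueness argument is more detailed than the paper's (which simply declares the whole lemma ``immediate'' from those two lemmas), but your route via the countable generating ring and the deterministic identification of signed Radon measures---or alternatively via the uniqueness clause of Theorem~\ref{thm-extension}---is sound and is really just making explicit what the paper leaves implicit.
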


In the next result we show the converse. In particular, we provide a Jordan decomposition result for random signed measures.
\begin{lemma}\label{lem-Jordan}
	Let $\xi$ be a random signed measure. Then, there exists two unique random measures $\xi_+$ and $\xi_-$ such that for every $\omega\in\Omega$ and every $B\in\hat{\mathcal{S}}$
	\begin{align*}
		\xi_+(\omega,B)&=\sup_{E\in \mathcal{S}, E\subset B}\xi(\omega,E),\quad
		\xi_-(\omega,B)=\sup_{E\in \mathcal{S}, E\subset B}-\xi(\omega,E),
	\end{align*}
	\begin{equation*}
		and\quad\xi(\omega,B)=\xi_+(\omega,B)-\xi_-(\omega,B).
	\end{equation*}
\end{lemma}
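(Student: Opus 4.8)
The plan is to perform the classical Hahn--Jordan decomposition of $\xi(\omega,\cdot)$ separately for each fixed $\omega$, read off the two supremum formulas as the positive and negative variations, and then promote the resulting set functions to random measures. The decomposition identity $\xi=\xi_+-\xi_-$ and the uniqueness assertion are then immediate from the pointwise construction, so the whole difficulty is concentrated in the measurability in $\omega$.

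First I would treat the pointwise part. Fix $\omega\in\Omega$. For any $B\in\hat{\mathcal{S}}$ we have $B\in\mathcal{S}\cap S_n$ for some $n$, and the restriction of $\xi(\omega,\cdot)$ to the trace $\sigma$-algebra $\mathcal{S}\cap S_n$ is a finite real-valued signed measure; classical theory then gives it a finite total variation and a Hahn set $P=P(\omega)$, and shows that its positive and negative variations are exactly the two suprema in the statement. Note that the suprema are well posed: if $E\in\mathcal{S}$ and $E\subset B\in\hat{\mathcal{S}}$ then $E=E\cap B\in\hat{\mathcal{S}}$ by the defining property of the localizing ring, so $\xi(\omega,E)$ is defined. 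Local finiteness of $\xi_\pm(\omega,\cdot)$ follows from finiteness of the total variation, countable additivity is the classical additivity of the variations, and $\xi(\omega,B)=\xi_+(\omega,B)-\xi_-(\omega,B)$ holds for every $\omega$ and $B$ by construction. Uniqueness is free, since the displayed formulas pin down $\xi_+$ and $\xi_-$ as set functions for each individual $\omega$.

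The main obstacle is measurability: $\omega\mapsto\xi_\pm(\omega,\cdot)$ must be $\mathcal{F}/\mathcal{B}_{\mathcal{M}_S}$-measurable, and the defining suprema run over the uncountable family $\{E\subset B\}$, so this is not automatic. The step I would invest in is a reduction of the supremum to the fixed countable ring $\mathcal{R}\subset\hat{\mathcal{S}}$ provided by Lemma \ref{lem-countable-ring-existence}, namely
\begin{equation*}
	\xi_+(\omega,B)=\sup_{E\in\mathcal{R},\,E\subset B}\xi(\omega,E),\qquad B\in\mathcal{R},
\end{equation*}
and symmetrically for $\xi_-$. The inequality ``$\ge$'' is trivial. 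For ``$\le$'' I would use that $\{E\in\mathcal{R}:E\subset B\}$ is an algebra on $B$ generating the trace $\sigma$-algebra $\mathcal{S}\cap B$, and apply the standard density theorem to the finite total-variation measure $\lvert\xi(\omega,\cdot)\rvert$ on $B$: the Hahn set $P\cap B$ can be approximated by some $F\in\mathcal{R}$ with $F\subset B$ and $\lvert\xi(\omega,\cdot)\rvert\big((P\cap B)\triangle F\big)<\varepsilon$, so that $\xi(\omega,F)$ lies within $\varepsilon$ of $\xi_+(\omega,B)=\xi(\omega,P\cap B)$. The delicate point here is precisely that, although the approximating set $F$ depends on $\omega$, it can always be drawn from the single countable ring $\mathcal{R}$, which is what makes the right-hand supremum countable.

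Once this reduction is established, the right-hand side is a countable supremum of the maps $\omega\mapsto\xi(\omega,E)$, each measurable because $\xi$ is a random signed measure, so $\omega\mapsto\xi_+(\omega,B)$ is measurable for every $B\in\mathcal{R}$. A monotone-class argument then extends this to all $B\in\mathcal{S}$, since the class of such $B$ is a $\lambda$-system containing the $\pi$-system $\mathcal{R}$ and $\mathcal{R}$ generates $\mathcal{S}$; this gives $\mathcal{F}/\mathcal{B}_{\mathcal{M}_S}$-measurability of $\xi_+$, and identically of $\xi_-$. Alternatively, one may bypass the reduction altogether by invoking the measurability of the map $\mu\mapsto\mu_+$ from $(\mathsf{M}_S,\mathcal{B}_{\mathsf{M}_S})$ to $(\mathcal{M}_S,\mathcal{B}_{\mathcal{M}_S})$ recorded in Lemma \ref{lem-inclusion} and writing $\xi_+=(\,\cdot\,)_+\circ\xi$ as a composition of measurable maps.
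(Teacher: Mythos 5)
Your proposal is correct and follows essentially the same route as the paper's proof: a pointwise Jordan decomposition of $\xi(\omega,\cdot)$ for each fixed $\omega$, followed by reducing the defining supremum to the countable generating ring $\mathcal{R}$ of Lemma \ref{lem-countable-ring-existence} via a symmetric-difference approximation, so that measurability of $\omega\mapsto\xi_\pm(\omega,B)$ becomes a countable supremum of measurable maps, and then an extension from the ring to all of $\mathcal{S}$ (the paper cites Lemma 1.14(iii) of Kallenberg where you use a monotone-class step). Your alternative shortcut via the measurability of $\mu\mapsto\mu_+$ from Lemma \ref{lem-inclusion} is also legitimate and non-circular, since that lemma is established earlier by the very same countable-ring reduction.
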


We call $\xi_+$ and $\xi_-$ the\textit{ Jordan decomposition} of $\xi$. We let $|\xi|:=\xi_+ +\xi_-$ and call it the total variation of $\xi$. Note that $|\xi|$ is a random measure. Lemma \ref{lem-Jordan} has immediate implications.

\begin{corollary}\label{co-atom}
	Any random signed measure has at most countably many fixed atoms.
\end{corollary}

\begin{corollary}\label{co-uncited-3}
	For any random signed measure $\xi$, there exists a bounded measure $\nu$ on $S$, such that for every measurable function $f \geq 0$ on $S$,
	\begin{equation*}
		\nu f=0\Leftrightarrow|\xi|f\stackrel{a.s.}{=}0\Rightarrow \xi f\stackrel{a.s.}{=}0.
	\end{equation*}
\end{corollary}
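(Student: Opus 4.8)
The plan is to reduce everything to the total variation random measure $|\xi| = \xi_+ + \xi_-$, whose status as a (positive) random measure is guaranteed by Lemma~\ref{lem-Jordan}, and then to build $\nu$ by hand from a normalized expectation of $|\xi|$. Writing $\eta := |\xi|$, the whole statement follows once I establish the single claim that there is a bounded measure $\nu$ on $S$ with $\nu f = 0 \iff \eta f \stackrel{a.s.}{=} 0$ for every measurable $f \geq 0$. Indeed, the final implication $|\xi|f \stackrel{a.s.}{=} 0 \Rightarrow \xi f \stackrel{a.s.}{=} 0$ is then immediate from $0 \le \xi_\pm \le |\xi|$: since $|\xi|f = \xi_+ f + \xi_- f$ with both terms nonnegative, $|\xi|f\stackrel{a.s.}{=}0$ forces $\xi_+ f \stackrel{a.s.}{=} 0$ and $\xi_- f \stackrel{a.s.}{=} 0$, whence $\xi f = \xi_+ f - \xi_- f \stackrel{a.s.}{=} 0$ with no $\infty - \infty$ ambiguity.

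For the construction, I would fix the localizing sequence $S_n \uparrow S$ in $\hat{\mathcal{S}}$, so that $\eta(S_n) < \infty$ a.s.\ for each $n$, and set
$$\nu(B) := \sum_{n\ge 1} 2^{-n}\, \mathbb{E}\!\left[\frac{\eta(B\cap S_n)}{1+\eta(S_n)}\right], \qquad B \in \mathcal{S}.$$
The first step is to check that each summand $B \mapsto \mathbb{E}[\eta(B\cap S_n)/(1+\eta(S_n))]$ is a genuine sub-probability measure: nonnegativity is clear, and countable additivity follows from the a.s.\ countable additivity of the random measure $\eta$ together with monotone convergence under the expectation, the weight $1/(1+\eta(S_n))$ being a fixed nonnegative random variable that commutes with the set-indexed sum. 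Summing the geometric weights gives $\nu(S) \le \sum_n 2^{-n} = 1$, so $\nu$ is bounded, and the same monotone-convergence argument yields, for every measurable $f\ge 0$,
$$\nu f = \sum_{n\ge 1} 2^{-n}\, \mathbb{E}\!\left[\frac{1}{1+\eta(S_n)}\int_{S_n} f\, d\eta\right].$$

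It then remains to verify the equivalence. Since every term on the right is nonnegative, $\nu f = 0$ holds iff each expectation vanishes; because the factor $1/(1+\eta(S_n))$ is strictly positive a.s.\ (using $\eta(S_n) < \infty$ a.s.) and the integrand is nonnegative, this is equivalent to $\int_{S_n} f\, d\eta \stackrel{a.s.}{=} 0$ for every $n$. Intersecting the corresponding countably many almost-sure events and then letting $n\to\infty$ with $S_n \uparrow S$, monotone convergence gives $\eta f = \int_S f\, d\eta \stackrel{a.s.}{=} 0$; the converse direction is trivial since $\int_{S_n} f\, d\eta \le \eta f$. This closes the equivalence and hence the corollary.

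I expect the only delicate point to be the bookkeeping of the almost-sure qualifiers: each identity $\int_{S_n} f\, d\eta = 0$ holds off a null set depending on $n$, and one must pass to the countable intersection before invoking monotone convergence in order to reach a single almost-sure statement for $\eta f$. The measure-theoretic facts (countable additivity of $\nu$ and the Tonelli-type interchange of expectation and integration for nonnegative integrands) are routine once $\eta = |\xi|$ is known to be a random measure.
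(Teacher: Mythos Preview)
Your proof is correct and follows essentially the same approach as the paper: reduce to the total variation $|\xi|$ via Lemma~\ref{lem-Jordan}, then invoke the corresponding result for (positive) random measures, with the final implication coming from $0\le\xi_\pm\le|\xi|$. The only difference is that the paper simply cites Corollary~2.5 in \citet{Kallenberg2} for the existence of $\nu$, whereas you carry out the standard construction $\nu(B)=\sum_n 2^{-n}\mathbb{E}[\eta(B\cap S_n)/(1+\eta(S_n))]$ explicitly; your verification of the equivalence via strict positivity of $1/(1+\eta(S_n))$ and monotone convergence is exactly what underlies that cited result.
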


In the following results we present a representation of locally finite countably additive set functions (resp.~random signed measures), which generalizes the representation for measures (resp.~random measures).
\begin{lemma}\label{lem-deterministic-representation}
	Any $\mu\in\mathsf{M}_S$ has the following decomposition
	\begin{equation}\label{eq-measure}
		\mu=\alpha_+-\alpha_-+\sum_{k\leq\kappa_+}\beta_{+,k}\delta_{\sigma_{+,k}}-\sum_{k\leq\kappa_-}\beta_{-,k}\delta_{\sigma_{-,k}}
	\end{equation}
	where $\alpha_+,\alpha_-\in\mathcal{M}_S$ and they are atomless, $\kappa_+,\kappa_-\in\mathbb{N}\cup\{0\}\cup\{\infty\}$, $\beta_{+,1},\beta_{-,1},\beta_{+,2},\beta_{-,2},...>0$, and distinct $\sigma_{+,1},\sigma_{-,1},\sigma_{+,2},\sigma_{-,2},...\in S$. Moreover, the Jordan decomposition of $\mu$ is given by the measures
	\begin{equation*}
		\mu_+=\alpha_+\sum_{k\leq\kappa_+}\beta_{+,k}\delta_{\sigma_{+,k}}\quad\text{and}\quad \mu_-=\alpha_-+\sum_{k\leq\kappa_-}\beta_{-,k}\delta_{\sigma_{-,k}}.
	\end{equation*}
	The representations for $\mu$, $\mu_+$, and $\mu_-$ are unique up to the order of terms, and we may choose $\alpha_+$, $\alpha_-$, $\kappa_+$, $\kappa_-$, and all $(\beta_{+,k},\sigma_{+,k})$ and  $(\beta_{-,k},\sigma_{-,k})$ to be measurable functions of $\mu$, $\mu_+$ and $\mu_-$. Conversely, any locally finite sum as in $(\ref{eq-measure})$, involving measurable functions $\alpha_+$, $\alpha_-$, $\kappa_+$, $\kappa_-$, $(\beta_{+,k},\sigma_{+,k})$, $(\beta_{-,k},\sigma_{-,k})$ as above, defined on some measurable space $(A,\mathcal{B})$, uniquely determine $\mu$, $\mu_+$, and $\mu_-$ as measurable functions on $A$.
\end{lemma}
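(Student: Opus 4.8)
The plan is to reduce the signed problem to two positive (measure) problems via the Jordan decomposition, apply the known diffuse--atomic representation of measures to each, and then transport all of this through the measurable maps $\mu\mapsto\mu_\pm$. First I would fix $\mu\in\mathsf{M}_S$ and invoke the pointwise Jordan--Hahn decomposition recorded in Lemma~\ref{lem-Jordan}: since $\mu$ is a locally finite countably additive set function, there are mutually singular measures $\mu_+,\mu_-$, given exactly by the stated suprema, with $\mu=\mu_+-\mu_-$; both are locally finite (as $|\mu|=\mu_++\mu_-$ is) and hence lie in $\mathcal{M}_S$. The structural fact I would keep in reserve is the mutual singularity of $\mu_+$ and $\mu_-$: a Hahn decomposition $S=P\cup N$ concentrates $\mu_+$ on $P$ and $\mu_-$ on $N$, so the atoms of $\mu_+$ and of $\mu_-$ sit in disjoint sets. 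This is precisely what guarantees that the combined family $\{\sigma_{+,k}\}\cup\{\sigma_{-,k}\}$ consists of \emph{distinct} points, as claimed.

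Next I would apply to each positive measure the classical diffuse--atomic decomposition for locally finite measures on a Borel space (see \citet{Kallenberg2}): $\mu_+$ splits uniquely, up to reordering of atoms, as $\mu_+=\alpha_++\sum_{k\leq\kappa_+}\beta_{+,k}\delta_{\sigma_{+,k}}$ with $\alpha_+$ atomless, $\kappa_+\in\mathbb{N}\cup\{0,\infty\}$, $\beta_{+,k}>0$ and distinct $\sigma_{+,k}$, and analogously for $\mu_-$. Substituting into $\mu=\mu_+-\mu_-$ yields (\ref{eq-measure}), and reading off the positive and negative parts recovers the stated formulas for $\mu_+$ and $\mu_-$. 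Uniqueness follows in two layers: the pair $(\mu_+,\mu_-)$ is determined by $\mu$ because the Jordan decomposition is the minimal (mutually singular) one, already established in Lemma~\ref{lem-Jordan}; and given $\mu_\pm$, the atomless part $\alpha_\pm$ and the atomic data $(\beta_{\pm,k},\sigma_{\pm,k})$ are unique up to order since the atoms are exactly the points carrying positive $\mu_\pm$-mass and $\alpha_\pm$ is the remaining diffuse part.

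The measurability claim is where I expect the real work to lie, so I would spend the bulk of the argument there. By Lemma~\ref{lem-inclusion} the maps $\mu\mapsto\mu_+$ and $\mu\mapsto\mu_-$ from $(\mathsf{M}_S,\mathcal{B}_{\mathsf{M}_S})$ to $(\mathcal{M}_S,\mathcal{B}_{\mathcal{M}_S})$ are measurable, so it suffices to know that, for a \emph{positive} measure, the diffuse part and a measurable enumeration of the atoms with their masses depend measurably on the measure. This is the content of the measurable version of the diffuse--atomic representation of (random) measures in \citet{Kallenberg2}, which produces $\alpha_+$, $\kappa_+$ and a measurable sequence $(\beta_{+,k},\sigma_{+,k})_{k\leq\kappa_+}$ as measurable functions of $\mu_+$, and likewise for $\mu_-$. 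Composing with the measurable maps $\mu\mapsto\mu_\pm$ then exhibits all of $\alpha_+,\alpha_-,\kappa_+,\kappa_-,(\beta_{+,k},\sigma_{+,k}),(\beta_{-,k},\sigma_{-,k})$ as measurable functions of $\mu$, $\mu_+$ and $\mu_-$. The main obstacle is exactly this measurable selection of atoms; if the cited statement is not available verbatim in the localized Borel-space setting, I would reprove it there using the countable generating ring from Lemma~\ref{lem-countable-ring-existence} to locate atoms measurably.

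Finally, for the converse I would verify that a locally finite sum of the form (\ref{eq-measure}) built from measurable data on $(A,\mathcal{B})$ defines measurable maps $\mu,\mu_+,\mu_-$. Each Dirac term $\beta_{\pm,k}\delta_{\sigma_{\pm,k}}$ is a measurable $\mathcal{M}_S$-valued map because $(\beta,\sigma)\mapsto\beta\delta_\sigma$ is measurable; countable sums and the difference are measurable by Lemma~\ref{lem-measurable-vector-space}; and local finiteness ensures the result lands in $\mathsf{M}_S$, with the induced $\mu_+,\mu_-$ measurable by the same reasoning. This last step is routine, so the genuine difficulty of the lemma is concentrated entirely in the measurable atomic decomposition invoked above.
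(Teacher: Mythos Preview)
Your proposal is correct and follows essentially the same route as the paper: Jordan-decompose $\mu$ into $\mu_+,\mu_-\in\mathcal{M}_S$, apply Kallenberg's measurable diffuse--atomic representation (Lemma~1.6 in \citet{Kallenberg2}) to each, and transport the measurable selections through the maps $\mu\mapsto\mu_\pm$ of Lemma~\ref{lem-inclusion}; the converse is handled via Lemma~\ref{lem-measurable-vector-space} and the inclusion map. The only cosmetic difference is that the paper cites the deterministic Jordan decomposition directly (Lemmas~\ref{lem-vecchio-2}--\ref{lem-vecchio-3}) rather than the random version in Lemma~\ref{lem-Jordan}, and it does not spell out your nice observation that mutual singularity forces the atom locations $\{\sigma_{+,k}\}$ and $\{\sigma_{-,k}\}$ to be disjoint.
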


\begin{corollary}\label{co-atomic decomposition}
	Let $\xi$ be a random signed measure on $S$, then
	\begin{equation}\label{representation}
		\xi=\alpha_+-\alpha_-+\sum_{k\leq\kappa_+}\beta_{+,k}\delta_{\sigma_{+,k}}-\sum_{k\leq\kappa_-}\beta_{-,k}\delta_{\sigma_{-,k}}
	\end{equation}
	where $\alpha_+$ and $\alpha_-$ are diffuse random measures, $\kappa_+$ and $\kappa_-$ are random variables in $\mathbb{N}\cup\{0\}\cup\{\infty\}$, $\beta_{+,1},\beta_{-,1},\beta_{+,2},\beta_{-,2},...$ are positive random variables, and $\sigma_{+,1},\sigma_{-,1},\sigma_{+,2},\sigma_{-,2},...$ are almost surely distinct random points in $S$.
\end{corollary}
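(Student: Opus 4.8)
The plan is to lift the deterministic decomposition of Lemma~\ref{lem-deterministic-representation} to the random setting, exploiting the fact that that lemma already produces all the components as \emph{measurable} functions of $(\mu,\mu_+,\mu_-)$; the whole proof is then a composition-of-measurable-maps argument. First I would invoke Lemma~\ref{lem-Jordan} to obtain the Jordan decomposition of $\xi$ into two random measures $\xi_+$ and $\xi_-$ with $\xi=\xi_+-\xi_-$, so that $\omega\mapsto\xi_+(\omega)$ and $\omega\mapsto\xi_-(\omega)$ are measurable maps into $(\mathcal{M}_S,\mathcal{B}_{\mathcal{M}_S})$ while $\omega\mapsto\xi(\omega)$ is measurable into $(\mathsf{M}_S,\mathcal{B}_{\mathsf{M}_S})$. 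Consequently the triple map $\omega\mapsto\bigl(\xi(\omega),\xi_+(\omega),\xi_-(\omega)\bigr)$ is measurable into the product space $\mathsf{M}_S\times\mathcal{M}_S\times\mathcal{M}_S$, which is exactly the domain on which Lemma~\ref{lem-deterministic-representation} supplies its selection maps.

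Next, for each fixed $\omega$ the element $\xi(\omega)\in\mathsf{M}_S$ admits the decomposition \eqref{eq-measure}, with $\xi_+(\omega)$ and $\xi_-(\omega)$ as its positive and negative parts. Composing the measurable selection maps $\alpha_\pm$, $\kappa_\pm$, $(\beta_{\pm,k},\sigma_{\pm,k})$ from that lemma with the measurable triple map from the first step, I obtain measurable functions of $\omega$. Hence $\alpha_+$ and $\alpha_-$ are random measures, $\kappa_+$ and $\kappa_-$ are $(\mathbb{N}\cup\{0\}\cup\{\infty\})$-valued random variables, each $\beta_{\pm,k}$ is a positive random variable, and each $\sigma_{\pm,k}$ is a random point of $S$. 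To make the indexing by the random counts $\kappa_\pm$ honest, one adopts a convention such as $\beta_{\pm,k}:=0$ for $k>\kappa_\pm$, so that the atom sequences are defined and jointly measurable on all of $\Omega$, and the sums in \eqref{representation} are unaffected.

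It then remains to transfer the pointwise structural properties of Lemma~\ref{lem-deterministic-representation} to their distributional form. Since $\alpha_\pm(\omega)$ is atomless for every $\omega$, the random measures $\alpha_\pm$ are diffuse; since the points $\sigma_{+,1},\sigma_{-,1},\sigma_{+,2},\dots$ are distinct for every $\omega$, they are a fortiori almost surely distinct; and identity \eqref{representation} holds surely, being the pointwise identity \eqref{eq-measure} evaluated at each $\omega$. I expect the only genuinely delicate point to be the measurable bookkeeping of the atom sequences indexed by the random, possibly infinite, counts $\kappa_\pm$ -- one must be sure the enumeration furnished by the deterministic lemma can be patched across the events $\{\kappa_\pm=m\}$ in a jointly measurable fashion -- but this is precisely the measurable-selection assertion already contained in Lemma~\ref{lem-deterministic-representation}, so the corollary follows by composition.
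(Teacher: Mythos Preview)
Your proposal is correct and follows essentially the same route as the paper, which simply records that the corollary ``follows from Lemma~\ref{lem-deterministic-representation}''; your write-up just spells out the composition-of-measurable-maps argument that the paper leaves implicit. One minor simplification: since the proof of Lemma~\ref{lem-deterministic-representation} already shows that all the selection maps are measurable functions of $\mu\in\mathsf{M}_S$ alone (via $\mu\mapsto\mu_\pm$ from Lemma~\ref{lem-inclusion}), you do not need to form the triple $(\xi,\xi_+,\xi_-)$ or invoke Lemma~\ref{lem-Jordan} separately---composing directly with $\omega\mapsto\xi(\omega)$ suffices.
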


We can further divide the atomic component of a random signed measure into two parts: a fixed atomic component and a non-fixed atomic component. As usual, we refer to a random signed measure as `atomless' if it has no fixed atomic component and `diffuse' if it lacks both fixed and non-fixed atomic components.

\begin{definition}
	[signed point process] Following the representation in equation $(\ref{representation})$, by a signed point process we mean a random signed measure such that $\beta_{+,k}$ and $\beta_{-,k}$ take value in $\mathbb{N}$ and $\alpha_+(S)=\alpha_-(S)=0$.
\end{definition}
A signed point process $\xi$ is said to be simple if all its weights $\beta_{+,k}$ and $\beta_{-,k}$ equal 1. Let $(T,\mathcal{T},\hat{\mathcal{T}})$ be a localized Borel space. By a marked signed point process on $S$ we mean a simple signed point process $\xi$ on a product space $S\times T$, such that $|\xi|(\{s\} \times T) \leq1$ for all $s\in S$. Note that the projection $\xi(\cdot\times T)$ is not required to be locally finite. In the next result we provide a fundamental property of random signed measures: the distribution of a random signed measure is determined by its finite-dimensional distributions.
\begin{proposition}\label{pro-fdd=d}
	Let $\mathcal{I}\subset\hat{\mathcal{S}}$ be a generating semi-ring. Then, for any random signed measures $\xi$ and $\eta$ on $S$, we have $\xi\stackrel{d}{=}\eta$ iff $(\xi(I_1),...,\xi(I_n))\stackrel{d}{=}(\eta(I_1),...,\eta(I_n))$, for every $I_1,...,I_n\in\mathcal{I}$ and $n\in\mathbb{N}$.
\end{proposition}

A random measure can be equivalently defined as a locally finite kernel from $\Omega$ to $S$ (see Lemma 1.14 in \citet{Kallenberg2}), namely as a function from $\xi:\Omega\times \mathcal{S}\mapsto[0,\infty]$ such that, for fixed $\omega\in\Omega$, $\xi(\omega,\cdot)$ is a locally finite measure on $S$ and, for fixed $B\in\mathcal{S}$, $\xi(\cdot, B)$ is a $\mathcal{F}$-measurable function. A similar definition applies to random signed measures.

We say that $\eta$ is a \textit{pre-kernel} from $\Omega$ to $S$ if $\eta$ is a map from $\Omega\times \hat{\mathcal{S}}\mapsto \mathbb{R}$ such that, for fixed $\omega\in\Omega$, $\eta(\omega,\cdot)$ is a countably additive set function on $\hat{\mathcal{S}}$ and, for fixed $B\in\hat{\mathcal{S}}$, $\eta(\cdot, B)$ is a $\mathcal{F}$-measurable function.

\begin{lemma}\label{lem-kernel}
	The following statements are equivalent:
	\\ \textnormal{(i)} $\xi$ is a pre-kernel from $\Omega$ to $S$,
	\\  \textnormal{(ii)} $\xi$ is a random signed measure on $S$,
	\\  \textnormal{(iii)} $\xi(\cdot,B)$ is $\mathcal{F}$-measurable for every $B\in\mathcal{I}$, where $\mathcal{I}\subseteq\hat{\mathcal{S}}$ is a semi-ring generating $\mathcal{S}$, and $\xi(\omega,\cdot)$ is a countably additive set function on $\hat{\mathcal{S}}$ for every $\omega\in\Omega$.
\end{lemma}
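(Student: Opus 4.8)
\section*{Proof proposal for Lemma \ref{lem-kernel}}

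The plan is to establish the cyclic chain of implications, where the equivalence of (i) and (ii) is essentially a restatement of how $\mathcal{B}_{\mathsf{M}_S}$ is defined, the implication (ii)$\Rightarrow$(iii) is immediate, and the substantive content lies in (iii)$\Rightarrow$(i). Throughout I use that $\mathcal{B}_{\mathsf{M}_S}$ is, by definition, generated by the projections $\{\pi_B\}_{B\in\hat{\mathcal{S}}}$.

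First I would dispatch (i)$\Leftrightarrow$(ii). If $\xi$ is a random signed measure, then for each fixed $\omega$ the element $\xi(\omega,\cdot)\in\mathsf{M}_S$ is a countably additive set function on $\hat{\mathcal{S}}$, and for each fixed $B\in\hat{\mathcal{S}}$ the map $\xi(\cdot,B)=\pi_B\circ\xi$ is $\mathcal{F}$-measurable as the composition of the $\mathcal{F}/\mathcal{B}_{\mathsf{M}_S}$-measurable $\xi$ with the $\mathcal{B}_{\mathsf{M}_S}$-measurable projection $\pi_B$; hence $\xi$ is a pre-kernel. Conversely, if $\xi$ is a pre-kernel then $\omega\mapsto\xi(\omega,\cdot)$ takes values in $\mathsf{M}_S$, and since a map into a space whose $\sigma$-algebra is generated by a family of maps is measurable precisely when every coordinate $\pi_B\circ\xi=\xi(\cdot,B)$ is measurable, the pre-kernel property yields $\mathcal{F}/\mathcal{B}_{\mathsf{M}_S}$-measurability. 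This settles (i)$\Leftrightarrow$(ii), and (ii)$\Rightarrow$(iii) is then trivial because $\mathcal{I}\subseteq\hat{\mathcal{S}}$.

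The heart of the argument is (iii)$\Rightarrow$(i). Assuming $\xi(\omega,\cdot)$ is countably additive on $\hat{\mathcal{S}}$ for every $\omega$ and $\xi(\cdot,I)$ is measurable for every $I\in\mathcal{I}$, I must upgrade to measurability of $\xi(\cdot,B)$ for all $B\in\hat{\mathcal{S}}$. Finite additivity shows that the class $\mathcal{D}=\{B\in\hat{\mathcal{S}}:\xi(\cdot,B)\text{ is }\mathcal{F}\text{-measurable}\}$ contains the finite disjoint unions of members of $\mathcal{I}$ and is closed under proper differences. Fixing $I\in\mathcal{I}$, I consider $\mathcal{C}_I=\{B\in\mathcal{S}:B\subseteq I,\ \xi(\cdot,B)\text{ measurable}\}$. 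Since each $\xi(\omega,\cdot)$ is a finite signed measure on subsets of $I\in\hat{\mathcal{S}}$, it is continuous from below along increasing sequences, so $\mathcal{C}_I$ is closed under increasing limits; together with closure under proper differences and $I\in\mathcal{C}_I$, this makes $\mathcal{C}_I$ a $\lambda$-system. As $\mathcal{I}$ is a semi-ring it is a $\pi$-system, and $\{J\in\mathcal{I}:J\subseteq I\}$ generates the trace $\sigma$-algebra $\mathcal{S}\cap I$, so Dynkin's $\pi$--$\lambda$ theorem gives $\mathcal{S}\cap I\subseteq\mathcal{C}_I$.

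Finally I pass from subsets of a single $I\in\mathcal{I}$ to an arbitrary $B\in\hat{\mathcal{S}}$. Using the covering property of the generating semi-ring (countably many $I_1,I_2,\dots\in\mathcal{I}$ with $\bigcup_k I_k=S$, available from the dissection/countable ring of Lemma \ref{lem-countable-ring-existence}), I decompose $B$ into the disjoint pieces $B_k=B\cap I_k\setminus\bigcup_{j<k}I_j$, each lying in $\hat{\mathcal{S}}$ (since $\hat{\mathcal{S}}$ absorbs intersections with $\mathcal{S}$-sets) and contained in $I_k$. The previous step makes each $\xi(\cdot,B_k)$ measurable, and by countable additivity $\xi(\cdot,B)=\sum_k\xi(\cdot,B_k)$ pointwise, exhibiting $\xi(\cdot,B)$ as a pointwise limit of measurable functions, hence measurable; thus $\xi$ is a pre-kernel. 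The main obstacle is precisely this upgrade: $\mathcal{I}$ is neither a ring nor closed under large unions, and since signed measures are not monotone one cannot approximate general sets by naive monotone-class arguments. The resolution is to localize to subsets of each $I\in\mathcal{I}$, where finiteness of $\xi(\omega,\cdot)$ restores continuity from below and legitimizes the $\pi$--$\lambda$ step, and only then to reassemble via countable additivity.
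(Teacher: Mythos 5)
Your proof is correct and follows essentially the same route as the paper: (i)$\Leftrightarrow$(ii) via the generating projections $\pi_B$, and (iii)$\Rightarrow$(i) by a good-sets argument that exploits the finiteness of $\xi(\omega,\cdot)$ on bounded sets to obtain continuity along monotone sequences. The only (cosmetic) difference is in the final step: the paper invokes Kallenberg's local monotone class lemma directly on the class of bounded sets where $\xi(\cdot,B)$ is measurable (as in the proof of Proposition \ref{pro-fdd=d}), whereas you localize to each $I\in\mathcal{I}$, apply Dynkin's $\pi$--$\lambda$ theorem there, and reassemble an arbitrary $B\in\hat{\mathcal{S}}$ by countable additivity.
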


The possibility to define random signed measures using a kernel-like formulation, as it happens for random measures, shows that our definition is a natural generalization of the concept of random measure.

The next result, which follows from Theorem \ref{thm-extension}, provides the definition of a conditional random signed measure. For random measures, this result is used to prove the existence of the doubly stochastic and cluster processes, like the Cox process.
\begin{corollary}\label{co-uncited-4}
	For any random signed measure $\xi$ on $S$ and any sub $\sigma$-algebra $\mathcal{G}$ of $\mathcal{F}$, there exist $\mathcal{G}$-measurable kernels $\eta_+ = \mathbb{E}[\xi_+|\mathcal{G}]$ and $\eta_- = \mathbb{E}[\xi_-|\mathcal{G}]$ and pre-kernel $\eta = \mathbb{E}[\xi|\mathcal{G}]$ from $\Omega$ to $S$, such that $\eta_+(B) = \mathbb{E}[\xi_+(B)|\mathcal{G}]$ a.s. for all $B\in\hat{\mathcal{S}}$ and similarly for $\eta_-$ and $\eta$. If $\mathbb{E}[|\xi|]$ is locally finite, then $\eta_+$ and $\eta_-$ are random measures on $S$ and $\eta=\eta_+-\eta_-= \mathbb{E}[\xi|\mathcal{G}]$ is a random signed measure on $S$.
\end{corollary}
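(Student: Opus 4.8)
The plan is to reduce the whole statement to the positive parts and then apply the extension theorem. By Lemma~\ref{lem-Jordan} we may write $\xi=\xi_+-\xi_-$ with $\xi_+,\xi_-$ random measures, so everything is about conditional expectations of random measures and their difference. Using Lemma~\ref{lem-countable-ring-existence} I would fix a countable ring $\mathcal{R}\subset\hat{\mathcal{S}}$ of bounded Borel sets generating $\mathcal{S}$ and work with the collections $(\mathbb{E}[\xi_+(U)\mid\mathcal{G}])_{U\in\mathcal{R}}$, $(\mathbb{E}[\xi_-(U)\mid\mathcal{G}])_{U\in\mathcal{R}}$ and $(\mathbb{E}[\xi(U)\mid\mathcal{G}])_{U\in\mathcal{R}}$, choosing fixed $\mathcal{G}$-measurable versions; since $\mathcal{R}$ is countable, every identity verified ``a.s.\ for each $U$'' holds simultaneously off a single null set, and since the constructed set functions depend only on ring values they are automatically $\mathcal{G}$-measurable.

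For the kernels $\eta_\pm$ I would first treat the nonnegative case. Setting $\eta_+(U):=\mathbb{E}[\xi_+(U)\mid\mathcal{G}]\in[0,\infty]$, finite additivity over disjoint $A,B\in\mathcal{R}$ is inherited from $\xi_+(A\cup B)=\xi_+(A)+\xi_+(B)$ by linearity of conditional expectation, and $\sigma$-additivity along any fixed countable $\mathcal{R}$-decomposition $U=\bigcup_k U_k$ follows from conditional monotone convergence applied to $\sum_{k\le m}\xi_+(U_k)\uparrow\xi_+(U)$ (the increasing version, which avoids any integrability requirement). This identifies $\eta_+(\omega,\cdot)$, for a.e.\ $\omega$, with a $\mathcal{G}$-measurable kernel into $[0,\infty]$-valued measures, and likewise $\eta_-$. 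I would then set $\eta:=\eta_+-\eta_-$ on the measurable event where both are locally finite, obtaining a $\mathcal{G}$-measurable pre-kernel with $\eta(B)=\mathbb{E}[\xi(B)\mid\mathcal{G}]$ a.s.\ wherever the right-hand side is defined, by linearity on $\xi(B)=\xi_+(B)-\xi_-(B)$.

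The clean part is the integrable regime, where Theorem~\ref{thm-extension} applies directly to $\eta(U):=\mathbb{E}[\xi(U)\mid\mathcal{G}]$. If $\mathbb{E}[|\xi|]$ is locally finite then $|\xi|(B)\in L^1$ for every $B\in\hat{\mathcal{S}}$, so each $\xi(U)$ is integrable and $\eta(U)$ is a genuine real-valued variable. I would verify the three hypotheses: (i) additivity from conditional linearity; (ii) $\eta(A_n)\to0$ as $A_n\downarrow\emptyset$ by conditional dominated convergence, using $|\xi(A_n)|\le|\xi|(A_1)\in L^1$; and (iii) the total-variation bound from conditional Jensen, $\sum_k|\eta(A_k)|\le\sum_k\mathbb{E}[\,|\xi|(A_k)\mid\mathcal{G}\,]\le\mathbb{E}[\,|\xi|(A)\mid\mathcal{G}\,]<\infty$ a.s., uniformly over the countably many finite disjoint subfamilies of $\mathcal{R}$ contained in $A$. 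Theorem~\ref{thm-extension} then yields an a.s.\ unique $\mathcal{G}$-measurable random signed measure $\eta$ with $\eta(U)=\mathbb{E}[\xi(U)\mid\mathcal{G}]$ for $U\in\mathcal{R}$, and a monotone-class argument extends this identity to all $B\in\hat{\mathcal{S}}$. The same integrability makes $\eta_+(B)=\mathbb{E}[\xi_+(B)\mid\mathcal{G}]$ and $\eta_-(B)$ a.s.\ finite and locally finite, hence random measures, and since $\eta_+(B)-\eta_-(B)=\mathbb{E}[\xi(B)\mid\mathcal{G}]=\eta(B)$ a.s., uniqueness forces $\eta=\eta_+-\eta_-$, which is a random signed measure by Lemma~\ref{lem-uncited-2}.

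I expect the main obstacle to be the first (non-integrable) assertion, where the conditional expectations $\eta_\pm(U)$ may equal $+\infty$: Theorem~\ref{thm-extension} is stated for real-valued families, so one cannot invoke it directly, and instead must build $\eta_\pm$ as possibly non-locally-finite kernels and define $\eta$ only on the event where $\eta_+-\eta_-$ is a well-defined countably additive set function. The delicate step throughout is passing from the fixed-decomposition a.s.\ identities---which rely on $\mathcal{R}$ being countable---to a single almost sure event on which $\eta_\pm(\omega,\cdot)$ is a bona fide (pre-)measure; this is precisely the work done by the extension machinery, and the integrability hypothesis in the second assertion is what removes the infinite-value pathology and upgrades $\eta$ to a genuine random signed measure.
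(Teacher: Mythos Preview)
Your approach is correct and follows the same overall strategy as the paper: decompose $\xi=\xi_+-\xi_-$ via Lemma~\ref{lem-Jordan} and then handle the positive parts. The paper simply cites Corollary~2.17 in \citet{Kallenberg2} to obtain the kernels $\eta_+,\eta_-$ (and their upgrade to random measures when $\mathbb{E}[|\xi|]$ is locally finite), and then sets $\eta=\eta_+-\eta_-$; you instead reconstruct that argument from scratch, fixing a countable ring via Lemma~\ref{lem-countable-ring-existence} and verifying countable additivity through conditional monotone convergence. This is more work but yields the same object. Your additional direct verification of Theorem~\ref{thm-extension} for $\eta(U)=\mathbb{E}[\xi(U)\mid\mathcal{G}]$ in the integrable case is a nice self-contained alternative that the paper does not bother with, since the identity $\eta=\eta_+-\eta_-$ already gives the random signed measure via Lemma~\ref{lem-uncited-2}. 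One small remark: in step~(ii) your conditional dominated convergence gives $L^1$ convergence of $\eta(A_n)$, hence convergence in probability; this suffices because, as the paper notes after Theorem~\ref{thm-extension}, condition~(ii) with convergence in probability is equivalent to the a.s.\ version. You also correctly flag the genuine delicacy in the non-integrable assertion (where $\eta_\pm$ may take the value $+\infty$ and one must restrict to the event where $\eta_+-\eta_-$ makes sense); the paper's one-line proof leaves this implicit in the cited Kallenberg result.
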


We now focus on random signed measures $\xi$ with the additional property that for any $B_1,...,B_n$, $n\in\mathbb{N}$, disjoint sets in $\hat{\mathcal{S}}$ the random variables $\xi(B_1),...,\xi(B_n)$ are jointly independent. We call this class of random measures \textit{completely random signed measures} (CRSMs) or equivalently \textit{random signed measures with independent increments}. We will now provide an almost surely unique representation for CRSMs, which extends the celebrated Kingman's representation in \citet{Kingman} for completely random measures (CRMs). One of the results we use to obtain such representation is a correction and extension of a result in \citet{Daley}. Indeed, the following example shows that Proposition 9.1.III (v) in \citet{Daley} is incorrect.
\begin{example}\label{counterexample}
	Consider the following purely atomic measure $\mu\in\mathcal{M}_S$. For simplicity we consider $S=\mathbb{R}$. In particular, let $\mu(\{s_0\})=a$ for some $s_0\in \mathbb{R}$ and some $a>0$, let  $\mu(\{s_n\})=\frac{1}{n^2}$ where $s_n=s_0+\frac{1}{n}$, for every $n\in\mathbb{N}$, and let $\mu$ be equal to 0 otherwise (namely $\mu(\mathbb{R}\setminus\bigcup_{n=0}^\infty\{s_n\})=0$). Consider the set $(a,b)$ where $b>a$. Let $\kappa_n=\mu(\{s_n\})$. It is possible to see that $\sum_{n=0}^\infty\delta_{(s_n,\kappa_n)}([s,s+1]\times(a,b))=0$. Now, consider a dissection system $(A_{nj})$ of measurable subsets of $[s,s+1]$ with the $A_{nj}$ containing $s$ being open. Then, we have that $\sum_{j}\delta_{\mu(A_{nj})}(a,b)=1$ for every $n\in\mathbb{N}$. Therefore, 
	\begin{equation*}
		\sum_{n=0}^\infty\delta_{(s_n,\kappa_n)}([s,s+1]\times(a,b))\neq\lim\limits_{n\to\infty} \sum_{j}\delta_{\mu(A_{nj})}(a,b),
	\end{equation*}
	which shows that equation $(9.1.9b)$ in Proposition 9.1.III (v) in \citet{Daley} does not hold. Many other similar counterexamples can be found.
\end{example}
The next result is a correction of Proposition 9.1.III (v) in \citet{Daley} and it might also be of interest on its own. For any connected interval $B\in  \mathcal{B} (\mathbb{R}\setminus\{0\})$ with extremes $a$ and $b$ and for any $0<\varepsilon<\frac{b-a}{2}$, we denote by $B_{\varepsilon}$ the open interval $(a+\varepsilon,b-\varepsilon)$ and by $B^{\varepsilon}$ the open interval $(a-\varepsilon,b+\varepsilon)$. Similarly we define $B_\varepsilon$ and $B^\varepsilon$ for any set $B\in  \mathcal{B} (\mathbb{R}\setminus\{0\})$.

\begin{lemma}\label{lem-correspondence-pos}
	There exists a one-to-one both ways measurable correspondence between a purely atomic measure $\mu\in\mathcal{M}_S$ and a counting measure on $S\times(0,\infty)$, $N_\mu$ say, given by $N_\mu(A\times B)=\sum_{i}\delta_{(s_i,\kappa_i)}(A\times B)$, where $\{(s_i,\kappa_i)_{i\in\mathbb{N}}\}$ are the atoms of $\mu$ with their respective values, thus $\kappa_i=\mu(\{s_i\})$, and satisfying
	\begin{equation}\label{D0}
		\int_{A\times(0,\infty)} sN_{\mu}(ds\times dx)<\infty,
	\end{equation}
	for every $A\in\hat{\mathcal{S}}$; the correspondence is the following: for every $A\in\hat{\mathcal{S}}$ and $B\in \mathcal{B} ((0,\infty))$ such that $0\notin\bar{B}$,
	\begin{equation}\label{D1}
		\mu(A)=	\int_{A\times(0,\infty)} sN_{\mu}(ds\times dx)
	\end{equation}
	and
	\begin{equation}\label{D2}
		N_{\mu}(A\times B)=\lim\limits_{\varepsilon\to 0}\lim\limits_{n\to\infty}\sum_{j}\delta_{\mu(A_{nj})}(B_{\varepsilon}),\quad\text{for $B$ open,}
	\end{equation}
	\begin{equation}\label{D3}
		N_{\mu}(A\times B)=\lim\limits_{\varepsilon\to 0}\lim\limits_{n\to\infty}\sum_{j}\delta_{\mu(A_{nj})}(B^{\varepsilon}),\quad\text{for $B$ closed,}
	\end{equation}
	where $(A_{nj})$ is a dissection system of measurable subsets of $A$.
\end{lemma}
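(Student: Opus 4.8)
The plan is to treat the bijection and the two recovery formulas separately, with the formulas \eqref{D2}--\eqref{D3} carrying essentially all the difficulty; these are exactly where Proposition 9.1.III~(v) of \citet{Daley} breaks down, as Example \ref{counterexample} shows, and the $\varepsilon$-truncation is designed precisely to repair that failure.

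First I would set up the correspondence and dispose of \eqref{D0} and \eqref{D1}. Since $\mu\in\mathcal{M}_S$ is purely atomic, it has at most countably many atoms $(s_i,\kappa_i)$ with $\kappa_i=\mu(\{s_i\})>0$ and $\mu=\sum_i\kappa_i\delta_{s_i}$. Setting $N_\mu=\sum_i\delta_{(s_i,\kappa_i)}$, local finiteness of $\mu$ gives, for $A\in\hat{\mathcal{S}}$, that the number of atoms in $A$ with mass $\ge c$ is at most $\mu(A)/c<\infty$ for every $c>0$; this is precisely the statement that $N_\mu$ is locally finite on $S\times(0,\infty)$ and that \eqref{D0} holds, the integral there being $\sum_{s_i\in A}\kappa_i=\mu(A)$, which is also \eqref{D1}. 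Since \eqref{D1} recovers $\mu$ from $N_\mu$, the map $\mu\mapsto N_\mu$ is injective; its range is the set of $N$ with $N(\{s\}\times(0,\infty))\le1$ satisfying \eqref{D0}, with inverse $N\mapsto\mu_N(\cdot)=\int_{(\cdot)\times(0,\infty)}x\,N(ds\times dx)$, which gives the bijection. Measurability both ways is then automatic: \eqref{D1} exhibits $N\mapsto\mu_N(A)$ as an integration map (measurable on the counting-measure space), while \eqref{D2}--\eqref{D3}, once proved, exhibit $\mu\mapsto N_\mu(A\times B)$ as a pointwise limit of the measurable maps $\mu\mapsto\sum_j\delta_{\mu(A_{nj})}(\cdot)$; since the relevant $\sigma$-algebras are generated by such evaluation maps, bimeasurability follows.

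It remains to prove \eqref{D2}, the argument for \eqref{D3} being identical with $B^\varepsilon$ in place of $B_\varepsilon$. By decomposing an open $B$ into its components it suffices to take $B=(a,b)$ with $0<a<b$, and I would fix a dissecting system $(A_{nj})$ of $A$, each $\mathcal{A}_n=\{A_{nj}\}_j$ a finite partition with $\mathcal{A}_{n+1}$ refining $\mathcal{A}_n$ and the cells separating the points of $A$. Write $q_n(\varepsilon)=\#\{j:\mu(A_{nj})\in(a+\varepsilon,b-\varepsilon)\}$, let $p^-(\varepsilon)$ be the number of atoms $s_i\in A$ with $\kappa_i\in(a+\varepsilon,b-\varepsilon)$, and $p^+(\varepsilon)$ the number with $\kappa_i\in[a+\varepsilon,b-\varepsilon]$; both are finite. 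The goal is the sandwich $p^-(\varepsilon)\le\liminf_n q_n(\varepsilon)\le\limsup_n q_n(\varepsilon)\le p^+(\varepsilon)$, since letting $\varepsilon\downarrow0$ through the cocountably many values for which no $\kappa_i$ equals $a+\varepsilon$ or $b-\varepsilon$ gives $p^-(\varepsilon)=p^+(\varepsilon)$, with the common value increasing to $N_\mu(A\times(a,b))$, which yields the iterated limit in \eqref{D2}. The lower bound is the easy half: the $p^-(\varepsilon)$ atoms with $\kappa_i\in(a+\varepsilon,b-\varepsilon)$ are separated into distinct cells for large $n$, and since the cells containing a fixed $s_i$ decrease to $\{s_i\}$, continuity from above shows that the mass of the cell containing $s_i$ decreases to $\kappa_i$, so each of these cells eventually has mass in $(a+\varepsilon,b-\varepsilon)$.

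The upper bound is the crux and the main obstacle. Fix $c=a+\varepsilon$ and call a cell \emph{heavy} if its $\mu$-mass exceeds $c$, and \emph{spurious} if in addition it contains no atom of mass $\ge c$. I would first show that the collection $\mathcal{D}$ of all spurious cells (over all levels) is \emph{finite}. If it were infinite, then, viewing $\mathcal{D}$ as an infinite subset of the tree of cells ordered by inclusion, the infinite Ramsey theorem applied to the comparability relation produces either an infinite antichain or an infinite chain in $\mathcal{D}$. An infinite antichain consists of pairwise disjoint cells each of mass $>c$, forcing $\mu(A)=\infty$; an infinite chain $C_1\supsetneq C_2\supsetneq\cdots$ has $\mu(\bigcap_k C_k)=\lim_k\mu(C_k)\ge c>0$, so $\bigcap_k C_k=\{s^*\}$ is a single point with $\mu(\{s^*\})\ge c$, i.e.\ a heavy atom lying in every $C_k$, contradicting spuriousness. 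Hence $\mathcal{D}$ is finite, so spurious cells occur only up to some finite level. Beyond that level, and beyond the level at which the finitely many atoms of mass $\ge c$ are separated, the cells of mass $>c$ are exactly the cells around those atoms, one apiece, each with mass converging to the corresponding $\kappa_i$; only those with $\kappa_i\in(a+\varepsilon,b-\varepsilon)$, together with at most the finitely many boundary atoms with $\kappa_i\in\{a+\varepsilon,b-\varepsilon\}$, can land in $(a+\varepsilon,b-\varepsilon)$, giving $\limsup_n q_n(\varepsilon)\le p^+(\varepsilon)$. This closes the sandwich and hence \eqref{D2}; \eqref{D3} then follows after replacing shrinkage by expansion, using that $\bigcap_\varepsilon(a-\varepsilon,b+\varepsilon)=[a,b]$.
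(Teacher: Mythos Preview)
Your proof is correct and follows the same two–step outline as the paper's: a lower bound (the atoms with $\kappa_i\in(a+\varepsilon,b-\varepsilon)$ are eventually separated and counted) and an upper bound (cells with mass in $(a+\varepsilon,b-\varepsilon)$ but no large atom eventually disappear). The paper disposes of the upper bound very tersely, arguing informally that the finitely many ``concentrations'' of small atoms with combined mass exceeding $a$ must sit around an atom of value $a$ and hence eventually drop below $a+\varepsilon$; your infinite–Ramsey argument on the collection of spurious cells turns this heuristic into a rigorous proof, and is in fact more careful than the paper's own treatment. A slightly lighter variant of your argument, with the same content: apply K\"onig's lemma to the tree of $\mu'$-heavy cells, where $\mu'$ is $\mu$ with the finitely many atoms of mass $\ge c$ removed; this tree has at most $\mu'(A)/c$ nodes per level, so if it were infinite it would contain an infinite branch, whose intersection is a singleton of $\mu'$-mass $\ge c$, impossible.

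One cosmetic point: your restriction to the cocountably many $\varepsilon$ with no $\kappa_i\in\{a+\varepsilon,b-\varepsilon\}$ is unnecessary. Since the finitely many atoms with $\kappa_i\in(a,b)$ actually lie in $(a+\varepsilon^*,b-\varepsilon^*)$ for some $\varepsilon^*>0$, one has $p^-(\varepsilon)=p^+(\varepsilon)=N_\mu(A\times(a,b))$ for \emph{all} $\varepsilon<\varepsilon^*$, so the inner limit $\lim_n q_n(\varepsilon)$ exists for every small $\varepsilon$ and already equals the target; this is also how the paper phrases the conclusion.
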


In the next result we extend Lemma \ref{lem-correspondence-pos} to the random signed measure case.

\begin{lemma}\label{lem-correspondence}
	There exists a one-to-one almost sure correspondence between a purely atomic random signed measures $\xi$ and a marked signed point process on $\mathcal{S}$, $N_\xi$ say, satisfying
	\begin{equation}\label{eq-0}
		\int_{A\times \mathbb{R}\setminus\{0\}} |s|N_{\xi}(ds\times dx)<\infty\quad\textnormal{a.s.}
	\end{equation}
	for every $A\in\hat{\mathcal{S}}$; the correspondence is the following: almost surely we have that for every $A\in\hat{\mathcal{S}}$ and every $B\in \mathcal{B} (\mathbb{R}\setminus\{0\})$ such that $0\notin\bar{B}$
	\begin{equation}\label{eq-1}
		\xi(A)=	\int_{A\times \mathbb{R}\setminus\{0\}} sN_{\xi}(ds\times dx)
	\end{equation}
	and
	\begin{equation}\label{eq-2}
		N_{\xi}(A\times B)=\lim\limits_{\varepsilon\to 0}\lim\limits_{n\to\infty}\sum_{j}\delta_{\xi(A_{nj})}(B_{\varepsilon}),\quad\text{for $B$ open,}
	\end{equation}
	\begin{equation}
		N_{\xi}(A\times B)=\lim\limits_{\varepsilon\to 0}\lim\limits_{n\to\infty}\sum_{j}\delta_{\xi(A_{nj})}(B^{\varepsilon}),\quad\text{for $B$ closed,}
	\end{equation}
	where $(A_{nj})$ is a dissection system of measurable subsets of $A$. Moreover,  almost surely we have that for every $A\in\hat{\mathcal{S}}$ 
	\begin{equation*}
		\xi_+(A)=	\int_{A\times (0,\infty)} sN_{\xi}(ds\times dx),\quad\text{and}\quad 	\xi_-(A)=	-\int_{A\times (-\infty,0)} sN_{\xi}(ds\times dx)
	\end{equation*}
\end{lemma}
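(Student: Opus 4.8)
The plan is to reduce the random signed case to the deterministic positive case of Lemma~\ref{lem-correspondence-pos} by passing through the Jordan decomposition, and then to promote the resulting pointwise construction to a measurable one. First I would invoke Lemma~\ref{lem-Jordan} to write $\xi=\xi_+-\xi_-$ with $\xi_+,\xi_-$ random measures. Since $\xi$ is purely atomic, the representation in Corollary~\ref{co-atomic decomposition} (equivalently Lemma~\ref{lem-deterministic-representation}) shows that $\xi_+$ and $\xi_-$ are purely atomic random measures, and the defining suprema in Lemma~\ref{lem-Jordan} give $\xi_+(\omega,\{s\})=\max(\xi(\omega,\{s\}),0)$ and $\xi_-(\omega,\{s\})=\max(-\xi(\omega,\{s\}),0)$, so their atoms sit at disjoint locations. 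This disjointness of atom locations is exactly what will guarantee that the object we build is \emph{simple}.

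Next, for each $\omega$ in the full-probability event on which $\xi(\omega,\cdot)$ is a purely atomic signed measure, I would apply Lemma~\ref{lem-correspondence-pos} separately to $\xi_+(\omega,\cdot)$ and $\xi_-(\omega,\cdot)$, obtaining counting measures $N_{\xi_+}(\omega)$ and $N_{\xi_-}(\omega)$ on $S\times(0,\infty)$. I then define $N_\xi(\omega)$ on $S\times(\mathbb{R}\setminus\{0\})$ as the superposition of $N_{\xi_+}(\omega)$, carried on the positive marks, with the pushforward of $N_{\xi_-}(\omega)$ under the reflection $(s,\kappa)\mapsto(s,-\kappa)$, carried on the negative marks. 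Because $\xi_+$ and $\xi_-$ have no common atom, $N_\xi(\omega)$ places at most one point above each $s\in S$, so $|N_\xi|(\{s\}\times(\mathbb{R}\setminus\{0\}))\le 1$ and $N_\xi$ is a marked signed point process in the sense defined above. Its measurability in $\omega$ follows by composition: $\omega\mapsto\xi_\pm(\omega)$ are measurable into $(\mathcal{M}_S,\mathcal{B}_{\mathcal{M}_S})$ by Lemma~\ref{lem-inclusion}, the deterministic assignment $\mu\mapsto N_\mu$ is measurable by the ``one-to-one both ways measurable'' part of Lemma~\ref{lem-correspondence-pos}, and reflection and superposition are measurable operations on point processes.

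It then remains to read off the stated identities. Crucially, Lemma~\ref{lem-correspondence-pos} already supplies, for each fixed purely atomic measure, the relations~\eqref{D0}--\eqref{D3} \emph{simultaneously for all} $A$ and $B$; hence evaluating them at $\xi_\pm(\omega,\cdot)$ on the full-probability event yields all the displayed formulas simultaneously for every $A\in\hat{\mathcal{S}}$ and every admissible $B$, with no further null sets to control. The integrability~\eqref{eq-0} is~\eqref{D0} applied to $\xi_+$ and $\xi_-$ and summed; the reconstruction~\eqref{eq-1} is $\xi(A)=\xi_+(A)-\xi_-(A)$ rewritten through the deterministic identity~\eqref{D1} for $\xi_+$ and $\xi_-$ together with the reflection, which merges the two integrals over $S\times(0,\infty)$ into the single integral over $S\times(\mathbb{R}\setminus\{0\})$ appearing in~\eqref{eq-1}; and the limit formula~\eqref{eq-2} and its closed-interval analogue split according to whether $B$ lies in $(0,\infty)$ or in $(-\infty,0)$ and reduce to~\eqref{D2}--\eqref{D3} for $\xi_+$ or for the reflected $\xi_-$. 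The final Jordan-decomposition identities are immediate, since by construction the restriction of $N_\xi$ to positive marks is $N_{\xi_+}$ and its restriction to negative marks is the reflection of $N_{\xi_-}$. For the converse and the almost sure uniqueness, I would split an arbitrary such marked signed point process into its positive- and negative-mark parts, invert the deterministic correspondence to recover $\xi_+$ and $\xi_-$, and set $\xi=\xi_+-\xi_-$ using Lemma~\ref{lem-uncited-2}; almost sure bijectivity is then inherited from the deterministic bijection.

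The step I expect to be the main obstacle is the measurability of $\omega\mapsto N_\xi(\omega)$, that is, confirming that the superposition-plus-reflection recipe genuinely produces a random marked point process rather than a merely pointwise-defined family. This hinges on extracting honest joint measurability from the measurable correspondence asserted in Lemma~\ref{lem-correspondence-pos}; everything else is a pointwise transcription of that deterministic lemma, with the purely atomic hypothesis ensuring it is applicable on a set of full probability.
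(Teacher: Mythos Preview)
Your overall architecture matches the paper's: Jordan-decompose, apply the positive correspondence to each piece, reflect $\xi_-$ onto negative marks, and superpose. The measurability concern you flag as the main obstacle is not the real difficulty; the paper handles it in one line via Corollary~\ref{co-9.1.VII}, and your composition argument is equally adequate.

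The genuine gap is in your verification of~\eqref{eq-2}. You assert that the limit formula ``split[s] according to whether $B$ lies in $(0,\infty)$ or in $(-\infty,0)$ and reduce[s] to~\eqref{D2}--\eqref{D3} for $\xi_+$ or for the reflected $\xi_-$''. But the right-hand side of~\eqref{eq-2} involves $\delta_{\xi(A_{nj})}(B_\varepsilon)$, with $\xi(A_{nj})=\xi_+(A_{nj})-\xi_-(A_{nj})$, and a single dissection cell $A_{nj}$ will in general contain atoms of \emph{both} $\xi_+$ and $\xi_-$ (their atom \emph{locations} are disjoint, but cells are sets, not points). Hence even for $B=(a,b)\subset(0,\infty)$ one does not have $\delta_{\xi(A_{nj})}(B_\varepsilon)=\delta_{\xi_+(A_{nj})}(B_\varepsilon)$ termwise: a positive atom of value in $(a,b)$ may be dragged below $a+\varepsilon$ by nearby negative atoms in the same cell, and a positive atom of value $\ge b$ may be dragged down into $(a+\varepsilon,b-\varepsilon)$. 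The paper devotes most of its proof to exactly this point (the identity labelled~\eqref{delta} and the paragraphs that follow), arguing via $|\xi|(A)<\infty$ that there are only finitely many cells where such interference can matter and that, as the dissection refines, the interfering contribution in each such cell tends to $0$. This is the same style of argument that Lemma~\ref{lem-correspondence-pos} was introduced to supply as a correction to Daley--Vere-Jones, and it is not a formality: without it, your ``reduction'' to~\eqref{D2}--\eqref{D3} does not go through.
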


\begin{remark}
	Notice that for $\varepsilon$ large we have that $\lim\limits_{n\to\infty}\sum_{j}\delta_{\mu(A_{nj})}(B_{\varepsilon})$ might not exist. On the other hand, from the proof of Lemma \ref{lem-correspondence} we have that 
	\begin{equation*}
		\limsup\limits_{\varepsilon\to 0}\limsup\limits_{n\to\infty}\sum_{j}\delta_{\xi(\omega,A_{nj})}(B_{\varepsilon})=\liminf\limits_{\varepsilon\to 0}\liminf\limits_{n\to\infty}\sum_{j}\delta_{\xi(\omega,A_{nj})}(B_{\varepsilon}),
	\end{equation*}
	because for any $\varepsilon<\varepsilon^*$ and for every $n>n_\varepsilon$, for some $n_\varepsilon\in\mathbb{N}$, we have $\sum_{i}\delta_{(s_i,\kappa_i)}(A\times B)=\sum_{j}\delta_{\xi(\omega,A_{nj})}(B_{\varepsilon})$. Thus, one could equivalently work with $\limsup\limits_{\varepsilon\to 0}\limsup\limits_{n\to\infty}$ and $\liminf\limits_{\varepsilon\to 0}\liminf\limits_{n\to\infty}$ at the same time for defining $N_{\xi}$ and in the proof of Theorem \ref{thm-representation}.
\end{remark}

We are now ready to present the mentioned almost sure representation for CRSMs.
\begin{theorem}\label{thm-representation}
	A random signed measure $\xi$ on $S$ has independent increments iff a.s.
	\begin{equation}\label{eq-representation}
		\xi (B)=\sum_{j=1}^p X_j\delta_{s_j}(B)+\alpha(B)+\int_{\mathbb{R}\setminus\{0\}}x\Psi(B\times dx),\quad B\in\hat{\mathcal{S}}
	\end{equation}
	where $p\in\mathbb{N}\cup\{\infty\}$, $X_1,...,X_p$ are independent real-valued random variables, $s_1,...s_p$ are fixed points in $S$, $\alpha$ is given by the difference of two deterministic atomless measures in $\mathcal{M}_S$ and $\Psi$ is a Poisson process on $S\times (\mathbb{R}\setminus\{0\})$ independent of $X_1,...,X_p$. The intensity measure of $\Psi$, which we denote by $F$, satisfies 
	\begin{equation}\label{eq-intensity-poisson}
		\int_{\mathbb{R}\setminus\{0\}}1\wedge|x| F(B\times dx)<\infty,\quad\textnormal{for every $B\in\hat{\mathcal{S}}$.}
	\end{equation}
	and $F(\{s\}\times (\mathbb{R}\setminus\{0\}))=0$ for every $s\in S$. Moreover, the representation (\ref{eq-representation}) is almost surely unique.
\end{theorem}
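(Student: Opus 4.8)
The plan is to prove the two implications separately, handling the fixed‑atom part, the diffuse part and the (non‑fixed) atomic part of $\xi$ one at a time, and to reduce the hard direction to Kingman's representation for completely random measures by passing through the Jordan decomposition of Lemma~\ref{lem-Jordan} and the correspondence of Lemma~\ref{lem-correspondence}.

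\textbf{Sufficiency.} Assume $\xi$ has the stated form. I would first check that the right‑hand side genuinely defines a random signed measure: the condition $\int 1\wedge|x|\,F(B\times dx)<\infty$ forces $\int_{\mathbb{R}\setminus\{0\}}|x|\,\Psi(B\times dx)<\infty$ a.s.\ for each $B\in\hat{\mathcal{S}}$, since $F(B\times\{|x|>1\})<\infty$ makes the large jumps a.s.\ finite in number, while the small jumps converge absolutely because their expected absolute mass is at most $\int 1\wedge|x|\,F$. Hence $|\xi|(B)<\infty$ a.s.\ and $\xi\in\mathsf{M}_S$ by Lemma~\ref{lem-kernel}. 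Independence of increments is then immediate: for disjoint $B_1,\dots,B_n\in\hat{\mathcal{S}}$ each $s_j$ lies in at most one $B_i$, so the terms $\sum_j X_j\delta_{s_j}(B_i)$ involve disjoint subfamilies of the independent variables $\{X_j\}$; the integrals $\int x\,\Psi(B_i\times dx)$ are functions of the restrictions of $\Psi$ to the disjoint product sets $B_i\times(\mathbb{R}\setminus\{0\})$, hence independent by the defining property of a Poisson process; and $\alpha$ is deterministic. Since $\Psi\perp\{X_j\}$, the three groups are mutually independent, and so are $\xi(B_1),\dots,\xi(B_n)$.

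\textbf{Necessity.} Conversely, suppose $\xi$ has independent increments. By Corollary~\ref{co-atom} it has at most countably many fixed atoms $s_1,s_2,\dots$; setting $X_j:=\xi(\{s_j\})$ and using that disjoint singletons give independent increments shows the $X_j$ are independent, and $\xi^o:=\xi-\sum_j X_j\delta_{s_j}$ is a random signed measure with independent increments, no fixed atoms, and independent of $(X_j)_j$. I would then take the Jordan decomposition $\xi^o=\xi^o_+-\xi^o_-$ of Lemma~\ref{lem-Jordan}. The key observation is that $\xi^o_\pm(B)=\sup_{E\subset B}(\pm\xi^o(E))$ depends only on the restriction of $\xi^o$ to $B$, while independent increments make the restrictions of $\xi^o$ to disjoint sets mutually independent; hence both $\xi^o_+$ and $\xi^o_-$ are completely random measures with no fixed atoms. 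Kingman's theorem \citep{Kingman} applied to each gives $\xi^o_\pm=\alpha_\pm+\int_{(0,\infty)}x\,\Psi_\pm(\cdot\times dx)$ with $\alpha_\pm$ deterministic atomless measures in $\mathcal{M}_S$ and $\Psi_\pm$ Poisson on $S\times(0,\infty)$; setting $\alpha:=\alpha_+-\alpha_-$ produces the deterministic diffuse term, a difference of two atomless measures.

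It remains to assemble $\Psi_+$ and the reflected $\Psi_-$ into a single Poisson process $\Psi$ on $S\times(\mathbb{R}\setminus\{0\})$, and this is where the main difficulty lies. By Lemma~\ref{lem-correspondence} the non‑fixed atoms of $\xi^o$ are encoded in the marked signed point process $N_\xi$, which is simple and has no fixed atoms, with $N_\xi(A\times B)$ determined by $\xi^o\!\restriction_A$ through the dissection limits~(\ref{eq-2}). Independent increments of $\xi^o$ directly give independence of counts over sets disjoint in the $S$‑coordinate; the nontrivial point is independence of $N_\xi(A\times B)$ and $N_\xi(A\times B')$ for the \emph{same} $A$ and disjoint mark sets $B,B'$ bounded away from $0$ — equivalently, the independence of the positive and negative jump structures $\Psi_+,\Psi_-$. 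I expect to obtain this from the infinite divisibility of each $\xi(A)$ (dissect $A$ and use independent increments): along a refining dissection each cell carries at most one jump in the limit, the events ``cell value in $B$'' and ``cell value in $B'$'' are mutually exclusive per cell and independent across cells, so the usual thinning/Poisson‑limit argument forces the limiting counts to be independent Poisson variables. Granting this, $N_\xi$ is a simple point process with independent increments over all disjoint subsets of $S\times(\mathbb{R}\setminus\{0\})$ and no fixed atoms, hence a Poisson process $\Psi$; the integrability condition (\ref{eq-0}) transcribes into $\int 1\wedge|x|\,F(B\times dx)<\infty$, and the absence of fixed atoms gives $F(\{s\}\times(\mathbb{R}\setminus\{0\}))=0$. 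Finally, a.s.\ uniqueness follows from the uniqueness of the fixed atoms, of the Jordan decomposition (Lemma~\ref{lem-Jordan}), and of the correspondence $\xi^o\leftrightarrow N_\xi$ (Lemma~\ref{lem-correspondence}).
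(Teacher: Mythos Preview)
Your overall strategy is sound and lands on the same crux as the paper, but the route you take to get there differs. The paper does not go through the Jordan decomposition plus Kingman; instead it first establishes the \emph{distributional} structure by invoking Pr\'ekopa to get infinite divisibility of each $\xi(A)$, then Rajput--Rosi\'nski for the L\'evy--Khintchine form, and an argument \`a la Hellmund to kill the Gaussian part and obtain $\int 1\wedge|x|\,F(B\times dx)<\infty$. Only after this does it pass to the almost sure picture via Lemma~\ref{lem-correspondence}. Your route---show each Jordan piece $\xi^o_\pm$ is a CRM (correct: $\xi^o_\pm(B)$ is a measurable function of $\xi^o|_B$, and restrictions to disjoint sets are independent), apply Kingman to each---is more elementary and avoids the external references, which is a real gain.

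However, note that once you commit to proving that $N_\xi$ has independent increments over disjoint mark sets, the Kingman step becomes redundant: a simple point process with independent increments and no fixed atoms is Poisson outright (Theorem~3.17 in \citet{Kallenberg2}), so you could skip Kingman and argue directly on $N_\xi$. This is in fact what the paper does.

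The one place where your sketch is genuinely incomplete is exactly the step you flag: independence of $N_\xi(A\times B)$ and $N_\xi(A\times B')$ for the same $A$ and disjoint mark sets $B,B'$. Your heuristic (``mutually exclusive per cell, independent across cells, thinning/Poisson limit'') is the right intuition, but it is not a proof. The paper makes this precise by computing the joint probability generating function of the dissection approximants, writing $Z_{\varepsilon,nj}=\mathbf{1}\{\xi(A_{nj})\in B_\varepsilon\}$ and $Y_{\varepsilon,nj}=\mathbf{1}\{\xi(A_{nj})\in B'_\varepsilon\}$, and showing via the elementary inequality $x\le -\log(1-x)\le x/(1-x)$ together with $\max_j r_{\varepsilon,nj}\to 0$ (from Lemma~9.3.II in \citet{Daley} applied to $|\xi|$) that the cross term $\sum_j p_{\varepsilon,nj}q_{\varepsilon,nj}$ vanishes in the limit, forcing the joint pgf to factor. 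You should expect to have to carry out a computation of this kind; the ``usual thinning argument'' does not quite apply off the shelf because you are not thinning a single Poisson process but rather extracting two counts from the same dissection.
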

\begin{remark}
	Representation (\ref{eq-representation}) can be rewritten as almost surely
	\begin{equation*}
		\xi (B)=\sum_{j=1}^p X_j\delta_{s_j}(B)+\alpha(B)+\sum_{(V_i,U_i)\in \Psi}U_i\delta_{V_i}(B),\quad B\in\hat{\mathcal{S}}
	\end{equation*}
	and the characteristic function of $\xi$ is given by
	\begin{equation*}
		\mathbb{E}\left[e^{it\xi(B)}\right]=\exp\bigg(it\alpha(B) +\int_{\mathbb{R}\setminus\{0\}}e^{itx} -1F(B\times dx)\bigg)\prod_{j=1}^{p}\mathbb{E}\left[e^{it X_j\delta_{s_j}(B)}\right].
	\end{equation*}
	where $t\in\mathbb{R}$ and $B\in\hat{\mathcal{S}}$. Since there is a one-to-one correspondence between the pair $(F,\alpha)$ and the distribution of $\xi$ up to a fixed component, we call $(F,\alpha)$ the \textit{characteristic pair} of $\xi$. The measure $F$ is usually called the \textit{intensity} or \textit{L\'{e}vy measure} of $\Psi$, or of $\xi$.
\end{remark}

We already discussed that our definition is the natural one because it is the extension of real-valued random set functions, see Theorem \ref{thm-extension}, and because it can be equivalently defined using a kernel-like formulation, see Lemma \ref{lem-kernel}. Now, Theorem \ref{thm-representation} is an additional confirmation that our definition of random signed measure is truly the most natural one. Indeed, even though a random signed measure is defined on $\hat{\mathcal{S}}$ we have shown that there exists a unique Poisson point process on the whole $S\times (\mathbb{R}\setminus\{0\})$ satisfying the natural property (\ref{eq-intensity-poisson}) for its intensity measure. Thus, Theorem \ref{thm-representation} is a proper generalization of Kingman's representation of CRMs.

\begin{corollary}\label{co-two-CRSM}
	For any atomless CRSM $\xi$ on $S$ we have that $\xi_+$ and $\xi_-$ are independent atomless CRMs. Moreover, almost surely
	\begin{align*}
		\xi_+(B)&=\alpha_+(B)+\int_{0}^{\infty}x\Psi(B\times dx),\quad B\in\hat{\mathcal{S}},\\
		\xi_-(B)&=\alpha_-(B)-\int_{-\infty}^{0}x\Psi(B\times dx),\quad B\in\hat{\mathcal{S}}.
	\end{align*}
\end{corollary}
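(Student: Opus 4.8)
The plan is to read everything off the representation in Theorem~\ref{thm-representation}. Since $\xi$ is atomless there is no fixed component, so almost surely
\begin{equation*}
\xi(B) = \alpha(B) + \int_{\mathbb{R}\setminus\{0\}} x\,\Psi(B\times dx), \quad B\in\hat{\mathcal{S}},
\end{equation*}
with $\alpha = \alpha_+ - \alpha_-$ the difference of two deterministic atomless measures and $\Psi$ a Poisson process on $S\times(\mathbb{R}\setminus\{0\})$ whose intensity $F$ satisfies \eqref{eq-intensity-poisson} and $F(\{s\}\times(\mathbb{R}\setminus\{0\}))=0$. First I would define the two candidate measures
\begin{equation*}
\zeta_+(B) := \alpha_+(B) + \int_{(0,\infty)} x\,\Psi(B\times dx), \qquad \zeta_-(B) := \alpha_-(B) - \int_{(-\infty,0)} x\,\Psi(B\times dx),
\end{equation*}
both of which are nonnegative (since $\alpha_\pm\geq 0$, $\Psi\geq 0$, and the integrands carry the appropriate sign) and satisfy $\zeta_+ - \zeta_- = \xi$ by the displayed representation.

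Next I would check that $\zeta_+$ and $\zeta_-$ are genuine locally finite random measures. Local finiteness is the only quantitative point: by Campbell's formula $\mathbb{E}[\int_{(0,1]} x\,\Psi(B\times dx)] = \int_{(0,1]} x\,F(B\times dx) \leq \int 1\wedge|x|\,F(B\times dx) < \infty$ by \eqref{eq-intensity-poisson}, so the small-jump part is a.s.\ finite, while $\Psi(B\times(1,\infty))$ has finite mean and hence contributes finitely many atoms; the same argument applies to $\zeta_-$. Measurability and the pre-kernel property are inherited from $\Psi$. I would then argue that the pair $\zeta_+,\zeta_-$ is exactly the Jordan decomposition $\xi_+,\xi_-$ of $\xi$: the two measures are mutually singular because $\alpha_+\perp\alpha_-$ (the Jordan decomposition of $\alpha$), while the positive and negative atoms of the $\Psi$-part sit at distinct locations with a single sign each --- by the marked-point-process structure of Lemma~\ref{lem-correspondence}, there is at most one mark per site, a consequence of $F(\{s\}\times(\mathbb{R}\setminus\{0\}))=0$. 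Hence $\zeta_+$ and $\zeta_-$ are carried by disjoint sets, and the uniqueness of the minimal positive decomposition in Lemma~\ref{lem-Jordan} forces $\zeta_\pm = \xi_\pm$, which simultaneously establishes the two displayed formulas.

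It then remains to prove the structural statements. Each $\xi_\pm$ is atomless because $\alpha_\pm$ is diffuse and $F(\{s\}\times(\mathbb{R}\setminus\{0\}))=0$ rules out fixed atoms of the $\Psi$-part. Each is a CRM: for disjoint $B_1,\dots,B_n\in\hat{\mathcal{S}}$ the restrictions of $\Psi$ to the disjoint sets $B_i\times(0,\infty)$ are independent by the independence property of Poisson processes over disjoint regions, and $\alpha_+$ is deterministic, so $\xi_+(B_1),\dots,\xi_+(B_n)$ are independent; likewise for $\xi_-$.

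Finally, the independence of $\xi_+$ and $\xi_-$ is the genuinely new assertion, and I expect it to be the main obstacle, since for an arbitrary signed measure the two parts of a Jordan decomposition are far from independent. Here the point is that $\xi_+$ is a deterministic functional of the restriction $\Psi|_{S\times(0,\infty)}$ (together with the constant $\alpha_+$), while $\xi_-$ is a functional of $\Psi|_{S\times(-\infty,0)}$ (together with $\alpha_-$). Because $(0,\infty)$ and $(-\infty,0)$ are disjoint sets in the mark space, these two restrictions of the single Poisson process $\Psi$ are independent, whence $\xi_+$ and $\xi_-$ are independent. The care needed is to phrase this restriction-independence as independence of the generated $\sigma$-algebras and to transfer it through the measurable maps defining $\zeta_\pm$, so as to obtain joint independence at the level of the whole random measures rather than only for fixed increments.
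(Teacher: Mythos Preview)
Your argument is correct and is exactly the approach the paper intends: the paper's own proof is the single line ``It follows from Theorem~\ref{thm-representation},'' and what you have written is precisely the unpacking of that line. The only difference is the direction of the identification: in the proof of Theorem~\ref{thm-representation} the quantities $\alpha_\pm$ are \emph{defined} by $\alpha_\pm(B):=\xi_\pm(B)\mp\int x\,N_\xi(B\times dx)$ over the appropriate half-line (with $N_\xi$ then identified with $\Psi$), so the displayed formulas hold by construction; you instead start from the representation, build the candidates $\zeta_\pm$, and recover $\zeta_\pm=\xi_\pm$ via mutual singularity and the uniqueness in Lemma~\ref{lem-Jordan}. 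Both routes are the same identification read in opposite directions, and your independence argument (restrictions of $\Psi$ to the disjoint mark half-lines) is exactly the mechanism the paper has in mind.
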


\begin{corollary}\label{co-two-CRSM-2}
	Let $\eta_1$ and $\eta_2$ be two independent atomless CRMs with driving Poisson processes $\Psi_1$ and $\Psi_2$, respectively. Then, $\Psi_1$ and $\Psi_2$ are independent. 
	
	Moreover, let $\xi$ be given by $\xi(B)=\eta_1(B)-\eta_2(B)$, $B\in\hat{\mathcal{S}}$. Then, $\xi$ is an atomless CRSM with Jordan decomposition $\eta_1$ and $\eta_2$. In particular, if $F_1$ and $F_2$ are the L\'{e}vy measures of $\eta_1$ and $\eta_2$, respectively, then the L\'{e}vy measure $F$ of $\xi$ is given by $F_1$ on $S\times(0,\infty)$ and by $F_2$ on $S\times(-\infty,0)$, precisely for every $B\in\hat{\mathcal{S}}$ and $A\in\mathcal{B}(\mathbb{R}\setminus\{0\})$ we have $F(B\times A)=F_1(B\times (A\cap(0,\infty)))+F_2(B\times -(A\cap(-\infty,0)))$.
\end{corollary}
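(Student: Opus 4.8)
The plan is to handle the three assertions in turn, leaning on Kingman's representation in the form of Lemma~\ref{lem-correspondence-pos}, the difference construction of Lemma~\ref{lem-uncited-2}, and the representation and Jordan results just established (Theorem~\ref{thm-representation} and Corollary~\ref{co-two-CRSM}). First I would prove the independence of $\Psi_1$ and $\Psi_2$. The key observation is that each driving Poisson process is a measurable functional of its CRM: since $\eta_i$ is atomless, its deterministic diffuse part $\alpha_i$ can be subtracted off (a measurable operation by Lemma~\ref{lem-measurable-vector-space}), leaving the purely atomic part $\eta_i-\alpha_i$, and Lemma~\ref{lem-correspondence-pos} exhibits $\Psi_i = N_{\eta_i-\alpha_i}$ as a measurable function of that atomic part. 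Hence $\sigma(\Psi_i)\subseteq\sigma(\eta_i)$, and since $\eta_1$ and $\eta_2$ are independent the $\sigma$-algebras $\sigma(\eta_1),\sigma(\eta_2)$ are independent, so $\Psi_1\perp\Psi_2$.

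Next I would show that $\xi=\eta_1-\eta_2$ is an atomless CRSM. Existence and a.s.\ uniqueness of $\xi$ as a random signed measure with $\xi(B)\stackrel{a.s.}{=}\eta_1(B)-\eta_2(B)$ is exactly Lemma~\ref{lem-uncited-2}. For independent increments, fix disjoint $B_1,\dots,B_n\in\hat{\mathcal{S}}$. Complete randomness of each $\eta_i$ makes $\eta_i(B_1),\dots,\eta_i(B_n)$ jointly independent, and $\eta_1\perp\eta_2$ makes the vectors $(\eta_1(B_k))_k$ and $(\eta_2(B_k))_k$ independent; together the $2n$ variables $\{\eta_1(B_k),\eta_2(B_k)\}_{k=1}^n$ are jointly independent. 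Since $\xi(B_k)=\eta_1(B_k)-\eta_2(B_k)$ depends only on the disjoint block $(\eta_1(B_k),\eta_2(B_k))$, the increments $\xi(B_1),\dots,\xi(B_n)$ are jointly independent, so $\xi$ has independent increments; as neither $\eta_i$ has a fixed atom, neither does $\xi$, and $\xi$ is atomless.

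I would then identify the L\'evy measure and the Jordan decomposition together. Writing $\eta_i(B)=\alpha_i(B)+\int_0^\infty x\,\Psi_i(B\times dx)$, I assemble the candidate process $\Psi(B\times A):=\Psi_1(B\times(A\cap(0,\infty)))+\Psi_2(B\times -(A\cap(-\infty,0)))$ on $S\times(\mathbb{R}\setminus\{0\})$. Because $\Psi_1$ and the reflection of $\Psi_2$ are independent (by the first step) Poisson processes supported on the disjoint mark regions $S\times(0,\infty)$ and $S\times(-\infty,0)$, their superposition $\Psi$ is Poisson with intensity $F(B\times A)=F_1(B\times(A\cap(0,\infty)))+F_2(B\times -(A\cap(-\infty,0)))$, which satisfies \eqref{eq-intensity-poisson} since each $F_i$ does. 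A change of variables gives $\xi(B)=(\alpha_1-\alpha_2)(B)+\int_{\mathbb{R}\setminus\{0\}}x\,\Psi(B\times dx)$, and the a.s.\ uniqueness in Theorem~\ref{thm-representation} identifies $\Psi$ as the driving process of $\xi$ and $F$ as its L\'evy measure. Corollary~\ref{co-two-CRSM} then yields $\xi_+(B)=\alpha_+(B)+\int_0^\infty x\,\Psi_1(B\times dx)$ and $\xi_-(B)=\alpha_-(B)+\int_0^\infty x\,\Psi_2(B\times dx)$, so the atomic parts of $\xi_+,\xi_-$ coincide exactly with those of $\eta_1,\eta_2$.

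The main obstacle is this final identification, and it splits into an atomic and a diffuse issue. For the atomic part I must rule out cancellation, i.e.\ show $\eta_1$ and $\eta_2$ a.s.\ share no atom in $S$; this follows because atomlessness forces $F_i(\{s\}\times(0,\infty))=0$ for every $s$, so the $S$-projections of $\Psi_1,\Psi_2$ are diffuse and two independent diffuse Poisson processes a.s.\ place no point at a common location, whence the Hahn set carrying the positive variation of $\xi$ is precisely the support of $\eta_1$. For the diffuse part, the Hahn--Jordan positive part $\alpha_+$ of $\alpha_1-\alpha_2$ equals $\alpha_1$ only when $\alpha_1\perp\alpha_2$; I would therefore either read the mutual singularity of the deterministic diffuse components from the hypotheses (it is automatic when $\alpha_1=\alpha_2=0$) or state it as the condition under which the clean identification $\xi_+=\eta_1,\ \xi_-=\eta_2$ holds, the atomic identification being unconditional and the diffuse part otherwise reducing to the deterministic Hahn decomposition of $\alpha_1-\alpha_2$.
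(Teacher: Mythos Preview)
Your approach matches the paper's: independence of $\Psi_1,\Psi_2$ via the fact that $\Psi_i$ is a measurable function of $\eta_i$ through Lemma~\ref{lem-correspondence-pos}, and distinctness of atoms from $F_i(\{s\}\times(0,\infty))=0$ (the paper phrases this as $\mathbb{E}[\Psi_i(\{s\}\times(0,\infty))]=0$ and cites Theorem~3.1.9 in \cite{Kallenberg2}). Your construction of the superposition $\Psi$ and appeal to the uniqueness in Theorem~\ref{thm-representation} to identify $F$ is a more explicit version of what the paper leaves implicit.

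More importantly, you have correctly identified a point the paper's proof glosses over. The paper's one-line argument for the Jordan decomposition addresses only the atomic parts, arguing that the atoms of $\int_0^\infty x\,\Psi_1(\cdot\times dx)$ and $\int_0^\infty x\,\Psi_2(\cdot\times dx)$ are almost surely distinct. It says nothing about the deterministic diffuse components $\alpha_1,\alpha_2$, and as you observe, the Hahn--Jordan positive part of $\alpha_1-\alpha_2$ equals $\alpha_1$ only when $\alpha_1\perp\alpha_2$; in general the statement ``$\eta_1,\eta_2$ is the Jordan decomposition of $\xi$'' is not correct without this singularity hypothesis (take $\alpha_1=\alpha_2\neq 0$). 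Your resolution---the atomic identification is unconditional, while the diffuse identification reduces to the deterministic Hahn decomposition of $\alpha_1-\alpha_2$---is the right one. The L\'evy-measure assertion is unaffected, since $F$ depends only on the ordinary components, and in the paper's applications the deterministic parts vanish, so the issue never bites downstream.
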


Corollaries \ref{co-two-CRSM} and \ref{co-two-CRSM-2} enable the extension of numerous and important results for CRMs to CRSMs. In particular, consider a CRSM $\xi$ and the related CRMs $\xi_+$ and $\xi_-$. In some cases, one can obtain a result for $\xi$ by applying the results for the CRMs $\xi_+$ and $\xi_-$ and then combine them using the independence of $\xi_+$ and $\xi_-$. 

The following result provides a representation for marked signed point processes and is an extension of Theorem 3.18 in \citet{Kallenberg2}. Recall that $(T,\mathcal{T},\hat{\mathcal{T}})$ is a localized Borel space. We say that a random signed measure $\xi$ on the product space $S\times T$ has independent $S$-increments, if for any disjoint sets $B_1, . . . , B_n\in\hat{\mathcal{S}}$, $n\in\mathbb{N}$, the random measures $\xi(B_1 \times\cdot),...,\xi(B_n\times\cdot)$ on $T$ are independent. Let $T^{\Delta}=T\cup\{\Delta\}$, where the point $\Delta\notin T$ is arbitrary. For $\mathcal{M}_T\setminus\{0\}$ we mean the space of measures $\mathcal{M}_T$ without the zero measure.
\begin{proposition}\label{pro-marked-signed}
	A $T$-marked signed point process $\xi$ on $S$ has independent $S$-increments iff a.s.
	\begin{equation*}
		\xi=\eta_+-\eta_-+\sum_{k}(\delta_{s_k}\otimes\delta_{\tau_k})
	\end{equation*}
	for some independent Poisson processes $\eta_+$ and $\eta_-$ on $S\times T$ with $E[\eta_+(\{s\}\times T)]\equiv 0$ and $E[\eta_-(\{s\}\times T)]\equiv 0$, some distinct points $s_1,s_2,...\in S$, and some independent random elements $\tau_1,\tau_2,...\in T^{\Delta}$ with distributions $\nu_1,\nu_2,...$, such that the measures
	\begin{equation*}
		\rho_1=\mathbb{E}[\eta_+]+\sum_{k}(\delta_{s_k}\otimes\nu_{k}),\quad\text{and}\quad 	\rho_2=\mathbb{E}[\eta_-]
	\end{equation*}
	on $S\times T$ are locally finite. Here $\rho_1$ and $\rho_2$ determine uniquely $\mathcal{L}(\xi)$, and any measures $\rho_1$ and $\rho_2$ on $\mathcal{M}_{S\times T}$ with $\sup_{s}\rho_1(\{s\}\times T)\leq 1$ and  $\sup_{s}\rho_2(\{s\}\times T)=0$ may occur. Further, $\eta_+$ and $\eta_-$ are the Jordan decomposition of the non-fixed atomic component of $\xi$.
\end{proposition}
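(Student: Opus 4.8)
The plan is to reduce the signed statement to its positive counterpart, Theorem 3.18 in \citet{Kallenberg2}, by enlarging the mark space so as to record the sign of each atom, and then to recover the signed representation by splitting along the sign. First I would introduce the enlarged mark space $\tilde{T}=T\times\{+1,-1\}$, which is again a localized Borel space, and associate to $\xi$ the \emph{positive} $\tilde{T}$-marked point process $\tilde{\xi}$ on $S$ obtained by sending an atom of $\xi$ at $(s,t)$ with sign $\varepsilon$ to a unit atom of $\tilde{\xi}$ at $(s,(t,\varepsilon))$. By Lemma \ref{lem-inclusion} and Lemma \ref{lem-Jordan} the maps $\xi(B\times\cdot)\mapsto\tilde{\xi}(B\times\cdot)$ and its inverse are measurable, so that $\tilde{\xi}(\{s\}\times\tilde{T})=|\xi|(\{s\}\times T)\le 1$ and $\xi$ has independent $S$-increments iff $\tilde{\xi}$ does. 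Thus $\tilde{\xi}$ is a genuine $\tilde{T}$-marked point process with independent $S$-increments, and it contains exactly the same information as $\xi$.

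Next I would apply Theorem 3.18 in \citet{Kallenberg2} to $\tilde{\xi}$, obtaining a.s. a decomposition $\tilde{\xi}=\tilde{\eta}+\sum_k\delta_{s_k}\otimes\delta_{\tilde{\tau}_k}$ into a Poisson process $\tilde{\eta}$ on $S\times\tilde{T}$ with $\mathbb{E}[\tilde{\eta}(\{s\}\times\tilde{T})]\equiv 0$ and an independent family of fixed-location marks $\tilde{\tau}_k\in\tilde{T}^{\Delta}$, where $\mathbb{E}[\tilde{\eta}]+\sum_k\delta_{s_k}\otimes\tilde{\nu}_k$ is locally finite and determines the law. The genuinely new (signed) step is to split $\tilde{\eta}$ along the two disjoint sign fibres: restricting $\tilde{\eta}$ to $S\times(T\times\{+1\})$ and to $S\times(T\times\{-1\})$ and forgetting the sign coordinate yields point processes $\eta_+$ and $\eta_-$ on $S\times T$ with $\mathbb{E}[\eta_+(\{s\}\times T)]\equiv\mathbb{E}[\eta_-(\{s\}\times T)]\equiv 0$. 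Since the restrictions of a single Poisson process to disjoint sets are independent Poisson processes, $\eta_+$ and $\eta_-$ are independent; this is exactly the mechanism behind Corollary \ref{co-two-CRSM}, where the sign fibres $(0,\infty)$ and $(-\infty,0)$ play the role of $\{+1\}$ and $\{-1\}$. Forgetting the sign in the fixed part returns $\sum_k\delta_{s_k}\otimes\delta_{\tau_k}$, and by construction the Jordan decomposition of the non-fixed atomic component of $\xi$ is precisely $(\eta_+,\eta_-)$.

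For the law-determination and achievability claims I would push the intensity $\mathbb{E}[\tilde{\eta}]+\sum_k\delta_{s_k}\otimes\tilde{\nu}_k$ through the sign splitting to produce $\rho_1$ and $\rho_2$: since $\mathbb{E}[\tilde{\eta}]$ fixes the two independent Poisson laws and the $\tilde{\nu}_k$ fix the independent fixed marks, the pair $(\rho_1,\rho_2)$ determines all finite-dimensional distributions of $\xi$, hence $\mathcal{L}(\xi)$ by Proposition \ref{pro-fdd=d}, while the admissibility constraint $\sup_s\rho(\{s\}\times\tilde{T})\le 1$ of Theorem 3.18 transfers to $\sup_s\rho_1(\{s\}\times T)\le1$. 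The converse is routine: given the stated representation, $\eta_+$ and $\eta_-$ have independent increments, the independent marks $\tau_k$ at distinct fixed points $s_k$ contribute independently across disjoint $B_1,\dots,B_n\in\hat{\mathcal{S}}$, and the assumed mutual independence of the three components yields that $\xi(B_1\times\cdot),\dots,\xi(B_n\times\cdot)$ are independent, i.e. $\xi$ has independent $S$-increments.

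The main obstacle I anticipate is the careful bookkeeping of the fixed atomic component and its interaction with the sign splitting — in particular reconciling the signs of the fixed atoms with the asymmetric roles assigned to $\rho_1$ and $\rho_2$ in the statement — together with verifying that the enlarged-mark-space correspondence is bimeasurable and preserves independent $S$-increments (using Lemma \ref{lem-inclusion}, Lemma \ref{lem-Jordan}, and the countability of fixed atoms from Corollary \ref{co-atom}). Once the reduction to $\tilde{\xi}$ is in place, the independence of $\eta_+$ and $\eta_-$ drops out cleanly from the disjoint-support restriction property of Poisson processes, so the heart of the argument is the reduction itself rather than any delicate estimate.
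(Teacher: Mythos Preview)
Your approach is correct in spirit and genuinely different from the paper's. The paper works \emph{from the inside out}: it strips the fixed atoms, then for each bounded $B\in\hat{\mathcal{S}}\otimes\hat{\mathcal{T}}$ applies Corollary \ref{co-two-CRSM} to the scalar CRSM $\xi'((\cdot\times T)\cap B)$ on $S$ to show its Jordan parts are independent Poisson, and finally patches the local pieces into global $\eta_+,\eta_-$ via Theorem 2.15 and Corollary 3.9 in \citet{Kallenberg2}. Your route is the opposite: encode the sign as an extra coordinate, apply Theorem 3.18 once to the positive $\tilde{T}$-marked process $\tilde{\xi}$, and read off $\eta_\pm$ as disjoint-fibre restrictions of a single Poisson process. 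Your reduction is cleaner and buys the independence of $\eta_+$ and $\eta_-$ for free; the paper's approach, by contrast, makes the Jordan-decomposition structure explicit at every scale and does not rely on Theorem 3.18 being already available.

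One genuine gap: your claim that ``$\xi(B\times\cdot)\mapsto\tilde{\xi}(B\times\cdot)$'' is a well-defined measurable map is false as stated. Here $\tilde{\xi}(B\times(C\times\{+1\}))=\xi_+(B\times C)$, where $\xi_+$ is the Jordan decomposition of $\xi$ on $S\times T$; this is \emph{not} determined by the signed measure $\xi(B\times\cdot)$ on $T$ alone, because two atoms at $(s_1,t)$ and $(s_2,t)$ with opposite signs and $s_1,s_2\in B$ cancel in $\xi(B\times\{t\})$ but not in $\xi_+(B\times\{t\})$. The correct statement is that $\tilde{\xi}(B\times\cdot)$ is a measurable function of the \emph{restriction} $\xi|_{(B\times T)}$, i.e., of the family $\{\xi(E_1\times E_2):E_1\subset B\}$ indexed over a countable generating ring. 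You then need to argue that these restrictions are independent across disjoint $B_1,\ldots,B_n$, which follows from the definition of independent $S$-increments applied to arbitrary disjoint subfamilies of the $B_i$'s; this is exactly the $\sigma$-algebra computation the paper carries out in the proof of Theorem \ref{thm-ultra-representation}. With that correction your argument goes through. Your own caveat about ``reconciling the signs of the fixed atoms with the asymmetric roles assigned to $\rho_1$ and $\rho_2$'' is well placed and deserves to be spelled out rather than flagged.
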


\begin{theorem}\label{thm-ultra-representation}
	A random signed measure $\xi$ on $S\times T$ has independent $S$-increments iff a.s.
	\begin{equation}\label{representation-ultra}
		\xi=\alpha+\sum_{k}(\delta_{s_k}\otimes\beta_k)+\int\int (\delta_s\otimes\mu)\eta_+(ds\,d\mu)-\int\int (\delta_s\otimes\mu)\eta_-(ds\,d\mu)
	\end{equation}
	for some $\alpha\in\mathsf{M}_{S\times T}$ with $\alpha_+(\{s\}\times T)=\alpha_-(\{s\}\times T)=0$, some $\eta_+$ and $\eta_-$ independent Poisson processes on $S\times(\mathcal{M}_T\setminus\{0\})$ with $\mathbb{E}[\eta_+(\{s\}\times (\mathcal{M}_T\setminus\{0\}))]=\mathbb{E}[\eta_-(\{s\}\times (\mathcal{M}_T\setminus\{0\}))]=0$, some distinct points $s_1,s_2,...\in S$, and some independent random signed measures $\beta_1,\beta_2$ on $\hat{\mathcal{T}}$ with distributions $\nu_1, \nu_2,... $, such that
	\begin{equation*}
		\int\mu(C)\wedge 1 \rho(B\times d\mu)<\infty,\quad B\in\hat{\mathcal{S}},\, C\in\hat{\mathcal{T}},
	\end{equation*}
	where $\rho=\rho_1+\rho_2$ and $\rho_1=\mathbb{E}[\eta_+]+\sum_{k}(\delta_{s_k}\otimes\nu_{k})$ and $\rho_2=\mathbb{E}[\eta_-]$. The triplet $(\alpha,\rho_1,\rho_2)$ is unique and determines $\mathcal{L}(\xi)$, and any $\alpha$, $\rho_1$ and $\rho_2$ with the stated properties may occur. Further, $\eta_+$ and $\eta_-$ are the Jordan decomposition of the non-fixed atomic component of $\xi$.
\end{theorem}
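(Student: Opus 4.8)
The plan is to prove both implications by combining the Jordan decomposition of Lemma~\ref{lem-Jordan} with the positive, random-measure counterpart of Theorem~3.18 in \citet{Kallenberg2}, and with the marked-signed-point-process representation of Proposition~\ref{pro-marked-signed}. The guiding idea is that $\xi_+$ and $\xi_-$ should each fall under Kallenberg's theorem, so that the theorem follows by taking differences; the hard part is to show that the two resulting Poisson processes are \emph{independent}, exactly as in the scalar situation of Corollary~\ref{co-two-CRSM}.

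I would first dispatch the converse direction. Given $\xi$ of the form (\ref{representation-ultra}), Theorem~3.18 in \citet{Kallenberg2}, applied to the positive clusters, shows that $\alpha_++\sum_k\delta_{s_k}\otimes\beta_{k,+}+\int\int\delta_s\otimes\mu\,\eta_+(ds\,d\mu)$, and the corresponding expression built from $\alpha_-,\beta_{k,-},\eta_-$, are random measures on $S\times T$ with independent $S$-increments; their difference is an a.s.\ unique random signed measure by Lemma~\ref{lem-uncited-2}, and the integrability of $\rho=\rho_1+\rho_2$ guarantees local finiteness. Independence of the $S$-increments of $\xi$ then follows because, for disjoint $B_1,\dots,B_n\in\hat{\mathcal S}$, the restrictions of $\eta_+$ and of $\eta_-$ to the disjoint strips $B_i\times(\mathcal M_T\setminus\{0\})$ are mutually independent, the fixed clusters sit at distinct deterministic points carrying independent marks $\beta_k$, and $\eta_+\perp\eta_-$.

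For the forward direction I would first observe that each of $\xi_+$, $\xi_-$, and $|\xi|$ inherits independent $S$-increments: for fixed $B$ the measure $\xi_+(B\times\cdot)$ on $T$ is a measurable functional of the restriction of $\xi$ to $B\times T$, and independence of these restrictions over disjoint $B_i$ passes to the images, arguing as for Proposition~\ref{pro-fdd=d}. By Corollary~\ref{co-atom} there are at most countably many fixed fiber-atom locations $s_k$; peeling them off contributes $\sum_k\delta_{s_k}\otimes\beta_k$ with $\beta_k=\xi(\{s_k\}\times\cdot)$ a random signed measure on $T$, and the deterministic, $S$-diffuse remainder is the fixed part $\alpha$ with $\alpha_\pm(\{s\}\times T)=0$. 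Applying Theorem~3.18 in \citet{Kallenberg2} to the non-fixed parts of the positive random measures $\xi_+$ and $\xi_-$ yields Poisson processes $\eta_+$ and $\eta_-$ on $S\times(\mathcal M_T\setminus\{0\})$ realizing those non-fixed parts; recombining and setting $\rho_1=\mathbb E[\eta_+]+\sum_k\delta_{s_k}\otimes\nu_k$ and $\rho_2=\mathbb E[\eta_-]$ gives (\ref{representation-ultra}), and by construction $\eta_+,\eta_-$ are the Jordan decomposition of the non-fixed atomic component of $\xi$.

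The main obstacle is the independence of $\eta_+$ and $\eta_-$. I would establish it by lifting $\xi$ to a simple marked point process on $S$ whose mark at an occupied fiber is the signed fiber measure $\xi(\{s\}\times\cdot)\in\mathsf M_T\setminus\{0\}$, and then invoking Proposition~\ref{pro-marked-signed} with mark space $\mathsf M_T$, which delivers the two driving Poisson processes already independent. The delicate step, absent in the point-process case where atoms are automatically $\pm1$, is to show that for a random signed measure with independent $S$-increments the non-fixed fibers are a.s.\ sign-definite; otherwise a genuinely signed non-fixed fiber would force $\eta_+$ and $\eta_-$ to share an $S$-coordinate, contradicting the independence of two atomless Poisson processes. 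This sign-definiteness, together with the conclusion that the positive- and negative-mark parts are restrictions of a single Poisson cluster process to disjoint regions of the mark space and hence independent, is handled exactly as in Theorem~\ref{thm-representation} and Corollary~\ref{co-two-CRSM-2}. Finally, uniqueness of $(\alpha,\rho_1,\rho_2)$ follows from the a.s.\ uniqueness of the Jordan decomposition (Lemma~\ref{lem-Jordan}), the uniqueness in Theorem~3.18 applied to $\xi_\pm$, and the fact that finite-dimensional distributions determine $\mathcal L(\xi)$ (Proposition~\ref{pro-fdd=d}); realizability of any admissible triplet is immediate from the converse.
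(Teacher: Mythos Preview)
Your overall architecture matches the paper's: peel off the fixed fibers, show that the Jordan pieces $\xi'_+,\xi'_-$ inherit independent $S$-increments, apply Kallenberg's positive result (it is Theorem~3.19, not~3.18) to each, and then use Proposition~\ref{pro-marked-signed} to manufacture the independence of the two driving Poisson processes. Where your proposal diverges is in the mechanism for that last step, and there the argument has a genuine gap.

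You propose to lift $\xi$ to a marked point process on $S$ with marks in $\mathsf M_T$ and apply Proposition~\ref{pro-marked-signed}. But that proposition concerns \emph{signed} point processes with marks in a fixed Borel mark space: the points carry $\pm1$ multiplicities and the marks are unsigned. To fit your lifted process into that framework you must first assign a sign to each occupied fiber, and for that you need exactly the sign-definiteness you flag as ``delicate''. Your justification of sign-definiteness---that a truly signed fiber would force $\eta_+$ and $\eta_-$ to share an $S$-coordinate, contradicting the independence of two atomless Poisson processes---is circular, since the independence of $\eta_+$ and $\eta_-$ is precisely what you are trying to establish. The analogy with Theorem~\ref{thm-representation} does not rescue this: there the marks lie in $\mathbb R\setminus\{0\}$, which carries a canonical sign, so no fiber-level sign-definiteness is needed. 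The paper breaks the circularity differently. After obtaining the encoding Poisson processes $\gamma_+,\gamma_-$ on $S\times(\mathcal M_T\setminus\{0\})$ from Kallenberg~3.19, it shows by a direct computation---using that $\gamma_\pm$ are measurable functionals of $\xi'_\pm$ restricted to disjoint $S$-strips---that the \emph{difference} $\gamma_+-\gamma_-$ is itself an $(\mathcal M_T\setminus\{0\})$-marked signed point process with independent $S$-increments. Proposition~\ref{pro-marked-signed} is then applied to $\gamma_+-\gamma_-$ with the positive mark space $\mathcal M_T\setminus\{0\}$, yielding independent Poisson processes $\eta_+,\eta_-$ as its Jordan decomposition; one then identifies $\eta_\pm=\gamma_\pm$ and deduces $\xi'_+\perp\xi'_-$ a posteriori. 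The missing ingredient in your sketch is exactly this verification that $\gamma_+-\gamma_-$ has independent $S$-increments \emph{before} any appeal to independence of the pieces.
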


\begin{lemma}\label{lem-Borel-disjoint-union}
	Let $-\mathcal{M}_T=\{-\mu:\mu\in\mathcal{M}_T\}$ and let 
	\begin{equation*}
		\mathfrak{B}_{ \mathcal{M}_T\cup-\mathcal{M}_T\setminus\{0\} }=\{A_1\cup A_2:A_1\in \mathcal{B}_{\mathcal{M}_T\setminus\{0\}},A_2 \in\mathcal{B}_{-\mathcal{M}_T\setminus\{0\}}\}.
	\end{equation*}
	Then, $( \mathcal{M}_T\cup-\mathcal{M}_T\setminus\{0\},\mathfrak{B}_{ \mathcal{M}_T\cup-\mathcal{M}_T\setminus\{0\} },\hat{\mathfrak{B}}_{ \mathcal{M}_T\cup-\mathcal{M}_T\setminus\{0\} } )$ is a localized Borel space with $\hat{\mathfrak{B}}_{ \mathcal{M}_T\cup-\mathcal{M}_T\setminus\{0\} }=\{A_1\cup A_2:A_1\in \hat{\mathcal{B}}_{\mathcal{M}_T\setminus\{0\}},A_2 \in\hat{\mathcal{B}}_{-\mathcal{M}_T\setminus\{0\}}\}$.
\end{lemma}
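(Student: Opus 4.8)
The plan is to recognise the space in question as a disjoint union of two localized Borel spaces and to show that such a disjoint union inherits the structure componentwise. First I would record the elementary but essential observation that $\mathcal{M}_T\cap(-\mathcal{M}_T)=\{0\}$, since a measure that is simultaneously nonnegative and nonpositive must vanish. Hence excising the zero measure genuinely splits the underlying set,
\[
\mathcal{M}_T\cup-\mathcal{M}_T\setminus\{0\}=(\mathcal{M}_T\setminus\{0\})\,\sqcup\,(-\mathcal{M}_T\setminus\{0\}),
\]
into disjoint ``positive'' and ``negative'' halves. Since the negation map $\mu\mapsto-\mu$ satisfies $\pi_B(-\mu)=-\pi_B(\mu)$ for every $B\in\hat{\mathcal{T}}$, it is a measurable isomorphism from $(\mathcal{M}_T\setminus\{0\},\mathcal{B}_{\mathcal{M}_T\setminus\{0\}},\hat{\mathcal{B}}_{\mathcal{M}_T\setminus\{0\}})$ onto $(-\mathcal{M}_T\setminus\{0\},\mathcal{B}_{-\mathcal{M}_T\setminus\{0\}},\hat{\mathcal{B}}_{-\mathcal{M}_T\setminus\{0\}})$, so the latter is a localized Borel space whenever the former is; and the former is a localized Borel space in the framework of \citet{Kallenberg2}, being a measurable subset of the space of measures. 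Thus everything reduces to the claim that the disjoint union of two localized Borel spaces, equipped with the $\sigma$-algebra $\mathfrak{B}_{\mathcal{M}_T\cup-\mathcal{M}_T\setminus\{0\}}$ and ring $\hat{\mathfrak{B}}_{\mathcal{M}_T\cup-\mathcal{M}_T\setminus\{0\}}$ as stated, is again a localized Borel space.

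For the Borel-space part, I would embed each summand as a Borel subset of a Polish space, take the topological disjoint union of the two Polish spaces (which is again Polish), and observe that the disjoint union of the two Borel subsets is a Borel subset of it. Under this identification the Borel $\sigma$-algebra of the union is exactly $\{A_1\cup A_2: A_1\in\mathcal{B}_{\mathcal{M}_T\setminus\{0\}},\,A_2\in\mathcal{B}_{-\mathcal{M}_T\setminus\{0\}}\}=\mathfrak{B}_{\mathcal{M}_T\cup-\mathcal{M}_T\setminus\{0\}}$; it is immediate from the disjointness of the two halves that this family is closed under complementation and countable unions, and that any $C$ in it is recovered from its two components $C\cap(\mathcal{M}_T\setminus\{0\})$ and $C\cap(-\mathcal{M}_T\setminus\{0\})$, so it is genuinely a $\sigma$-algebra. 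This certifies that the underlying measurable space is a Borel space.

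It then remains to verify that $\hat{\mathfrak{B}}_{\mathcal{M}_T\cup-\mathcal{M}_T\setminus\{0\}}$ is a localizing ring, and here every check decouples across the two disjoint halves. Closure under finite unions and set differences follows by grouping the positive and negative components separately and invoking the ring property of $\hat{\mathcal{B}}_{\mathcal{M}_T\setminus\{0\}}$ and $\hat{\mathcal{B}}_{-\mathcal{M}_T\setminus\{0\}}$, while the inclusion $\hat{\mathfrak{B}}\subseteq\mathfrak{B}$ is immediate. For stability under intersection with $\mathfrak{B}$, writing $B=A_1\cup A_2\in\hat{\mathfrak{B}}$ and $C=C_1\cup C_2\in\mathfrak{B}$, disjointness gives $B\cap C=(A_1\cap C_1)\cup(A_2\cap C_2)$, and each factor lies in the corresponding localizing ring by the localizing property there. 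Finally, choosing exhausting sequences $S^{(1)}_n\uparrow\mathcal{M}_T\setminus\{0\}$ and $S^{(2)}_n\uparrow-\mathcal{M}_T\setminus\{0\}$ in the respective rings and setting $S_n=S^{(1)}_n\cup S^{(2)}_n$, one checks $S_n\uparrow\mathcal{M}_T\cup-\mathcal{M}_T\setminus\{0\}$ in $\hat{\mathfrak{B}}$ and, again componentwise, that $\hat{\mathfrak{B}}=\bigcup_n(\mathfrak{B}\cap S_n)$. I expect no serious obstacle: the only step requiring genuine care is the passage to the topological disjoint union needed to certify the Borel-space property, whereas all the ring verifications are routine once the componentwise decomposition — valid precisely because removing $\{0\}$ renders the two halves disjoint — is in place.
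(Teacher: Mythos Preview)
Your proposal is correct and follows essentially the same approach as the paper, which simply appeals to the fact that the disjoint union of Borel spaces is a Borel space (citing Section 12.B of Kechris). You have supplied the details the paper omits---the disjointness of the two halves after removing $0$, the transfer of the localized Borel structure to $-\mathcal{M}_T\setminus\{0\}$ via the negation map, and the componentwise verification of the localizing-ring axioms---but the core idea is identical.
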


\begin{corollary}\label{co-incited-5}
	The representation $(\ref{representation-ultra})$ can be written as
	\begin{equation*}
		\xi=\alpha+\sum_{k}(\delta_{s_k}\otimes\beta_k)+\int\int (\delta_s\otimes\mu)\eta(ds\,d\mu)
	\end{equation*}
	where $\eta$ is a Poisson process on $S\times(\mathcal{M}_T\cup-\mathcal{M}_T\setminus\{0\})$. Let $F$ the L\'{e}vy measure of $\eta$. Then, the characteristic function of $\xi$ is
	\begin{align*}
		&\mathbb{E}\left[e^{it\xi(B\times C)}\right]\\&=\exp\bigg(it\alpha(B\times C) +\int_{\mathcal{M}_T\cup-\mathcal{M}_T\setminus\{0\}}e^{it\mu(C)} -1F(B\times d\mu)\bigg)\prod_{i=1}^{k}\mathbb{E}\left[e^{it \beta_i(C)\delta_{s_i}(B)}\right],
	\end{align*}
	where $t\in\mathbb{R}$, $B\in\hat{\mathcal{S}}$ and $C\in\hat{\mathcal{T}}$. When $\xi$ has no fixed atoms, then $\xi_+$ and $\xi_-$ are independent and they are given by
	\begin{equation*}
		\xi_+=\alpha_++\int\int (\delta_s\otimes\mu)\eta_+(ds\,d\mu),\quad\text{and}\quad 	
		\xi_-=\alpha_-+\int\int (\delta_s\otimes\mu)\eta_-(ds\,d\mu).
	\end{equation*}
\end{corollary}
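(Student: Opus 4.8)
The plan is to assemble the single Poisson process $\eta$ by superposing $\eta_+$ with a reflected copy of $\eta_-$, read off the characteristic function through Campbell's formula, and then identify the Jordan decomposition via a mutual-singularity argument.

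First I would introduce the reflection map $R\colon(s,\mu)\mapsto(s,-\mu)$, a measurable bijection from $S\times(\mathcal{M}_T\setminus\{0\})$ onto $S\times(-\mathcal{M}_T\setminus\{0\})$, and set $\tilde\eta_-=R_*\eta_-$. By the mapping theorem for Poisson processes, $\tilde\eta_-$ is Poisson on $S\times(-\mathcal{M}_T\setminus\{0\})$ with intensity $R_*\mathbb{E}[\eta_-]$. Lemma~\ref{lem-Borel-disjoint-union} guarantees that $S\times(\mathcal{M}_T\cup-\mathcal{M}_T\setminus\{0\})$ is a localized Borel space which, as a measurable space, is the disjoint union of the two carriers. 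Since $\eta_+$ and $\tilde\eta_-$ are independent and supported on these disjoint carriers, the superposition theorem shows that $\eta:=\eta_++\tilde\eta_-$ is Poisson on $S\times(\mathcal{M}_T\cup-\mathcal{M}_T\setminus\{0\})$, with Lévy measure $F$ equal to $\mathbb{E}[\eta_+]$ on the positive half and to $R_*\mathbb{E}[\eta_-]$ on the negative half. A change of variables under $R$ gives $\int\int(\delta_s\otimes\mu)\,\tilde\eta_-(ds\,d\mu)=-\int\int(\delta_s\otimes\mu)\,\eta_-(ds\,d\mu)$, so the two Poisson integrals in (\ref{representation-ultra}) merge into the single integral against $\eta$, yielding the rewritten representation.

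For the characteristic function I would use that $\alpha$ is deterministic and that $(\beta_k)_k$, $\eta_+$ and $\eta_-$ are mutually independent by Theorem~\ref{thm-ultra-representation}, so the three contributions to $\mathbb{E}[e^{it\xi(B\times C)}]$ factorize. The fixed-atom part contributes $\prod_i\mathbb{E}[e^{it\beta_i(C)\delta_{s_i}(B)}]$, while Campbell's theorem applied to $\eta$ and the integrand $(s,\mu)\mapsto t\,\mu(C)\mathbf{1}_B(s)$ contributes $\exp\big(\int(e^{it\mu(C)}-1)F(B\times d\mu)\big)$; the integrability $\int\mu(C)\wedge1\,\rho(B\times d\mu)<\infty$ justifies convergence of the integral. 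Multiplying by $e^{it\alpha(B\times C)}$ then gives the stated formula.

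Finally, in the no-fixed-atom case I would argue that $\alpha_++\int\int(\delta_s\otimes\mu)\eta_+$ and $\alpha_-+\int\int(\delta_s\otimes\mu)\eta_-$ are mutually singular positive random measures, so that by uniqueness of the Jordan decomposition (Lemma~\ref{lem-Jordan}) they must coincide with $\xi_+$ and $\xi_-$. The diffuse components $\alpha_\pm$ are mutually singular and singular to both purely atomic integrals, so the crux is that the two atomic parts have almost surely disjoint $S$-supports. This is the step I expect to be the main obstacle: since $\mathbb{E}[\eta_-(\{s\}\times(\mathcal{M}_T\setminus\{0\}))]\equiv0$, the intensity of $\eta_-$ is diffuse in the $S$-coordinate, hence conditioning on the countable atom set of $\eta_+$ and invoking the independence of $\eta_+$ and $\eta_-$, the process $\eta_-$ almost surely places no atom above any of those $S$-locations. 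Independence of $\xi_+$ and $\xi_-$ is then immediate, since each is a deterministic functional of one of the independent processes $\eta_+$, $\eta_-$.
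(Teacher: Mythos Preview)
Your proposal is correct and follows essentially the same route as the paper, which dispatches the corollary in one line as an immediate consequence of Theorem~\ref{thm-ultra-representation} and Lemma~\ref{lem-Borel-disjoint-union}. You have simply unpacked what that one line entails: the reflection-and-superposition construction of $\eta$ is exactly how Lemma~\ref{lem-Borel-disjoint-union} is meant to be used, and the Jordan-decomposition and independence claims are already established inside the proof of Theorem~\ref{thm-ultra-representation} (where it is shown that $\alpha_\pm+\int\int(\delta_s\otimes\mu)\eta_\pm$ coincide with $\xi'_\pm$), so your mutual-singularity argument, while sound, is reproving something the paper already records.
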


\begin{corollary}\label{co-ultra-two-CRSM-2}
	Let $\gamma_1$ and $\gamma_2$ be two independent atomless random measures on $S\times T$ with independent $S$-increments and with driving Poisson processes $\Psi_1$ and $\Psi_2$, respectively. Then, $\Psi_1$ and $\Psi_2$ are independent. 
	
	Moreover, let $\xi$ be given by $\xi(B)=\eta_1(B)-\eta_2(B)$, $B\in\hat{\mathcal{S}}\otimes\hat{\mathcal{T}}$. Then, $\xi$ is an atomless random signed measure on $S\times T$ with independent $S$-increments with Jordan decomposition $\gamma_1$ and $\gamma_2$. In particular, if $F_1$ and $F_2$ are the L\'{e}vy measures of $\gamma_1$ and $\gamma_2$, respectively, then the L\'{e}vy measure $F$ of $\xi$ is given by $F_1$ on $S\times(\mathcal{M}_T\setminus\{0\})$ and by $F_2$ on $S\times(-\mathcal{M}_T\setminus\{0\})$, precisely for every $B\in\hat{\mathcal{S}}$ and $A\in\mathfrak{B}_{\mathcal{M}_T\cup-\mathcal{M}_T\setminus\{0\}}$ we have $F(B\times A)=F_1(B\times (A\cap\mathcal{M}_T))+F_2(B\times -(A\cap-\mathcal{M}_T))$.
\end{corollary}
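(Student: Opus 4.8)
The plan is to mirror the one-dimensional argument of Corollary \ref{co-two-CRSM-2}, replacing the half-lines $(0,\infty)$ and $(-\infty,0)$ by the mark spaces $\mathcal{M}_T\setminus\{0\}$ and $-\mathcal{M}_T\setminus\{0\}$ and invoking Theorem \ref{thm-ultra-representation} and Corollary \ref{co-incited-5} in place of Theorem \ref{thm-representation} and Corollary \ref{co-two-CRSM}. First I would prove the independence of $\Psi_1$ and $\Psi_2$. By the representation of atomless random measures with independent $S$-increments, each driving Poisson process $\Psi_i$ is recovered from $\gamma_i$ by a dissection limit, exactly as in the marked analogue of the correspondence in Lemma \ref{lem-correspondence}, and is therefore a measurable functional of $\gamma_i$. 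Since measurable functions of independent random elements are independent, the hypothesis $\gamma_1\perp\gamma_2$ immediately forces $\Psi_1\perp\Psi_2$.

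Next I would check that $\xi=\gamma_1-\gamma_2$ is an atomless random signed measure on $S\times T$ with independent $S$-increments. Because $(S\times T,\mathcal{S}\otimes\mathcal{T},\hat{\mathcal{S}}\otimes\hat{\mathcal{T}})$ is again a localized Borel space, Lemma \ref{lem-uncited-2} guarantees that $\xi=\gamma_1-\gamma_2$ is an almost surely unique random signed measure, and the absence of fixed atoms is inherited from $\gamma_1$ and $\gamma_2$. For the increments, fix disjoint $B_1,\dots,B_n\in\hat{\mathcal{S}}$; the independent $S$-increments of each $\gamma_i$ make $\gamma_i(B_1\times\cdot),\dots,\gamma_i(B_n\times\cdot)$ independent, and combining this with $\gamma_1\perp\gamma_2$ shows that the whole family $\{\gamma_1(B_j\times\cdot),\gamma_2(B_j\times\cdot)\}_{j\le n}$ is mutually independent. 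Since $\xi(B_j\times\cdot)=\gamma_1(B_j\times\cdot)-\gamma_2(B_j\times\cdot)$ depends only on the $j$-th independent block, the random measures $\xi(B_1\times\cdot),\dots,\xi(B_n\times\cdot)$ on $T$ are independent.

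The central step is to show that $(\gamma_1,\gamma_2)$ is the Jordan decomposition of $\xi$, which reduces to proving that $\gamma_1$ and $\gamma_2$ are almost surely mutually singular as measures on $S\times T$. Since both are atomless, their $S$-projected atom sets $D_i=\{s\in S:\gamma_i(\{s\}\times T)\neq 0\}$ are the first-coordinate projections of $\Psi_i$ and carry diffuse $S$-intensity. Conditioning on $\gamma_1$, hence on the countable random set $D_1$, and using $\Psi_1\perp\Psi_2$ from the first step together with the fact that a Poisson process with diffuse $S$-intensity has no fixed atoms, a countable union bound gives $\mathbb{P}(D_1\cap D_2\neq\emptyset)=0$, so $\gamma_1$ and $\gamma_2$ live on disjoint carriers. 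The pathwise uniqueness of the Jordan decomposition in Lemma \ref{lem-Jordan} then yields $\xi_+=\gamma_1$ and $\xi_-=\gamma_2$ almost surely. I expect this singularity argument to be the main obstacle, just as in the positive one-dimensional case.

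Finally I would identify the L\'evy measure. Writing $\xi$ in the form of Corollary \ref{co-incited-5} with driving Poisson process $\eta$ on $S\times(\mathcal{M}_T\cup-\mathcal{M}_T\setminus\{0\})$ and restrictions $\eta_+$, $\eta_-$, that corollary gives $\xi_+=\alpha_++\int\int(\delta_s\otimes\mu)\eta_+(ds\,d\mu)$ and $\xi_-=\alpha_-+\int\int(\delta_s\otimes\mu)\eta_-(ds\,d\mu)$. By the uniqueness of the driving process and the equalities $\xi_+=\gamma_1$, $\xi_-=\gamma_2$ just established, we get $\eta_+=\Psi_1$, while $\eta_-$ corresponds to $\Psi_2$ embedded through the negation map $\mu\mapsto-\mu$. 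Taking intensities and using Lemma \ref{lem-Borel-disjoint-union} to split any $A\in\mathfrak{B}_{\mathcal{M}_T\cup-\mathcal{M}_T\setminus\{0\}}$ as $A=A_1\cup A_2$ with $A_1\in\mathcal{B}_{\mathcal{M}_T\setminus\{0\}}$ and $A_2\in\mathcal{B}_{-\mathcal{M}_T\setminus\{0\}}$, so that $A\cap\mathcal{M}_T=A_1$ and $-(A\cap-\mathcal{M}_T)=-A_2\in\mathcal{B}_{\mathcal{M}_T\setminus\{0\}}$, yields $F(B\times A)=F_1(B\times A_1)+F_2(B\times(-A_2))$, which is precisely the claimed formula.
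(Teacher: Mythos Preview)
Your proposal is correct and follows essentially the same route as the paper, whose proof is the one-liner that the result follows from Theorem \ref{thm-ultra-representation} and Lemma \ref{lem-Borel-disjoint-union} by the same arguments as in Corollary \ref{co-two-CRSM-2}. You have spelled out those arguments in detail---measurability of $\Psi_i$ in $\gamma_i$ for the independence of the driving Poisson processes, almost-sure disjointness of the $S$-atom sets via the diffuse $S$-intensity for the Jordan decomposition, and the disjoint-union split of the mark space via Lemma \ref{lem-Borel-disjoint-union} for the L\'evy measure---but the structure matches the paper exactly.
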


\begin{remark}
	In this paper we use the convention that the L\'{e}vy measure is a measure on $S\times\mathbb{R}\setminus\{0\}$ in Theorem \ref{thm-representation} and on $S\times(\mathcal{M}_T\cup-\mathcal{M}_T\setminus\{0\})$ in Theorem \ref{thm-ultra-representation}. However, it is equivalent to work with the L\'{e}vy measure as a measure on $S\times\mathbb{R}$ and on $S\times(\mathcal{M}_T\cup-\mathcal{M}_T)$ such that it takes value $0$ at $0$, that is $F(S\times\{0\})=0$ (here $0$ stands for both $0\in\mathbb{R}$ and the zero measure $0\in\mathcal{M}_T$).
\end{remark}

\section{Examples}\label{Sec-Examples}
\subsection{A primary example: the Skellam point process}
The Poisson point process is the primary example of a random measure. This is because any simple random measure with independent increments is a Poisson point process and any Poisson point process is a  simple random measure with independent increments (see Theorem 3.17 in \citet{Kallenberg2}). For random signed measures this role is played by the Skellam point process, as shown in Proposition \ref{pro-Skellam} below.

The Skellam distribution was formally introduced in \citet{Irwin} and in \citet{Skellam}. Recently, it has been used for sports data, medicine, image analysis and particularly in differential privacy, where \citet{Aga} introduced the Skellam mechanism for differential learning, to name a few. In \citet{Bar} the authors introduced the Skellam process and used it in financial applications (see also \citet{Koopman}). We refer to \citet{Karl3} and \citet{Tomy} for two recent reviews. Here we provide a general definition of the Skellam (point) process, which includes all the definitions presented in the literature as special cases.
\begin{definition}
	[Skellam point process] The Skellam point process is a signed point process whose Jordan decomposition are two independent Poisson point processes.
\end{definition}
\noindent Thus, the Skellam point process $\xi$ is obtained by the difference of two independent Poisson point processes and $\xi(B)$ is distributed according to the Skellam distribution, for every $B\in\hat{\mathcal{S}}$.  There is no restriction on the two independent Poisson point processes; in particular, they can have any intensities. We stress that such a general definition is possible thanks to the results of the previous section. 

We are now ready to present the result on the equivalence between the Skellam point process and the simple signed point process with independent increments.

\begin{proposition}\label{pro-Skellam}
	Let $\xi$ be a simple signed point process with independent increments. Then, $\xi$ is a Skellam point process. Conversely, let $\eta$ be a Skellam point process. Then, $\eta$ is a simple signed point process with independent increments.
\end{proposition}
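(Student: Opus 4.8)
The plan is to treat the two implications separately, using the completely-random-signed-measure representation of Theorem~\ref{thm-representation} together with the independence of the Jordan components established in Corollaries~\ref{co-two-CRSM} and~\ref{co-two-CRSM-2}. For the forward implication, suppose $\xi$ is a simple signed point process with independent increments; then $\xi$ is a CRSM, and Theorem~\ref{thm-representation} gives $\xi(B)=\sum_{j=1}^{p}X_j\delta_{s_j}(B)+\alpha(B)+\int_{\mathbb{R}\setminus\{0\}}x\,\Psi(B\times dx)$. Since $\xi$ is a signed point process it has no diffuse component, so $\alpha=0$; since $\xi$ is simple, every atom has weight $\pm1$, and because $F(\{s\}\times(\mathbb{R}\setminus\{0\}))=0$ already forces distinct atom locations, simplicity additionally pins the jump sizes to $\{-1,+1\}$, so the intensity $F$ of $\Psi$ is supported on $S\times\{-1,+1\}$ and the fixed weights satisfy $X_j\in\{-1,0,1\}$. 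Granting that the fixed atomic component is trivial, Corollary~\ref{co-two-CRSM} yields $\xi_+(B)=\Psi(B\times\{1\})$ and $\xi_-(B)=\Psi(B\times\{-1\})$, the restrictions of $\Psi$ to the two disjoint marks; as restrictions of a Poisson process to disjoint sets these are independent simple Poisson point processes on $S$ (cf.\ also Theorem 3.17 in \citet{Kallenberg2}), so the Jordan decomposition of $\xi$ consists of two independent Poisson point processes and $\xi$ is a Skellam point process.

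For the converse, let $\eta$ be a Skellam point process, so by definition its Jordan decomposition $(\eta_+,\eta_-)$ is a pair of independent Poisson point processes; being simple with diffuse intensities, $\eta_+$ and $\eta_-$ are atomless CRMs. Applying Corollary~\ref{co-two-CRSM-2} to the independent pair $(\eta_+,\eta_-)$ shows that $\eta=\eta_+-\eta_-$ is an atomless CRSM -- in particular it has independent increments -- whose Jordan decomposition is exactly $(\eta_+,\eta_-)$. Since $\eta_+$ and $\eta_-$ are simple and supported on a.s.\ disjoint sets, every atom of $\eta$ has weight $\pm1$, whence $\eta$ is a simple signed point process. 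This gives the stated equivalence.

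The step I expect to be the main obstacle is the exclusion of a nondegenerate fixed atomic component in the forward direction. Corollary~\ref{co-two-CRSM} is stated for atomless CRSMs, yet a genuine fixed atom at some $s$ would make $\xi_+(\{s\})$ (or $\xi_-(\{s\})$) a Bernoulli variable, which is incompatible with the Poisson conclusion; I would therefore need to argue that, in the standing framework, simplicity together with independent increments leaves the fixed component trivial, or else phrase the equivalence modulo that component. The accompanying verification that simplicity forces the marks of $\Psi$ into $\{-1,+1\}$ -- via the constraint $F(\{s\}\times(\mathbb{R}\setminus\{0\}))=0$ and the marked-point-process bound $|\xi|(\{s\}\times\cdot)\le1$ from Lemma~\ref{lem-correspondence} -- is the other place requiring care, after which everything follows mechanically from Corollaries~\ref{co-two-CRSM} and~\ref{co-two-CRSM-2}.
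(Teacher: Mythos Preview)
Your approach is essentially the same as the paper's: both directions rely on Corollary~\ref{co-two-CRSM} (forward) and Corollary~\ref{co-two-CRSM-2} (converse), with the characterisation of simple point processes with independent increments as Poisson (Theorem~3.17 in \citet{Kallenberg2}) doing the remaining work. The paper's proof is more direct: it does not pass through the full representation of Theorem~\ref{thm-representation} and the identification of the support of $F$, but simply observes that the Jordan components $\xi_+,\xi_-$ are independent CRMs with unit atom masses, hence Poisson by Theorem~3.17 in \citet{Kallenberg2}. Your route via $\Psi$ and its mark set $\{-1,+1\}$ is correct but adds an unnecessary layer.

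Your concern about the fixed atomic component is well placed: the paper's own proof invokes Corollary~\ref{co-two-CRSM}, which is stated only for \emph{atomless} CRSMs, and asserts ``$\xi_+$ and $\xi_-$ are two independent CRMs without fixed atoms'' without justifying the absence of fixed atoms. So the gap you identify is not a flaw in your argument relative to the paper---the paper treats this point just as implicitly. In both proofs one should either assume no fixed atoms or note that the equivalence is meant modulo that component.
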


\subsection{Gaussian random measure}
An important example of random signed measure is the Gaussian random measure. Here we define the Gaussian random measure (Grm) as a Gaussian process $(\xi(A))_{A\in\mathcal{S}}$ such that, for each $\omega\in\Omega$, $\xi(\omega,\cdot)$ is a finite signed measure on $\mathcal{S}$. Thus, $\xi$ is a random signed measure according to our definition. Such Gaussian random measures are the limiting object of various central limit theorems for measure-valued processes (\citet{Karr}). The properties of such random signed measures have been studied in \citet{Horo-Gauss}, among others. Here we summarize his two main results that characterize the Grm. Assume that the Grm has mean zero. The covariance kernel $\nu_0$ of a Grm $\xi$ is defined on $\mathcal{S}\times\mathcal{S}$ by $\nu_0(A,B)=\mathbb{E}[\xi(A)\xi(B)]$, $A,B\in\mathcal{S}$.

\begin{theorem}[from Theorem 1.2 in \citet{Horo-Gauss}]\label{thm-Horo-1} A symmetric, finite, signed bimeasure $\nu_0$ on $\mathcal{S}\times \mathcal{S}$ is the covariance kernel of a Grm if and only if the following two conditions hold:
	\\ \textnormal{(i)} $\nu_0$ is positive definite
	\\ \textnormal{(ii)}  $\exists$  $k>0$ s.t.~$\sum_{i}\nu_0^{1/2}(B_i,B_i)\leq k $ for any countable partition of $S$ by sets $B_i\in\mathcal{S}$.
\end{theorem}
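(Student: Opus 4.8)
The plan is to prove the two implications separately: necessity by elementary Gaussian computations combined with one integrability fact for Gaussian suprema, and sufficiency by a Kolmogorov construction followed by the extension result of this paper (Theorem \ref{thm-extension}, or equivalently Corollary \ref{co-extension}). For necessity, suppose $\nu_0$ is the covariance kernel of a mean-zero Grm $\xi$. Condition (i) is immediate: for $A_1,\dots,A_n\in\mathcal{S}$ and $c_1,\dots,c_n\in\mathbb{R}$ we have $\sum_{i,j}c_ic_j\nu_0(A_i,A_j)=\mathbb{E}[(\sum_i c_i\xi(A_i))^2]\ge0$. For (ii), fix a countable partition $\{B_i\}$ of $S$; since $\xi(B_i)$ is centered Gaussian with variance $\nu_0(B_i,B_i)$, the half-normal identity gives $\mathbb{E}|\xi(B_i)|=\sqrt{2/\pi}\,\nu_0^{1/2}(B_i,B_i)$, and because $\xi(\omega,\cdot)$ is a finite signed measure, $\sum_i|\xi(\omega,B_i)|\le|\xi|(\omega,S)$. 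Hence by Tonelli $\sum_i\nu_0^{1/2}(B_i,B_i)=\sqrt{\pi/2}\,\sum_i\mathbb{E}|\xi(B_i)|\le\sqrt{\pi/2}\,\mathbb{E}|\xi|(S)$, so it suffices to prove $\mathbb{E}|\xi|(S)<\infty$. Using Lemma \ref{lem-countable-ring-existence} I fix a countable generating ring $\mathcal{R}$; then $|\xi|(S)=\sup\sum_k\varepsilon_k\xi(A_k)$, the supremum running over the countable set of finite disjoint families from $\mathcal{R}$ and sign assignments $\varepsilon_k\in\{\pm1\}$, so $|\xi|(S)$ is the supremum of a countable centered Gaussian family, and it is a.s.\ finite since the paths are finite signed measures. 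The Landau--Shepp--Fernique/Borell integrability theorem then upgrades a.s.\ finiteness of a Gaussian supremum to finiteness of its mean, yielding $\mathbb{E}|\xi|(S)<\infty$ and hence (ii) with $k=\sqrt{\pi/2}\,\mathbb{E}|\xi|(S)$.

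For sufficiency, assume $\nu_0$ is symmetric, finite, positive definite and satisfies (ii). By (i) the centered Gaussian finite-dimensional laws with covariances read off from $\nu_0$ are consistent, so Kolmogorov extension yields a centered Gaussian process $(\xi(A))_{A\in\mathcal{S}}$ with $\mathbb{E}[\xi(A)\xi(B)]=\nu_0(A,B)$. I fix a countable generating ring $\mathcal{R}$ and verify the hypotheses of Theorem \ref{thm-extension} for $(\xi(U))_{U\in\mathcal{R}}$. Finite additivity $\xi(A\cup B)\stackrel{a.s.}{=}\xi(A)+\xi(B)$ for disjoint $A,B$ follows from the separate additivity of the bimeasure $\nu_0$, since $\mathbb{E}[(\xi(A\cup B)-\xi(A)-\xi(B))^2]=0$. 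For null-continuity, if $A_n\downarrow\emptyset$ then $\{A_n\setminus A_{n+1}\}$ is a partition piece, so (ii) forces $\sum_n\nu_0^{1/2}(A_n\setminus A_{n+1},A_n\setminus A_{n+1})<\infty$; writing $\xi(A_n)=\sum_{m\ge n}\xi(A_m\setminus A_{m+1})$ and bounding its $L^2$-norm by the tail of this convergent series gives $\xi(A_n)\to0$ in $L^2$, hence in probability. For the total-variation condition, along a refining sequence of partitions of $A$ the sums $\sum_k|\xi(A_k)|$ increase, and monotone convergence sends their expectations to $\sqrt{2/\pi}\,\sup\sum_k\nu_0^{1/2}(A_k,A_k)\le\sqrt{2/\pi}\,k<\infty$, so the supremum is a.s.\ finite. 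Theorem \ref{thm-extension} then produces an a.s.\ unique random signed measure $\xi$ agreeing with the process on $\mathcal{R}$; its increments over $\mathcal{R}$ are Gaussian, and any $\xi(A)$ is an in-probability limit of ring values and so Gaussian as well, whence $\xi$ is a Grm. Since the covariance of $\xi$ and $\nu_0$ are bimeasures agreeing on the generating ring, they coincide, so $\nu_0$ is the covariance kernel of $\xi$.

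The one genuinely non-routine point is the integrability step in the necessity proof: passing from a.s.\ finiteness of the total variation to finiteness of its expectation. This is where the Gaussian structure is indispensable, via the identification of $|\xi|(S)$ as the supremum of a centered Gaussian family together with a Fernique/Borell-type integrability theorem. Everything else reduces to the half-normal identity, the bimeasure property, and the extension theorem already established earlier in this paper.
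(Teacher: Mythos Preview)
The paper does not prove this statement: Theorem~\ref{thm-Horo-1} is quoted from \citet{Horo-Gauss} (note the attribution in the heading and the surrounding sentence, ``Here we summarize his two main results''), and no proof appears anywhere in the appendix. So there is no in-paper argument to compare your proposal against.

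Your argument is essentially correct and is a nice application of the paper's own Theorem~\ref{thm-extension} for the sufficiency direction, which Horowitz of course could not have used. Two points deserve a little more care. First, condition~(ii) of Theorem~\ref{thm-extension} asks for almost-sure convergence, not merely convergence in probability; you obtain it from your series bound, since $\sum_m\mathbb{E}|\xi(A_m\setminus A_{m+1})|=\sqrt{2/\pi}\sum_m\nu_0^{1/2}(A_m\setminus A_{m+1},A_m\setminus A_{m+1})<\infty$ forces a.s.\ convergence of the partial sums, and your $L^2$ computation identifies the residual as zero. Second, Theorem~\ref{thm-extension} produces a locally finite object on $\hat{\mathcal{S}}$, whereas the Grm must be a \emph{finite} signed measure on all of $\mathcal{S}$; you should close this by noting that your partition bound plus monotone convergence along $S_n\uparrow S$ gives $\mathbb{E}|\xi|(S)\le\sqrt{2/\pi}\,k<\infty$, and then that approximating any $A\in\mathcal{S}$ by ring sets in $\mathbb{E}|\xi|$-measure justifies the claim that $\xi(A)$ is an in-probability limit of Gaussians. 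Your identification of the Fernique/Borell integrability theorem as the one genuinely nontrivial step in the necessity direction is accurate.
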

\begin{theorem}[from Theorem 1.3 in \citet{Horo-Gauss}]\label{thm-Horo-2} Let $\xi$ be a Grm. There exists a Gaussian process $(W(x))_{x\in S}$ having mean 0 and constant variance, such that, a.s., $\xi(A) = \int_{A} W(x)\mu(dx)$, $A\in\mathcal{S}$, where $\mu$ is the finite measure $\mu(A)=\mathbb{E}[|\xi(A)|]$.
\end{theorem}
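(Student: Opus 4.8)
The plan is to realise $W$ as a Radon--Nikodym density of $\xi$ with respect to the deterministic measure $\mu$, obtained as the almost sure limit of a martingale along a refining dissection, and then to read off its Gaussianity and the constant value of its variance from the defining relation $\mu=\mathbb{E}[|\xi|]$ (here $|\xi|=\xi_++\xi_-$ is the total-variation random measure of Lemma~\ref{lem-Jordan}, so that $\mu(A)=\mathbb{E}[|\xi|(A)]$).

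First I would verify that $\mu$ is a genuine finite measure. Writing the total variation as a supremum over finite partitions and using $\mathbb{E}|\xi(B)|=\sqrt{2/\pi}\,\nu_0^{1/2}(B,B)$ for the centred Gaussian $\xi(B)$, monotone convergence gives
\begin{equation*}
\mu(A)=\sqrt{2/\pi}\,\sup\Big\{\textstyle\sum_i\nu_0^{1/2}(B_i,B_i):\ \{B_i\}\text{ a finite partition of }A\Big\},
\end{equation*}
which is countably additive and, by condition (ii) of Theorem~\ref{thm-Horo-1}, bounded by $\sqrt{2/\pi}\,k$; hence $\mu$ is finite. Moreover $\mu f=0\Leftrightarrow|\xi|f\stackrel{a.s.}{=}0\Rightarrow\xi f\stackrel{a.s.}{=}0$, so $\mu$ is precisely a control measure in the sense of Corollary~\ref{co-uncited-3}.

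Next, fix a dissection system $(A_{nj})$ generating $\mathcal{S}$ (Lemma~\ref{lem-countable-ring-existence}) and set $W_n(\omega,x)=\sum_j\big(\xi(\omega,A_{nj})/\mu(A_{nj})\big)\mathbf{1}_{A_{nj}}(x)$, with the convention $0/0=0$. A direct computation shows that for each $\omega$ the sequence $(W_n(\omega,\cdot))_n$ is an $L^1(\mu)$-martingale for the filtration generated by the partitions; it converges $\mu$-a.e.\ to a jointly measurable limit $W(\omega,x)$. For any $x_1,\dots,x_m$ the vector $(W_n(x_1),\dots,W_n(x_m))$ is a linear image of the Gaussian family $\xi$, hence Gaussian, and passing to the limit shows that $(W(x))_{x\in S}$ is a centred Gaussian process. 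For sets $A$ that are unions of dissection cells one has $\int_A W_n\,d\mu=\xi(A)$ for all large $n$, so once the martingale is known to be closed the representation $\xi(A)=\int_A W\,d\mu$ holds on the countable generating ring and then, by uniqueness of measures, almost surely for every $A\in\mathcal{S}$. Granting the representation, an absolutely continuous signed measure with density $W$ has total variation $\int_A|W|\,d\mu$, so $\mu(A)=\mathbb{E}[|\xi|(A)]=\int_A\mathbb{E}|W(x)|\,\mu(dx)$ for all $A$ forces $\mathbb{E}|W(x)|=1$ $\mu$-a.e., which for a centred Gaussian means $\mathrm{Var}(W(x))=\pi/2$, a constant.

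The main obstacle is exactly the closedness (uniform integrability) of the martingale, equivalently the statement that $\xi(\omega,\cdot)\ll\mu$ for almost every $\omega$; without it the limit $W$ recovers only the $\mu$-absolutely-continuous part of $\xi$. The difficulty is that the $\mu$-null set carrying a putative singular part of $\xi(\omega,\cdot)$ may depend on $\omega$, so the fixed-set implication of Corollary~\ref{co-uncited-3} does not globalise directly. I would control this through the shrinking-cell ratio $\sqrt{2/\pi}\,\nu_0^{1/2}(A_{nj},A_{nj})/\mu(A_{nj})=\mathbb{E}|W_n(x)|$: showing that it tends to $1$ amounts to showing that at small scales no sign cancellation survives, i.e.\ $|\xi|(A_{nj})/|\xi(A_{nj})|\to1$, which is precisely where condition (ii) of Theorem~\ref{thm-Horo-1}---ruling out white-noise behaviour---is essential. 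Combined with fibrewise uniform integrability this yields $\mathbb{E}|W(x)|=1$, and then $\int_S|W|\,d\mu$ and $\lim_n\int_S|W_n|\,d\mu=|\xi|(S)$ share the same expectation $\mu(S)$, so the singular mass has zero expectation and $\xi(\omega,\cdot)\ll\mu$ a.s. This local dissection analysis mirrors the one underlying Lemma~\ref{lem-correspondence-pos} and Example~\ref{counterexample}, and is the technical heart of the argument.
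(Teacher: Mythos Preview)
The paper does not prove this statement; it is quoted as Theorem~1.3 of \citet{Horo-Gauss} and presented as background, with no argument given in the text or the appendix. There is therefore no in-paper proof to compare your attempt against.

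On its own merits your outline is the natural one: construct $W$ as the a.s.\ Radon--Nikodym density of $\xi(\omega,\cdot)$ with respect to $\mu=\mathbb{E}[|\xi|(\cdot)]$ via a dissection martingale, pass Gaussianity through the a.s.\ limit, and read off constancy of the variance from the defining identity for $\mu$. You also correctly isolate the one non-routine step, namely $\xi(\omega,\cdot)\ll\mu$ almost surely, and you correctly explain why the fixed-set implication of Corollary~\ref{co-uncited-3} does not globalise.

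Where the proposal falls short is precisely that step. The final-paragraph scheme is not circular in structure: if one knew $\mathbb{E}|W(x)|=1$ for $\mu$-a.e.\ $x$, then Fubini together with the Lebesgue decomposition $|\xi|(S)=\int_S|W|\,d\mu+|\xi_s|(S)$ would indeed force $\mathbb{E}[|\xi_s|(S)]=0$. But the prerequisite claim---that the cell ratio $\mathbb{E}|\xi(A_{nj})|/\mu(A_{nj})=\mathbb{E}|\xi(A_{nj})|/\mathbb{E}[|\xi|(A_{nj})]$ tends to $1$---is asserted, not proved. It is a genuine differentiation statement about $\mu$ that does not follow from condition~(ii) of Theorem~\ref{thm-Horo-1} (which only yields $\mu(S)<\infty$), nor from any of the dissection lemmas in the present paper; the references to Lemma~\ref{lem-correspondence-pos} and Example~\ref{counterexample} are not to the point, since those concern locating atoms rather than ruling out a diffuse singular part. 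This is exactly the place where specifically Gaussian input beyond the finite-dimensional marginals is required, and your sketch does not supply it. As written, the hardest step remains a gap.
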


By definition if $(A_n)_{n\in\mathbb{N}}$ is a pairwise disjoint sequence in $\mathcal{S}$ we have
\begin{equation}\label{Grm}
	\xi(\omega,\cup_{n=1}^\infty A_n)=\sum_{n=1}^{\infty}\xi(\omega,A_n)
\end{equation}
for all $\omega\in\Omega$. We point out that there is another definition of Gaussian random measure, which following \citet{Horo-Gauss} we call Gaussian quadratic random measure (Gqrm), which consists of a Gaussian process $(\eta(A))_{A\in\mathcal{S}}$ for which $(\ref{Grm})$ holds in quadratic mean. Thus, a Grm is always a Gqrm, but a Gqrm does not need to be a Grm, \textit{e.g.}~the Brownian stochastic integral $\int_{A} dW(t)$ is a Gqrm but not a Grm. 

\begin{remark}
	We can define the Gaussian measure as a Gaussian process $(\xi(A))_{A\in\hat{\mathcal{S}}}$ such that for each $\omega\in\Omega$ it is a countably additive real-valued set function on $\mathcal{S}$. Such definition is more general than the one of \citet{Horo-Gauss} and it is still a random signed measure according to our definition. Then, it might be possible to obtain results like Theorems \ref{thm-Horo-1} and \ref{thm-Horo-2} for this more general definition of Grm. We leave this to further research.
\end{remark}

\subsection{Many examples of CRSMs}
Given that a CRSM is the difference of two independent CRMs we can easily obtain numerous examples of CRSMs and of intensity measures of CRSMs. For example, if $\eta_1$ and $\eta_2$ are two independent CRMs, then the intensity measure $F$ of a CRSM $\xi=\eta_1-\eta_2$ is given by the intensity measures $F_1$ and $F_2$ as shown in Corollary \ref{co-two-CRSM}. In particular, let $\tilde{F}_2$ be a measure on $S\times(-\infty,0)$ s.t.~$\tilde{F}_2(B\times -A)=F(B\times A)$ where $B\in\hat{\mathcal{S}}$ and $A\in\mathcal{B}((0,\infty))$. Then, for every $F(B\times \cdot)$-integrable function $g$ we have
\begin{equation*}
	\int_{\mathbb{R}\setminus\{0\}}g(x)F(B\times dx)= 	\int_{0}^\infty g(x)F_1(B\times dx)+	\int_{-\infty}^0g(x)\tilde{F}_2(B\times dx).
\end{equation*}

\subsection{Random (signed) measures vs independently scattered random measures}
An independently scattered random measure (isrm) $\Lambda=(\Lambda(A))_{A\in\hat{\mathcal{S}}}$ is a collection of real random variables such that $\Lambda(A)$ is infinitely divisible for every $A\in\hat{\mathcal{S}}$, $\Lambda(A_1),...,\Lambda(A_n)$ are independent when $A_1,...,A_n\in\hat{\mathcal{S}}$ are disjoint, and $\Lambda(\cup_{n=1}^\infty A_n)\stackrel{a.s.}{=}\sum_{n=1}^\infty \Lambda(A_n)$ when $A_1,A_2,...$ are disjoint and $\cup_{n=1}^\infty A_n\in\hat{\mathcal{S}}$. Recall that a random signed measure $\xi$ satisfies the property that $\xi(\cup_{n=1}^\infty A_n)=\sum_{n=1}^\infty \Lambda(A_n)$ for all $A_1,A_2,...$ disjoint with $\cup_{n=1}^\infty A_n\in\hat{\mathcal{S}}$ a.s.. 

The first two properties of the isrm are very restrictive compared to the one of random signed measure, while the last property of isrm is similar to the one of random signed measure. In particular, the difference between the last property of isrm and the one of random signed measures is subtle. For isrm, each collection $A_1,A_2,...$ has a zero measure set for which that equality does not hold. The union of these zero measure sets might not be a zero measure set, because there are unaccountably many collections of sets in $\hat{\mathcal{S}}$. On the other hand, for random (signed) measures there is only one zero measure set for which the equality does not hold for all collections of sets in $\hat{\mathcal{S}}$.

While subtle, this difference holds profound significance. It enables us to define a random (signed) measure as a singular stochastic entity, rather than a collection of random variables. This pivotal shift opens the door to discussions regarding the measurability of random (signed) measures and of their representations. It allows us to delve into topics such as the convergence of random measures (as explored in Chapter 4 of \citet{Kallenberg2}), and it allows the derivation of results in the same complete form as those presented in this paper. All things that are not possible for isrm. 

It is worth noting that this difference is the same difference between conditional probabilities and regular conditional probabilities. Moreover, note that Theorem \ref{thm-extension} tells us that an isrm can be extended to a random signed measure if and only if it satisfies conditions (i), (ii) and (iii) there.  In particular, the extension of an isrm is a CRSM.

On the other hand, one of the main advantages of isrm is the capability to incorporate a Gaussian component and a greater number of small jumps, all while maintaining independent increments.
\section{Applications}\label{Sec-applications}
\subsection{Sentiment topic modelling}\label{Sec-Bay}
In this section we present a novel class of nonparametric prior distributions based on CRSMs, which extends the one developed in \citet{Bro1}, further explored in \citet{Bro2} and \citet{PassRM} among others. In particular, let the prior be modeled as $\Theta:=\sum_{k=1}^{K}\theta_{k}\delta_{\psi_{k}}$, where $K$ may be either finite or infinite and where $(\theta_{k},\psi_{k})$ is a pair consisting of the frequency (or rate) of the $k$-th trait together with its trait $\psi_{k}$, which belongs to some complete separable metric space $\Psi$ of traits. Notice that $\psi_{1},...,\psi_{K}$ include both the fixed and non-fixed atoms. This representation follows the representation of a CRSM without deterministic component, see Theorem \ref{thm-representation}. The data point for the $m$-th individual is modelled as $X_{m}:=\sum_{k=1}^{K_{m}}x_{m,k}\delta_{\psi_{k}}$,
where $x_{m,k}$ represents the degree to which the $m$-th data point belongs to the trait $\psi_{k}$.

This model has been used in many applications when the $\theta_{k}$s take only positive values, for example in topic modeling (see \textit{e.g.}~\citet{Bro1,Bro2,topic-modelling}). In this section we explore the case of $\theta_{k}$ being real-valued, which allows for modeling of new classes of phenomena. For example, this is particularly important for sentiment analysis, also known as opinion mining, see the monograph \citet{liu}. Indeed, a basic task in sentiment analysis is classifying the \textit{polarity} of a given text in a document, namely whether the expressed opinion is positive, negative, or neutral.

From a formal point of view $\Theta$ and $X_{m}$ are defined as CRSMs. In particular, for the data $X_{m}$, we let $x_{m,k}$ be drawn according to some discrete distribution $H$ with probability mass function $h$ that takes $\theta_{k}$ as a parameter and has support on $\mathbb{Z}$, that is $x_{m,k}|\theta_k\stackrel{iid}{\sim}h(x|\theta_{k})$, independently across $m$ and $k$. We assume that $X_{1},...,X_{m}$ are i.i.d.~conditional on $\Theta$. The following assumptions for $\Theta$ and $X_{m}$ are the real-valued version of the assumptions in \citet{Bro1}:
\\\\ \textit{Assumption A00}: the atomless component of $\Theta$ has characteristic pair $(\gamma,\mu)$ s.t.~$\gamma=0$ and $\mu(d\theta\times d\psi)=\nu(d\theta)\cdot G(d\psi)$, where $\nu$ is any $\sigma$-finite measure on $\mathbb{R}\setminus\{0\}$ and $G$ is a proper distribution on $\Psi$ with no atoms. 
\\ \textit{Assumptions A0, A1, and A2}: $\Theta$ has finitely many fixed atoms, $\nu(\mathbb{R}\setminus\{0\})=\infty$, and $\sum_{x\in\mathbb{Z}\setminus\{0\}}\int_{\mathbb{R}\setminus\{0\}}h(x|\theta)\nu(d\theta)$ $<\infty$, respectively.
\\\\
We remark that by Assumption A00 we have that the location of the non-fixed atoms $\psi$ and the weights $\theta_{k}$ are stochastically independent. We call $\nu$ the \textit{weight measure} of $\Theta$. We call $\theta_{k}$ the \textit{weights} for consistency with the literature, even though here they can take negative values. Moreover, the assumption that $H$ has a point mass on $\mathbb{R}\setminus\{0\}$ and the assumptions A0, A1 and A2 come from a modeling need. By A0 we are assuming that we initially look at most finitely many investments, by A1 that there are possibly countably infinite investments, and by A2 that the amount of information from finitely represented data is finite, because by A2 the number of non-fixed atoms is finite, \textit{e.g.}~the competitor's portfolio has at most finitely many investments.

In the next result we show explicit formulations for the posterior distribution $\Theta|X$. We can write $\Theta=\sum_{k=1}^{K}\theta_{k}\delta_{\psi_{k}}$, where $K=K_{fix}+K_{ord}$. We denote the fixed component of $\Theta$ by $\Theta_{fix}=\sum_{k=1}^{K_{fix}}\theta_{fix,k}\delta_{\psi_{fix,k}}$ and the law of $\theta_{fix,k}$ by $F_{fix,k}:=\mathcal{L}(\Theta(\{\psi_{fix,k}\}))$.
\begin{proposition}\label{pro-Bay}
	Let $\Theta$ be a CRSM satisfying $A00$, $A0$, $A1$ and $A2$. Write $\Theta=\sum_{k=1}^{K}\theta_{k}\delta_{\psi_{k}}$, and let $X_{1}, . . . , X_{m}$ be generated i.i.d.~conditional on $\Theta$ according to $X_{1}:=\sum_{k=1}^{K}x_{1,k}\delta_{\psi_{k}}$ with $x_{1,k}|\theta_k\stackrel{iid}{\sim} h(\cdot|\theta_{k})$. Let $\Theta_{post}$ be a random signed measure with the distribution of $\Theta|X_1,...,X_m$. Then, $\Theta_{post}$ is a CRSM with three parts.
	
	\textnormal{1.} For each $k\in [K_{fix}]$, $\Theta_{post}$ has a fixed atom at $\psi_{fix,k}$ with weight $\theta_{post,fix,k}$ distributed according to $F_{post,fix,k} (d\theta)\propto F_{fix,k} (d\theta)\prod_{j=1}^{m}h(x_{fix,j,k}|\theta)$.
	
	\textnormal{2.} Let $\{\psi_{new,k}: k \in [K_{new} ]\}$ be the union of atom locations of $X_1,...,X_m$ minus the
	fixed atoms locations in the prior of $\Theta$. $K_{new}$ is finite. Let $x_{new,j,k}$ be the weight of the atom in $X_{j}$ located at $\psi_{new,k}$, for some $j=1,...,m$. Then, $\Theta_{post}$ has a fixed atom at $x_{new,k}$ with	random weight $\theta_{post,new,k}$, whose distribution $F_{post,new,k}(d\theta)\varpropto\nu(d\theta)\prod_{j=1}^{m}h(x_{new,j,k}|\theta)$.
	
	\textnormal{3.} The ordinary component of $\Theta_{post}$ has weight measure $\nu_{post}(d\theta):=\nu(d\theta)h(0|\theta)^m$.
\end{proposition}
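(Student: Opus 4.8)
The plan is to reduce the computation to a marked Poisson process argument via the representation in Theorem \ref{thm-representation}. First I would use Assumption A00 (so that the deterministic part vanishes, $\gamma=0$) together with Theorem \ref{thm-representation} to write $\Theta=\Theta_{fix}+\int_{(\mathbb{R}\setminus\{0\})\times\Psi}\theta\,\delta_\psi\,\Psi_0(d\theta\,d\psi)$, where $\Theta_{fix}=\sum_{k=1}^{K_{fix}}\theta_{fix,k}\delta_{\psi_{fix,k}}$ is the fixed atomic part, with independent weights $\theta_{fix,k}\sim F_{fix,k}$, and $\Psi_0$ is a Poisson process on $(\mathbb{R}\setminus\{0\})\times\Psi$ with intensity $\nu(d\theta)\,G(d\psi)$, independent of $\Theta_{fix}$. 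Because the $X_j$ are i.i.d.\ given $\Theta$ and the atoms sit at distinct locations, the likelihood factorizes across atoms: each atom at $\psi$ with weight $\theta$ emits an independent vector $(x_1,\dots,x_m)$ of i.i.d.\ $h(\cdot\mid\theta)$ observations.

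The core step is to mark each point of $\Psi_0$ by its observation vector and invoke the marking theorem: the marked process on $(\mathbb{R}\setminus\{0\})\times\Psi\times\mathbb{Z}^m$ is Poisson with intensity $\nu(d\theta)\,G(d\psi)\prod_{j=1}^{m}h(x_j\mid\theta)$. I would then split this process over the disjoint mark regions $\{\mathbf{0}\}$ and $\mathbb{Z}^m\setminus\{\mathbf{0}\}$, which are therefore independent. The restriction to all-zero marks is a Poisson process whose weight intensity is the $\nu_{post}(d\theta)=\nu(d\theta)\,h(0\mid\theta)^m$ of part 3; the restriction to nonzero marks has total intensity mass bounded by $m\int(1-h(0\mid\theta))\nu(d\theta)$, which is finite by Assumption A2, hence it has finitely many points almost surely, yielding the finitely many new locations of part 2.

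Next I would identify what the data pin down. The record $X_1,\dots,X_m$ is a measurable function of $\Theta_{fix}$ and of the nonzero-mark part of the marked process alone, while the all-zero-mark part is independent of it; consequently the posterior of the all-zero part equals its prior, giving exactly the ordinary component of $\Theta_{post}$ with weight measure $\nu_{post}$. For the prior fixed atoms, the independence of $\theta_{fix,k}$ from the remainder and a direct application of Bayes' rule with likelihood $\prod_{j=1}^{m}h(x_{fix,j,k}\mid\theta)$ produce $F_{post,fix,k}$, which is part 1. Finally, combining the three mutually independent pieces and appealing once more to Theorem \ref{thm-representation} shows that $\Theta_{post}$ is itself a CRSM.

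The hard part will be part 2, the posterior law of the weights at the observed ordinary locations. I would obtain it through Palm calculus for the marked Poisson process (equivalently, the Slivnyak--Mecke formula): conditionally on the marked process having a point at a new location $\psi_{new,k}$ with the observed mark vector $(x_{new,1,k},\dots,x_{new,m,k})\neq\mathbf{0}$, disintegrating the intensity $\nu(d\theta)\,G(d\psi)\prod_j h(x_j\mid\theta)$ in the $\theta$-coordinate yields $F_{post,new,k}(d\theta)\propto\nu(d\theta)\prod_{j=1}^{m}h(x_{new,j,k}\mid\theta)$, and the same formula delivers mutual independence of the weights across distinct new locations. The delicate point is that this is a genuine conditioning on the infinite process $\Psi_0$; I would make it rigorous either by a dissection argument over a shrinking partition of $\Psi$, so that each cell eventually contains at most one observed atom, and passing to the limit, or by invoking the explicit Palm kernel of a Poisson process directly, taking care that the splitting and the signed range $\mathbb{R}\setminus\{0\}$ introduce no measurability obstructions, which is guaranteed by Lemma \ref{lem-correspondence} and Theorem \ref{thm-representation}.
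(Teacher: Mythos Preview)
Your proposal is correct and rests on the same Poisson marking/thinning mechanism as the paper, but the execution differs in two respects worth noting.

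First, the paper does not treat all $m$ observations simultaneously. It proves the case $m=1$ by thinning the Poisson process $\sum_k\delta_{\theta_{ord,k}}$ on $\mathbb{R}\setminus\{0\}$ with retention probability $h(x\mid\theta)$ for each $x\in\mathbb{Z}$, then observes that $\Theta\mid X_1$ is again a CRSM satisfying A00, A0, A1, A2, and concludes by induction on $m$. Your direct approach---marking each ordinary atom by the full vector $(x_1,\dots,x_m)\in\mathbb{Z}^m$ and splitting on $\{\mathbf{0}\}$ versus its complement---avoids the induction and delivers the product $\prod_{j=1}^m h(x_{new,j,k}\mid\theta)$ in one step, at the modest cost of working in a larger mark space.

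Second, your treatment of part~2 is heavier than necessary. You invoke Palm calculus and worry about making the conditioning on the infinite process rigorous via dissection. The paper bypasses this entirely: once one knows that the nonzero-mark restriction is Poisson with \emph{finite} total intensity (your own bound via A2), the elementary fact that such a process, conditionally on its number of points, consists of i.i.d.\ draws from the normalised intensity immediately gives $F_{post,new,k}(d\theta)\propto\nu(d\theta)\prod_j h(x_{new,j,k}\mid\theta)$ and the mutual independence across new locations. No Slivnyak--Mecke, no shrinking partition, no appeal to Lemma~\ref{lem-correspondence} is needed here. Your argument is not wrong, but you are reaching for machinery where a one-line observation suffices.
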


We can generalize this framework (and the one of \citet{Bro1}) by modeling the fixed atoms differently from the other atoms and by using continuous distributions. Instead of $X=\sum_{k=1}^{K}x_{k}\delta_{\psi_{k}}$ with $x_{k}|\theta_k\stackrel{iid}{\sim} h(x|\theta_{k})$ we can model $X$ by $X=\sum_{k=1}^{K}x_{k}\delta_{\psi_{k}}$ with $x_{k}|\theta_k\stackrel{iid}{\sim} H_{fix}(dx|\theta_{k})$ if $\psi_{k}$ is a fixed atom and $x_{k}|\theta_k\stackrel{iid}{\sim} H_{ord}(dx|\theta_{k})$ otherwise, for some distribution $H_{fix}$ continuous and some distribution $H_{ord}$ continuous except for a point mass at zero. Denote by $f_{fix}$, $h_{fix}$, $h_{ord}$ the density of $F_{fix}$, $H_{fix}$ and $H_{ord}$ (on $\mathbb{R}\setminus\{0\}$), respectively. We have the equivalent of Assumption A2 in this setting:
\\\\ \textit{Assumption A2'}: $\int_{\mathbb{R}\setminus\{0\}}\int_{\mathbb{R}\setminus\{0\}}h_{ord}(x|\theta)dx \nu(d\theta)$ $<\infty$.
\begin{proposition}\label{pro-Bay-2}
	Let $\Theta$ be a CRSM satisfying $A00$, $A0$, $A1$ and $A2'$. Assume that $\nu(d\theta)=g(\theta)d\theta$ for some function $g:\mathbb{R}\setminus\{0\}\to(0,\infty)$. Let $X_1,...,X_m$ be generated i.i.d.~conditional on $\Theta$ according to $X_1=\sum_{k=1}^{K}x_{k}\delta_{\psi_{k}}$ with $x_{k}|\theta_k\stackrel{iid}{\sim} H_{fix}(dx|\theta_{k})$ if $\psi_{k}$ is a fixed atom and $x_{k}|\theta_k\stackrel{iid}{\sim} H_{ord}(dx|\theta_{k})$ otherwise. Then, $\Theta_{post}$ is a CRSM with three parts.
	
	\textnormal{1.} For each $k\in [K_{fix}]$, $\Theta_{post}$ has a fixed atom at $\psi_{fix,k}$ with weight $\theta_{post,fix,k}$ with density $f_{post,fix,k} (\theta)\propto f_{fix,k} (\theta)\prod_{j=1}^m h_{fix}(x_{fix,j,k}|\theta)$.
	
	\textnormal{2.} For each $k\in [K_{new}]$, $\Theta_{post}$ has a fixed atom at $\psi_{new,k}$ with weight $\theta_{post,new,k}$, whose density is proportional to $g(\theta)\prod_{j=1}^m h_{ord}(x_{new,j,k}|\theta)$.
	
	\textnormal{3.} The ordinary component of $\Theta_{post}$ has weight measure $\nu_{post}(d\theta):=\nu(d\theta)H_{ord}(\{0\}|\theta)^m$.
\end{proposition}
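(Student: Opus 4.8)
The plan is to mirror the argument behind Proposition~\ref{pro-Bay}, replacing the discrete likelihood by the continuous one, and to exploit both the Poisson structure of the ordinary component and the independence of increments of a CRSM. The starting point is Theorem~\ref{thm-representation}: since $\gamma=0$ in $A00$ there is no deterministic component, so $\Theta$ splits into a fixed atomic part $\Theta_{fix}=\sum_{k=1}^{K_{fix}}\theta_{fix,k}\delta_{\psi_{fix,k}}$, whose weights are independent with laws $F_{fix,k}$, and an ordinary part $\int_{\mathbb{R}\setminus\{0\}}\theta\,\Pi(\,\cdot\times d\theta)$ driven by a Poisson process $\Pi$ (the driving process of Theorem~\ref{thm-representation}) on $\Psi\times(\mathbb{R}\setminus\{0\})$ with intensity $G(d\psi)\,\nu(d\theta)=G(d\psi)\,g(\theta)\,d\theta$, using the factorization in $A00$. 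These two parts are supported on disjoint regions, hence independent, and the likelihood factorizes over atoms because the marks $x_{k}\mid\theta_{k}$ are conditionally independent across $k$. It therefore suffices to compute the posterior of each part separately and recombine; the recombined object again has independent increments, so it is a CRSM.

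For the fixed atoms (Part~1) I would argue by direct Bayes' rule. Conditionally on $\theta_{fix,k}$, the $m$ observed weights $x_{fix,1,k},\dots,x_{fix,m,k}$ are i.i.d.\ with density $h_{fix}(\,\cdot\mid\theta_{fix,k})$; since the prior weight has density $f_{fix,k}$ and the distinct fixed atoms and the ordinary part are mutually independent, the posterior weight has density proportional to $f_{fix,k}(\theta)\prod_{j=1}^m h_{fix}(x_{fix,j,k}\mid\theta)$, and these stay independent across $k$.

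The core is the ordinary part, which I would handle by a marking-and-thinning argument for $\Pi$. Each ordinary atom $(\psi,\theta)$ independently emits, for $j=1,\dots,m$, a value $x_{j}\sim H_{ord}(\,\cdot\mid\theta)$, with $H_{ord}$ continuous except for an atom at $0$; I mark $(\psi,\theta)$ by $(x_1,\dots,x_m)$. By the marking theorem the marked configuration is Poisson on $\Psi\times(\mathbb{R}\setminus\{0\})\times\mathbb{R}^m$ with intensity $G(d\psi)\,g(\theta)\,d\theta\,\prod_{j=1}^m H_{ord}(dx_j\mid\theta)$, and splitting on whether all $m$ marks vanish produces two independent Poisson processes. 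The ``all marks zero'' part has intensity $G(d\psi)\,g(\theta)\,H_{ord}(\{0\}\mid\theta)^m\,d\theta$, i.e.\ an ordinary component with weight measure $\nu_{post}(d\theta)=\nu(d\theta)\,H_{ord}(\{0\}\mid\theta)^m$; these are exactly the atoms that generated no observed data, giving Part~3. The complementary ``some mark nonzero'' part is a finite Poisson process (its expected mass is $\int(1-H_{ord}(\{0\}\mid\theta)^m)\,g(\theta)\,d\theta\le m\int\!\!\int h_{ord}(x\mid\theta)\,dx\,\nu(d\theta)<\infty$ by $A2'$, whence $K_{new}$ is a.s.\ finite); each of its points lands, since $G$ is atomless, at a location $\psi_{new,k}$ a.s.\ distinct from the prior fixed atoms and becomes a posterior fixed atom. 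The Palm calculus then identifies the conditional law of the weight $\theta$ at such a point, given its observed marks, as proportional to $g(\theta)\prod_{j=1}^m h_{ord}(x_{new,j,k}\mid\theta)$, yielding Part~2.

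The step I expect to be the main obstacle is making the Palm computation for Part~2 rigorous with \emph{continuous} marks: one must disintegrate the joint law of $\Pi$ and the observations along the observed mark values and read off the weight density. The cleanest route is to condition on the finite observed location set (finite by $A2'$), apply the Slivnyak--Mecke formula to reduce to the Palm version of $\Pi$ at a single point, and extract the posterior weight density from the factor $g(\theta)\prod_j h_{ord}(x_{new,j,k}\mid\theta)$ appearing in the joint intensity, a routine normalization giving the stated proportionality. The mutual independence of the three families, and hence that $\Theta_{post}$ is a CRSM, then follows from the independent-marking theorem together with the independence of increments inherited from $\Theta$.
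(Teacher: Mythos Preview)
Your proposal is correct and follows the same high-level skeleton as the paper (split $\Theta$ into fixed and ordinary parts via Theorem~\ref{thm-representation}, apply Bayes' rule to each fixed atom, and use Poisson thinning/marking for the ordinary part), but the treatment of the ordinary component differs in an instructive way.

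The paper does not mark the ordinary atoms with the full vector $(x_1,\dots,x_m)$ and invoke Palm/Slivnyak--Mecke. Instead, as in the proof of Proposition~\ref{pro-Bay}, it first treats a single observation $X$ and, for each Borel set $A\subset\mathbb{R}\setminus\{0\}$, observes that the ordinary atoms whose emitted value lands in $A$ form an $H_{ord}(A\mid\theta)$-thinning of the weight process, hence a Poisson process with intensity $\nu(d\theta)H_{ord}(A\mid\theta)$; the conditional law of a weight, given its observed mark, is then read off by letting $A$ shrink, yielding the density proportional to $g(\theta)h_{ord}(x\mid\theta)$. The $m$-observation statement is obtained by \emph{induction}: one checks that $\Theta\mid X_1$ again satisfies $A00$, $A0$, $A1$, $A2'$ and reapplies the single-observation argument. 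Your route---mark once with all $m$ observations, split on ``all marks zero'' versus ``some nonzero'', and use the Palm formula to extract the weight density---is cleaner for the continuous case and delivers all three parts in one pass, at the cost of invoking a slightly heavier tool (Slivnyak--Mecke/disintegration on continuous marks). The paper's approach is more elementary but hides the same disintegration step in the passage from $H_{ord}(A\mid\theta)$ to $h_{ord}(x\mid\theta)$ and needs the extra induction. Both arguments use $A2'$ in the same place: to guarantee that the ``observed'' thinned process is a.s.\ finite.
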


\begin{example}[Gaussian]
	Let $\Theta$ have finitely many fixed atoms. Let the density of the random weights of the fixed atoms of $\Theta$ be Gaussian, that is
	\begin{equation*}
		f_{fix,k}(\theta)=\frac{1}{\sqrt{2\pi\sigma_{fix,k}^2}}e^{-(\mu_{fix,k}-\theta)^{2}/ 2\sigma_{fix,k}^{2}}.
	\end{equation*}
	Let $\nu$ be given by $\nu(d\theta)=|\theta|^{\alpha-2}d\theta$ for some $\alpha\in(0,1)$. Let $X$ as in Proposition \ref{pro-Bay-2}. In particular, let $H_{ord}(\{0\}|\theta)=1-|\theta|^{2-\alpha}e^{-\theta^2}$. We remark that instead of $e^{-\theta^2}$ we can consider any integrable function $l(\theta)$ such that $l(\theta)=o(|\theta|^{-2})$ and that $|\theta|^{2-\alpha}l(\theta)\in[0,1]$.
	Further, let the density of the random weights $x_{k}$ conditional on $\theta_{k}$ be given by
	\begin{align*}
		h_{ord}(x|\theta_{k})&=(1-H(\{0\}|\theta_{k}))\frac{1}{\sqrt{2\pi\sigma^2}}e^{-(x-\theta_{k})^{2}/ 2\sigma^{2}},\text{	when $x\neq0$ and $\psi_k$ is fixed, and}\\
		h_{fix}(x|\theta_{k})&=\frac{1}{\sqrt{2\pi\sigma^2}}e^{-(x-\theta_{k})^{2}/ 2\sigma^{2}},\text{ when $\psi_k$ is not fixed.}
	\end{align*}
	Then, assumption A0 is satisfied and A1 is also satisfied because $\int_{\mathbb{R}\setminus\{0\}}|\theta|^{\alpha-2}d\theta=\infty$. We remark that $\nu$ also satisfies the underlying condition $\int_{\mathbb{R}\setminus\{0\}}1\wedge|\theta|\nu(d\theta)<\infty$ for any $\alpha\in(0,1)$. Further, A2' is satisfied because by Tonelli's theorem
	\begin{equation*}
		\int_{\mathbb{R}\setminus\{0\}}\int_{\mathbb{R}\setminus\{0\}}h_{ord}(x|\theta)\nu(d\theta)dx=\int_{\mathbb{R}\setminus\{0\}}1-H_{ord}(\{0\}|\theta)\nu(d\theta)=\int_{\mathbb{R}\setminus\{0\}}e^{-\theta^2}d\theta<\infty.
	\end{equation*}
	Then, Proposition \ref{pro-Bay-2} states that the posterior $\Theta_{post}$ has a fixed component where the densities for the random weights of the prior fixed atoms are given by
	\begin{equation*}
		f_{post,fix,k}(\theta)\varpropto \frac{|\theta|^{2-\alpha}e^{-\theta^2}}{\sqrt{2\pi\sigma^2}}\frac{1}{\sqrt{2\pi\sigma_{fix,k}^2}}e^{-(\mu_{fix,k}-\theta)^{2}/ 2\sigma_{fix,k}^{2}}e^{-(x_{fix,k}-\theta)^{2}/ 2\sigma^{2}},
	\end{equation*}
	while the densities for the random weights of the new fixed atoms are given by
	\begin{equation*}
		f_{post,new,k}(\theta)\varpropto \frac{e^{-\theta^2}}{\sqrt{2\pi\sigma^2}}e^{-(x_{fix,k}-\theta)^{2}/ 2\sigma^{2}}.
	\end{equation*}
	The ordinary component of $\Theta_{post}$ has weight measure $\nu_{post}(d\theta)=(1-|\theta|^{2-\alpha}e^{-\theta^2}) |\theta|^{\alpha-2}d\theta$. Finally, it is possible to check that $\Theta_{post}$ has finitely many fixed atoms, that $\nu_{post}(d\theta)(\mathbb{R}\setminus\{0\})=\infty$, that $\int_{\mathbb{R}\setminus\{0\}}1\wedge|\theta|\nu_{post}(d\theta)<\infty$ , and that
	\begin{equation*}
		\int_{\mathbb{R}\setminus\{0\}}\int_{\mathbb{R}\setminus\{0\}}h_{ord}(x|\theta)\nu_{post}(d\theta)dx=\int_{\mathbb{R}\setminus\{0\}}e^{-\theta^2}(1-|\theta|^{2-\alpha}e^{-\theta^2})d\theta<\infty.
	\end{equation*}
\end{example}

\subsection{Sparse signed graphs}\label{Sec-graph}
A random signed graph is a random graph in which each edge (also known as link) is associated with a positive or negative sign. Models for nodes interacting over such random signed graphs arise from various biological, social, and economic systems, see \citet{Siam} for a review. In physics, signed graphs are a natural context for the nonferromagnetic Ising model, which is applied to the study of spin glasses (\citet{Ising}). Recently, there has been a surge of interest in graph neural networks triggered by the work \citet{Derr}. 

In this section we follow the approach of \citet{Caron-and-Fox} who considered the random graph as a point process. In particular, we generalize it and consider an undirected random signed graph as a signed point process:
\begin{equation}\label{Z}
	Z=\sum_{i=1}^\infty\sum_{j=1}^\infty z_{ij}\delta_{(\theta_i,\theta_j)}
\end{equation}
with $z_{ij}=z_{ji}\in\{-1,0,1\}$. Here, $z_{ij}=z_{ji}=1$ indicates a positive undirected edge between nodes $\theta_i$ and $\theta_j$ and $z_{ij}=z_{ji}=-1$ a negative one. In social networks, $z_{ij}$ may indicate the presence or absence of a connection between $\theta_i$ and $\theta_j$, and if a connection exists, it also specifies whether it is a positive or a negative one.  We introduce a collection of per-node parameters $w=\{w_i\}$ and specify link probabilities via
\begin{align*}
	\mathbb{P}(z_{ij}=1|w)=\begin{cases}
		1-\exp(-2w_i w_j) & \textnormal{if $i\neq j$, $w_i>0$ and $w_j>0$}\\	1-\exp(-w_i^2) & \textnormal{if $i= j$ and $w_i>0$,}
	\end{cases}\\
	\mathbb{P}(z_{ij}=-1|w)=\begin{cases}
		1-\exp(-2w_i w_j) & \textnormal{if $i\neq j$, $w_i<0$ and $w_j<0$}\\	1-\exp(-w_i^2) & \textnormal{if $i= j$ and $w_i<0$}
	\end{cases}
\end{align*}
Our generative model jointly specifies $(w_{i},\theta_i)_{i\in\mathbb{N}}$, where $w_i\in\mathbb{R}\setminus\{0\}$ and $\theta_{i}\in[0,\infty)$, using the CRSM without fixed atoms and deterministic component:
\begin{equation*}
	W=\sum_{i=1}^{\infty}w_i\delta_{\theta_i}.
\end{equation*}

As we have seen in Theorem \ref{thm-representation}, $W$ can be uniquely equivalently written as $W=\int_{\mathbb{R}\setminus\{0\}}x\Psi(B\times dx)$, $B\in\hat{\mathcal{S}}$, for some Poisson process on $S\times(\mathbb{R}\setminus\{0\})$ with intensity measure $F$ satisfying 	$\int_{\mathbb{R}\setminus\{0\}}1\wedge|x| F(B\times dx)<\infty$, for every $B\in\hat{\mathcal{S}}$, and $F(\{s\}\times (\mathbb{R}\setminus\{0\}))=0$ for every $s\in S$. Similarly as in \citet{Caron-and-Fox}, we assume that $F(dw,d\theta)=\rho(dw)\lambda(d\theta)$, where $\lambda$ is the Lebesgue measure on $[0,\infty)$ and $\rho$ is a L\'{e}vy measure on $\mathbb{R}\setminus\{0\}$. Since $W$ is a CRSM without fixed atoms, by Corollary \ref{co-two-CRSM} its Jordan decomposition are two independent CRMs, which we denote by $W_+$ and $W_-$.

Now, consider two atomic measures $D_+$ and $D_-$ on $[0,\infty)^2$ generated from a Poisson process
with intensity given by the product measure $W_+\times W_+$ and $W_-\times W_-$, respectively, that is $D_+|W_+\sim PP(W_+\times W_+)$ and $D_-|W_-\sim PP(W_-\times W_-)$, where we allow the mean measure to have atoms. 

We now make a crucial assumption: we assume that $D_+$ and $D_-$ are independent. This assumption is made for clarity of exposition as it allows to show how our theoretical results immediately generalise the model and results of \citet{Caron-and-Fox}. On the other hand, this assumption implies independence between the positive and negative edges, limiting the range of empirical applications.

The undirected signed graph model is viewed as a transformation of a directed integer-weighted graph, or multigraph, which we represent as an atomic signed measure $D$ on $[0,\infty)^2$ given by the difference of $D_+$ and $D_-$:
\begin{equation*}
	D=D_+-D_-=\sum_{i=1}^\infty\sum_{j=1}^\infty n_{ij}\delta_{(\theta_i,\theta_j)}.
\end{equation*}
Thus, informally, $n_{ij}\sim Poisson (w_i w_j)$ when $w_i,w_j>0$, $-n_{ij}\sim Poisson (w_i w_j)$  when $w_i,w_j<0$, and $n_{ij}=0$ otherwise. Note that $n_{ij}\stackrel{d}{=}n_{ji}$ and they have the same sign almost surely. Then, by setting $z_{ij}=sign(n_{ij}+n_{ji})\min(|n_{ij}+n_{ji}|,1)$, where $sign(\cdot)$ is the sign function, we obtain the undirected signed graph model $Z$ in $(\ref{Z})$ and the resulting hierarchical model is
\begin{equation}\label{hierarchical}
	\begin{cases}
		W=\sum_{i=1}^{\infty}w_i\delta_{\theta_i}, \quad\quad\quad\quad\quad\quad\quad\quad\quad\quad\,\,\,\,\, W\sim CRSM(\rho,\lambda)\\
		D=\sum_{i=1}^\infty\sum_{j=1}^\infty n_{ij}\delta_{(\theta_i,\theta_j)}=D_+-D_-, \quad\,  D_\pm\sim PP(W_\pm\times W_\pm)\\
		Z=\sum_{i=1}^\infty\sum_{j=1}^\infty sign(n_{ij}+n_{ji})\min(|n_{ij}+n_{ji}|,1)\delta_{(\theta_i,\theta_j)},
	\end{cases}
\end{equation}
where $D_\pm\sim PP(W_\pm\times W_\pm)$ stands for both $D_+\sim PP(W_+\times W_+)$ and $D_-\sim PP(W_-\times W_-)$. We can naturally generalize the definition of exchangeable random measures to the signed case.
\begin{definition}
	[exchangeable random signed measure] A random signed measure $\xi$ on $\mathbb{R}^d$ is separate exchangeable if and only if 
	\begin{equation*}
		(	\xi(A_{i_1}\times\cdots\times A_{i_d}))\stackrel{d}{=}(	\xi(A_{\pi_1(i_1)}\times\cdots\times A_{\pi_d(i_d)}))\quad \textnormal{for $(i_1,...,i_d)\in\mathbb{N}^d$}
	\end{equation*}
	for any permutations $\pi_1,...,\pi_d$ of $\mathbb{N}$ and any intervals $A_j=[h(1-j),hj]$ with $j\in\mathbb{N}$ and $h>0$. When $\pi_1=...=\pi_d$, $\xi$ is called jointly exchangeable.
\end{definition}
\begin{proposition}\label{pro-exchange}
	For any $W\sim CRSM(\rho,\lambda)$, the point process $Z$ defined by equation $(\ref{hierarchical})$ is jointly exchangeable.
\end{proposition}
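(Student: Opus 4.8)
The plan is to realise each relabelling of blocks by a measure-preserving bijection of the location space and then show that the whole generative mechanism $W\mapsto(D_+,D_-)\mapsto Z$ is equivariant under the corresponding product map, so that invariance of the law of $W$ transfers to $Z$. Fix a permutation $\pi$ of $\mathbb{N}$. Since the blocks $A_j$ partition $[0,\infty)$ into intervals of equal length $h$, there is a measurable bijection $T_\pi:[0,\infty)\to[0,\infty)$ that sends $A_i$ onto $A_{\pi(i)}$ by translation; being a piecewise translation between equal-length intervals, $T_\pi$ preserves Lebesgue measure $\lambda$. By Theorem \ref{thm-representation}, the pairs $(\theta_i,w_i)$ are the points of a Poisson process $N$ on $[0,\infty)\times(\mathbb{R}\setminus\{0\})$ with intensity $\lambda\otimes\rho$. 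Applying the Poisson mapping theorem to the map $(\theta,w)\mapsto(T_\pi\theta,w)$, and using $(T_\pi)_*\lambda=\lambda$, the image process has the same intensity $\lambda\otimes\rho$; hence the pushforward $W^\pi:=(T_\pi)_*W$ satisfies $W^\pi\stackrel{d}{=}W$ as a random signed measure.

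Next I would establish equivariance of the downstream construction under $T_\pi\times T_\pi$. By Corollary \ref{co-two-CRSM} the Jordan decomposition $W_+,W_-$ consists of independent CRMs, and pushing forward commutes with this decomposition, so $W^\pi_\pm=(T_\pi)_*W_\pm$ and therefore $W^\pi_\pm\times W^\pi_\pm=(T_\pi\times T_\pi)_*(W_\pm\times W_\pm)$. Conditioning on $W$ and invoking the Poisson mapping theorem once more, the processes $D_\pm^\pi\sim PP(W^\pi_\pm\times W^\pi_\pm)$ satisfy $D_\pm^\pi\stackrel{d}{=}(T_\pi\times T_\pi)_*D_\pm$; since $D_+$ and $D_-$ are assumed independent, this identity holds jointly, whence $D^\pi=D_+^\pi-D_-^\pi\stackrel{d}{=}(T_\pi\times T_\pi)_*D$. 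The final functional $\Phi:D\mapsto Z$ with $z_{ij}=sign(n_{ij}+n_{ji})\min(|n_{ij}+n_{ji}|,1)$ maps atoms to atoms and is built only from the unordered pair $\{(\theta_i,\theta_j),(\theta_j,\theta_i)\}$; because $T_\pi\times T_\pi$ is a bijection that commutes with the coordinate swap on $[0,\infty)^2$, it intertwines with $\Phi$, i.e.\ $\Phi\big((T_\pi\times T_\pi)_*D\big)=(T_\pi\times T_\pi)_*\Phi(D)$. Combining the three displays gives $Z^\pi\stackrel{d}{=}(T_\pi\times T_\pi)_*Z$, while the law-invariance $W^\pi\stackrel{d}{=}W$ together with the fact that the whole construction is the same measurable kernel gives $Z^\pi\stackrel{d}{=}Z$; hence $(T_\pi\times T_\pi)_*Z\stackrel{d}{=}Z$.

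It then remains to read this off on the blocks. Using $(T_\pi)_*Z(A)=Z(T_\pi^{-1}A)$ and $T_\pi^{-1}A_i=A_{\pi^{-1}(i)}$, one has $(T_\pi\times T_\pi)_*Z(A_i\times A_j)=Z(A_{\pi^{-1}(i)}\times A_{\pi^{-1}(j)})$, so the invariance just proved yields $(Z(A_{\pi^{-1}(i)}\times A_{\pi^{-1}(j)}))\stackrel{d}{=}(Z(A_i\times A_j))$; since $\pi$ is arbitrary we may replace $\pi^{-1}$ by $\pi$, which is precisely the defining identity of joint exchangeability (the same permutation acting on both coordinates). The step I expect to be the main obstacle is the joint-distribution bookkeeping in the middle paragraph: one must verify that the two applications of the mapping theorem (first for $W$, then conditionally for $D_\pm$) can be assembled into a single equality in distribution for the pair $(W,D)$, rather than only marginal identities, and that the symmetrising functional $\Phi$ genuinely commutes with $T_\pi\times T_\pi$; the independence assumption on $D_+,D_-$ and the equal length of the blocks (which alone guarantees $(T_\pi)_*\lambda=\lambda$) are exactly what make this bookkeeping go through.
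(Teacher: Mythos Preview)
Your argument is correct and self-contained, but it takes a genuinely different route from the paper. The paper's proof is a two-line reduction: it observes that $Z=Z_+-Z_-$, where $Z_\pm$ is exactly the Caron--Fox hierarchical model driven by the CRM $W_\pm$ (with multigraph $D_\pm$), notes that $Z_+$ and $Z_-$ are independent (since $W_+\perp W_-$ by Corollary~\ref{co-two-CRSM} and $D_+\perp D_-$ by assumption), invokes Proposition~1 of Caron--Fox to get joint exchangeability of each $Z_\pm$, and concludes that the difference of two independent jointly exchangeable random measures is jointly exchangeable.

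By contrast, you prove exchangeability from scratch via an equivariance argument: build the Lebesgue-preserving block permutation $T_\pi$, push the Poisson representation of $W$ through it, and track the generative kernel $W\mapsto D\mapsto Z$ to obtain $(T_\pi\times T_\pi)_*Z\stackrel{d}{=}Z$. This is essentially a direct reproof of the Caron--Fox result in the signed setting, and it makes transparent \emph{why} the Lebesgue base measure $\lambda$ is needed (it is the only place where $(T_\pi)_*\lambda=\lambda$ enters). The paper's approach is shorter and exploits the decomposition machinery (Corollary~\ref{co-two-CRSM}) developed earlier; yours is more elementary and does not require the reader to consult an external reference. The joint-distribution bookkeeping you flag as the delicate step is handled cleanly once one notes that $T_\pi$ is a bijection, so conditioning on $W$ and on $W^\pi=(T_\pi)_*W$ are the same thing, and the conditional law of $(T_\pi\times T_\pi)_*D_\pm$ given $W$ is exactly $PP(W^\pi_\pm\times W^\pi_\pm)$ by the mapping theorem.
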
 

We are now ready to discuss the sparsity properties of our random signed graph model. Let $\alpha>0$ and let $Z_\alpha$ the restriction of $Z$ on the square $[0,\alpha]^2$. Let $N_\alpha$ be the number of observed nodes, that is the number of nodes with degree at least one, and let $N_\alpha^{(e)}$ be the number of edges in this restricted network.  Precisely,
\begin{align*}
	N_\alpha&=\textnormal{card}\{\theta_i\in[0,\alpha]:Z(\{\theta_i\}\times[0,\alpha])\neq 0\},\\
	N_\alpha^{(e)}&=|Z|(\{(x,y)\in[0,\infty)^2: 0\leq x\leq y\leq\alpha).
\end{align*}

\begin{proposition}\label{pro-sparse}
	Consider the point process $Z$ in $(\ref{hierarchical})$. Assume that $\int_{\mathbb{R}\setminus\{0\}}|w|\rho(dw)<\infty$. If the CRSM W has finite activity, \textit{i.e.}~$\int_{\mathbb{R}\setminus\{0\}}\rho(dw)<\infty$, then the number of edges scales quadratically with the number of observed nodes, that is $N_{\alpha}^{(e)}=\Theta(N_{\alpha}^2)$, almost surely as $\alpha\to\infty$, implying that the graph is dense. If the CRSM $W$ has infinite activity, \textit{i.e.}~$\int_{\mathbb{R}\setminus\{0\}}\rho(dw)=\infty$, then the number of edges scales subquadratically with the number of observed nodes, that is $N_{\alpha}^{(e)}=o(N_{\alpha}^2)$, almost surely as $\alpha\to\infty$, implying that the graph is sparse.
\end{proposition}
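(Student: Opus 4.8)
The plan is to reduce the statement to the single-colour sparsity results of \citet{Caron-and-Fox} by exploiting the Jordan decomposition of $W$. Since $W\sim CRSM(\rho,\lambda)$ has no fixed atoms, Corollary~\ref{co-two-CRSM} tells us that its Jordan decomposition $W=W_+-W_-$ consists of two \emph{independent} atomless CRMs, where $W_+$ is driven by the restriction $\rho_+:=\rho|_{(0,\infty)}$ and $W_-$ by the reflection of $\rho|_{(-\infty,0)}$, which we denote $\rho_-$; both have finite first moment $m_\pm:=\int_0^\infty w\,\rho_\pm(dw)<\infty$ by the hypothesis $\int_{\mathbb{R}\setminus\{0\}}|w|\rho(dw)<\infty$. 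The positive and negative atoms of $W$ sit at distinct locations, so $D_+\mid W_+\sim PP(W_+\times W_+)$ charges only pairs of positive-weight nodes and $D_-\mid W_-\sim PP(W_-\times W_-)$ only pairs of negative-weight nodes; consequently $n_{ij}=n^+_{ij}-n^-_{ij}$ never has both terms nonzero, whence $z_{ij}\in\{0,1\}$ on positive pairs and $z_{ij}\in\{0,-1\}$ on negative pairs. Because $W_+\perp W_-$ (Corollary~\ref{co-two-CRSM}) and $D_+\perp D_-$ by assumption, the positive edges of $Z$ form exactly a Caron--Fox graph driven by $W_+$ and the negative edges form an \emph{independent} Caron--Fox graph driven by $W_-$.

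First I would record the resulting additivity. Writing $N_\alpha^{\pm}$ and $N_\alpha^{(e),\pm}$ for the observed-node and edge counts of the two colour subgraphs on $[0,\alpha]^2$, the disjointness of the two node sets and the identity $|z_{ij}|=\mathbf{1}\{(i,j)\text{ connected}\}$ give $N_\alpha=N_\alpha^{+}+N_\alpha^{-}$ and $N_\alpha^{(e)}=N_\alpha^{(e),+}+N_\alpha^{(e),-}$ exactly, with no double counting since a node is either positive or negative. Next I would invoke the single-colour asymptotics. Each $W_\pm([0,\alpha])$ is a subordinator with Lévy measure $\rho_\pm$, so the strong law for subordinators yields $W_\pm([0,\alpha])/\alpha\to m_\pm$ a.s. Conditioning on $W_\pm$ the edges are Poisson, and a Campbell/Mecke computation gives $\mathbb{E}[N_\alpha^{(e),\pm}]\sim\tfrac12\alpha^2\iint(1-e^{-2ww'})\rho_\pm(dw)\rho_\pm(dw')$, a finite positive multiple of $\alpha^2$ whenever $\rho_\pm\neq0$, so that $N_\alpha^{(e),\pm}=\Theta(\alpha^2)$ a.s.; meanwhile $N_\alpha^{\pm}=\Theta(\alpha)$ when $\rho_\pm$ is \emph{finite} and $N_\alpha^{\pm}=\omega(\alpha)$, i.e.\ $N_\alpha^{\pm}/\alpha\to\infty$, when $\rho_\pm$ is \emph{infinite}. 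Degenerate colours ($\rho_\pm=0$) contribute zero and are harmless.

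It then remains to combine the two cases. If $W$ has finite activity, $\int_{\mathbb{R}\setminus\{0\}}\rho(dw)<\infty$ forces both $\rho_+$ and $\rho_-$ finite, so $N_\alpha=\Theta(\alpha)$ and $N_\alpha^{(e)}=\Theta(\alpha^2)$, whence $N_\alpha^{(e)}=\Theta(N_\alpha^2)$ and the graph is dense. If $W$ has infinite activity, at least one of $\rho_+,\rho_-$ is infinite; the corresponding colour has $N_\alpha^{\pm}=\omega(\alpha)$, which dominates the sum and forces $N_\alpha=\omega(\alpha)$, while $N_\alpha^{(e)}=N_\alpha^{(e),+}+N_\alpha^{(e),-}=\Theta(\alpha^2)$ still holds; thus $N_\alpha^2=\omega(\alpha^2)$ against $N_\alpha^{(e)}=\Theta(\alpha^2)$, giving $N_\alpha^{(e)}=o(N_\alpha^2)$ and sparsity. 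The main obstacle is not the combination but the single-colour node-count asymptotics in the infinite-activity regime: upgrading $\mathbb{E}[N_\alpha^{\pm}\mid W_\pm]\approx\int_0^\infty(1-e^{-2wW_\pm([0,\alpha])})\,\alpha\,\rho_\pm(dw)$ — which diverges strictly faster than $\alpha$ exactly when $\rho_\pm((0,\infty))=\infty$ — to an \emph{almost sure} statement $N_\alpha^{\pm}/\alpha\to\infty$ requires a conditional Poisson concentration (Borel--Cantelli along integer $\alpha$ with monotonicity filling the gaps) coupled with the subordinator law of large numbers for $W_\pm([0,\alpha])$. This is precisely the technical core already established by \citet{Caron-and-Fox} for a single CRM, so the cleanest route is to quote their theorems for $W_+$ and $W_-$ separately and assemble the conclusion through the additivity above.
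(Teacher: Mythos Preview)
Your proposal is correct and mirrors the paper's proof: both decompose $Z$ into the two independent Caron--Fox graphs $Z_\pm$ built from $W_\pm$, record the additivity $N_\alpha=N_\alpha^{+}+N_\alpha^{-}$ and $N_\alpha^{(e)}=N_\alpha^{(e),+}+N_\alpha^{(e),-}$, and then invoke the single-colour sparsity theorem of \citet{Caron-and-Fox} on each piece before combining. The only minor difference is that you route the combination through the intermediate $\alpha$-scale (arguing $N_\alpha^{(e),\pm}=\Theta(\alpha^2)$ against $N_\alpha^{\pm}=\Theta(\alpha)$ or $\omega(\alpha)$), which makes the mixed infinite-activity case---where only one of $\rho_+,\rho_-$ has infinite mass---more explicit than the paper's one-line passage from ``$N_{\alpha,+}^{(e)}=o(N_{\alpha,+}^2)$ or $N_{\alpha,-}^{(e)}=o(N_{\alpha,-}^2)$'' to ``$N_\alpha^{(e)}=o(N_\alpha^2)$''.
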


The model in \citet{Caron-and-Fox} has been extended in various directions (\textit{e.g.}~\citet{CaronMiscouridou} and \citet{RicciGuindaniS22}). We remark that the same can be done for our random signed graph model $(\ref{hierarchical})$.

\subsection{Mean function estimation in nonparametric regression}\label{Sec-Mean}
One of the Bayesian nonparametric approaches to the sparse regression problem based on mixtures relies on the idea that the regression function can be modeled as
\begin{equation*}
	f(\cdot)=\int_SK(x;\cdot)\xi(dx),\quad \xi\sim\Gamma,
\end{equation*}
where $K:S\times\mathbb{R}^d\to\mathbb{R}$ is a jointly measurable kernel function, and $\Gamma$ is a prior distribution on $\mathsf{M}_{S}$, thus $\xi$ is a random measure on $S$. This framework emerged in the works \citet{DeBlasiPeccatiPrunster,GhosalVanDerVaart2007,IshwaranJames,LijoiNipoti,LoWeng1989,PeccatiPrunster09}. The case of $\xi$ being real valued and with independent increments is explored in \citet{Naulet} and in \citet{Wolpert}. 

Our results open the possibility to investigate the general case of $\xi$ being a random signed measures. For example, we can leverage on Lemma \ref{lem-Jordan}, the Jordan decomposition of $\xi$, to immediately obtain the integrability and measurability conditions for $K$. Indeed, it is possible to see that $K$ is integrable and measurable if it is integrable and measurable for $\xi^+$ and for $\xi^-$ (or simply for $|\xi|$).

Even in the case of $\xi$ having independent increments our results provide a contribution. In \citet{Wolpert} the authors focus on independently scattered random measures (i.s.r.m.) on a complete separable metric space. In the uncompensated case, they focus on a specific class of CRSMs, namely the ones which are constructed starting from a Poisson random measure $\mathcal{N}$ and are of the form $\xi(A)=\int_{\mathbb{R}}\int_A \beta \mathcal{N}(d\beta ds)$ where $A\in\hat{\mathcal{S}}$. Thanks to our results, we can immediately extend all the results in \citet{Wolpert} for the uncompensated case to hold for every CRSMs. Indeed, by Theorem \ref{thm-representation} any CRSMs has an almost sure representation of the form $\xi(A)=\int_{\mathbb{R}}\int_A \beta \mathcal{N}(d\beta ds)$.

Moreover, in \citet{Naulet} the authors focus on the case of $\xi$ being a symmetric Gamma random measure. They considered it as a pre-kernel, precisely as a map from $\Omega\times \mathcal{S}\mapsto \mathbb{R}\pm\{\infty\}$ such that, for fixed $\omega\in\Omega$, $\xi(\omega,\cdot)$ is a signed measure on $\mathcal{S}$ and, for fixed $B\in\mathcal{S}$, $\eta(\cdot, B)$ is a $\mathcal{F}$-measurable function. The authors state that the existence of such a random measure is given by \citet{RajRos}. However, in \citet{RajRos} the authors focus on i.s.r.m, which dot not necessarily have a kernel-like representation. Therefore, the existence (and uniqueness) of such random signed measure is established solely by Lemma \ref{lem-kernel}.

We also remark that in \citet{Naulet} the authors mentioned that finite dimensional distributions convergence is equivalent to convergence in distribution for random signed measures. They rely on Theorem 4.2 in \citet{Kallenberg0}. However, Theorem 4.2 in \citet{Kallenberg0} relies on the fact that the space of measures endowed with the vague topology is Polish (see 15.7.7 in \citet{Kallenberg0}), but it is known that the vague topology is not metrizable on the space of signed Radon measures, namely on $\mathsf{M}_{S}$ (as they also correctly state it in Section S1.1). 

Thus, the equivalence between finite dimensional distributions and convergence in distribution for random signed measures remains an important open problem with immediate applications.

\section{Conclusion}

In this paper, we introduced random signed measures, a novel class of statistical models, which naturally generalize random measures. We studied their properties, explored various examples, and investigated their role in Bayesian analysis through three applications. These applications are primarily based on existing models for random measures, demonstrating how these established models can be extended to a broader, real-valued framework. Nonetheless, we anticipate the emergence of new application domains, particularly those involving the Gaussian random measure and the Skellam point process. Moreover, in this paper we laid down the foundations of the theory of random signed measures, but many important theoretical questions remain open, including the equivalence between finite-dimensional distributions and convergence in distribution for random signed measures.
We leave these exciting applied and theoretical directions for further research.

\section*{Appendix}
\subsection*{Preliminaries}
There are some concepts that we use in the proofs that we do not introduce here, like dissection system, monotone class, rings, etc... These concepts are well explained in the two main references used in this paper, namely \cite{Kallenberg2} and \cite{Daley1,Daley}. In our proofs we use the following results of \cite{Pass}.
\begin{lemma}[Lemma 2.15 in \cite{Pass}]\label{215}
	Let $X$ be an arbitrary non-empty set and let $\mathcal{R}$ be a $\delta$-ring. Then, for every $E\in\mathcal{R}$ we have that $\{E\cap B:B\in\mathcal{R}\}$ is a $\sigma$-algebra.
\end{lemma}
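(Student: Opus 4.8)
The plan is to verify directly the three defining properties of a $\sigma$-algebra on the ground set $E$ for the collection $\mathcal{A}_E := \{E\cap B : B\in\mathcal{R}\}$, exploiting the fact that a $\delta$-ring is closed under finite unions, set differences, and \emph{countable} intersections.

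First I would check that $E\in\mathcal{A}_E$: since $E\in\mathcal{R}$, taking $B=E$ gives $E=E\cap E\in\mathcal{A}_E$, and this $E$ serves as the full space. Next, for closure under relative complementation, take any $A=E\cap B\in\mathcal{A}_E$ with $B\in\mathcal{R}$. The complement of $A$ within $E$ is $E\setminus A = E\setminus B$, and since $\mathcal{R}$ is closed under set differences we have $E\setminus B\in\mathcal{R}$; writing $E\setminus A = E\cap(E\setminus B)$ exhibits it as an element of $\mathcal{A}_E$.

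The only genuinely delicate step is closure under countable unions, and this is where the $\delta$-ring hypothesis (rather than a mere ring) is essential. Given $A_n=E\cap B_n$ with $B_n\in\mathcal{R}$, the naive identity $\bigcup_n A_n = E\cap\bigcup_n B_n$ is useless because a ring need not be closed under countable unions. Instead I would pass to complements inside $E$ via De Morgan:
\begin{equation*}
\bigcup_{n} A_n = E\cap\bigcup_{n} B_n = E\setminus\bigcap_{n}(E\setminus B_n).
\end{equation*}
Each $E\setminus B_n$ lies in $\mathcal{R}$ by closure under differences, so their countable intersection $C:=\bigcap_n(E\setminus B_n)$ lies in $\mathcal{R}$ precisely because $\mathcal{R}$ is a $\delta$-ring. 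Then $E\setminus C\in\mathcal{R}$ again by closure under differences, and hence $\bigcup_n A_n = E\cap(E\setminus C)\in\mathcal{A}_E$.

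The main obstacle, and indeed the whole point of the lemma, is this last maneuver: a ring alone fails here, and the fixed set $E$ together with the $\delta$-ring's closure under countable intersections is exactly what converts an uncontrollable countable union into a countable intersection that stays inside $\mathcal{R}$. Once these three closure properties are established, $\mathcal{A}_E$ is by definition a $\sigma$-algebra on $E$, which completes the proof.
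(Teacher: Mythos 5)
Your argument is correct: the three closure properties are verified cleanly, and the key step --- converting the countable union into a countable intersection of the differences $E\setminus B_n$ via De Morgan relative to $E$, which is exactly where the $\delta$-ring hypothesis enters --- is the right maneuver. The paper itself does not reprove this lemma but imports it from \cite{Pass}; your proof is the standard self-contained argument one would expect there, so nothing further is needed.
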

\begin{lemma}[Lemma 2.16 in \cite{Pass}]\label{lem-vecchio-2}
	Let $X$ an arbitrary non-empty set and let $\mathcal{R}$ a $\delta$-ring. Let $\mu$ be a (possibly infinite) signed measure on $\mathcal{R}$. Then, there exist two unique measures $\mu^{+}$ and $\mu^{-}$ on $\mathcal{R}$ such that $\mu=\mu^{+}-\mu^{-}$ and that on any $A\in\mathcal{R}$ they are mutually singular.
\end{lemma}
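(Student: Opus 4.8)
The plan is to reduce the statement, which concerns a $\delta$-ring, to the classical Hahn--Jordan decomposition on a $\sigma$-algebra, which becomes available only after localizing. First I would fix an arbitrary $E \in \mathcal{R}$ and pass to the trace class $\mathcal{R}_E := \{E \cap B : B \in \mathcal{R}\} = \{C \in \mathcal{R} : C \subseteq E\}$. By Lemma \ref{215} this is a $\sigma$-algebra on $E$, so the restriction $\mu|_{\mathcal{R}_E}$ is an ordinary (possibly infinite) signed measure on the measurable space $(E, \mathcal{R}_E)$. The classical theorem then furnishes a Hahn decomposition $E = P_E \sqcup N_E$ with $P_E, N_E \in \mathcal{R}_E$, $P_E$ positive and $N_E$ negative, together with the mutually singular measures $\mu(\,\cdot \cap P_E)$ and $-\mu(\,\cdot \cap N_E)$ recombining to $\mu$ on $\mathcal{R}_E$. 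Since $\mu$ is a signed measure it omits at least one of the values $\pm\infty$, so the same holds for each restriction and the classical result genuinely applies.

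Next, to glue these local pieces into globally defined set functions without having to choose a compatible family of Hahn sets, I would define directly
\begin{equation*}
  \mu^+(A) := \sup\{\mu(B) : B \in \mathcal{R},\ B \subseteq A\}, \qquad \mu^-(A) := -\inf\{\mu(B) : B \in \mathcal{R},\ B \subseteq A\}, \qquad A \in \mathcal{R}.
\end{equation*}
These are manifestly independent of any auxiliary choice, and taking $B = \emptyset$ shows $\mu^\pm \geq 0$. The crucial step is to identify them with the local parts: on $\mathcal{R}_A$ one checks $\mu(B) = \mu(B \cap P_A) + \mu(B \cap N_A) \leq \mu(P_A)$ for every $B \subseteq A$, so the supremum is attained at $B = P_A$ (and the infimum at $N_A$), whence $\mu^+(A) = \mu(A \cap P_A)$ and $\mu^-(A) = -\mu(A \cap N_A)$. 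This identification transports every property of the classical decomposition from $\mathcal{R}_A$ to the global set functions.

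Then I would verify the three required properties by localization. Countable additivity is the key structural point: given disjoint $A_1, A_2, \ldots \in \mathcal{R}$ with $A = \bigcup_k A_k \in \mathcal{R}$, all the $A_k$ lie in the single $\sigma$-algebra $\mathcal{R}_A$, on which $\mu^+$ coincides with the genuine measure $\mu(\,\cdot \cap P_A)$; countable additivity on $\mathcal{R}_A$ then yields $\mu^+(A) = \sum_k \mu^+(A_k)$, and likewise for $\mu^-$. The relation $\mu = \mu^+ - \mu^-$ and the mutual singularity on each $A$ (witnessed by $P_A, N_A$, since $\mu^+(N_A) = 0 = \mu^-(P_A)$) are read off from the corresponding facts on each trace $\sigma$-algebra. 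Finally, uniqueness follows the same localization: any competing pair $\nu^+, \nu^-$ of measures with $\mu = \nu^+ - \nu^-$ that are mutually singular on each $A$ restricts, for fixed $A$, to a Jordan decomposition on the $\sigma$-algebra $\mathcal{R}_A$, which is unique by the classical theorem; hence $\nu^\pm = \mu^\pm$ on every $\mathcal{R}_A$ and therefore on all of $\mathcal{R}$.

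The main obstacle is precisely the absence of a global Hahn decomposition: since $\mathcal{R}$ is only a $\delta$-ring, the ambient space $X$ need not belong to $\mathcal{R}$, so there is no single positive/negative partition of $X$ and one cannot invoke the $\sigma$-algebra theorem once and for all. The choice-free sup/inf definition is what circumvents having to check by hand the consistency of a family of local Hahn sets across different $E$; the remaining care is bookkeeping around the possibly infinite values, ensuring that on each trace $\sigma$-algebra $\mu$ is still a legitimate signed measure so that the classical decomposition applies verbatim.
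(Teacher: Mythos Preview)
The paper does not supply its own proof of this lemma: it is quoted verbatim from \cite{Pass} in the Preliminaries subsection of the Appendix and used as a black box. So there is no ``paper's proof'' to compare against.

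Your argument is correct and is the standard route to the Jordan decomposition on a $\delta$-ring. The key ingredients are exactly right: Lemma~\ref{215} gives you a genuine $\sigma$-algebra on each trace $\mathcal{R}_E$, the classical Hahn--Jordan theorem applies there, and the choice-free formulas $\mu^+(A)=\sup_{B\subseteq A}\mu(B)$, $\mu^-(A)=-\inf_{B\subseteq A}\mu(B)$ glue the local decompositions without having to verify compatibility of Hahn sets across different $E$. One small point worth making explicit in your write-up: when you assert that on $\mathcal{R}_A$ the global $\mu^+$ agrees with $\mu(\,\cdot\cap P_A)$, you are implicitly using that for any $C\subseteq A$ the sets $P_A\cap C$ and $N_A\cap C$ form a Hahn decomposition of $C$, so that $\mu^+(C)=\sup_{B\subseteq C}\mu(B)=\mu(C\cap P_A)$; this is the step that makes the countable-additivity argument go through, and it deserves one line. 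Otherwise the proof is complete.
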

\begin{lemma}[Lemma 2.18 in \cite{Pass}]\label{lem-vecchio-3}
	Let $X$ an arbitrary non-empty set and let $\mathcal{R}$ a $\delta$-ring. Let $\mu$ be a $\sigma$-finite signed measure on $\mathcal{R}$ (namely there exists a sequence $S_{1},S_{2},...\in\mathcal{R}$ s.t.~$X=\cup_{n=1}^{\infty}S_{n}$ and that $-\infty<\mu(S_{n})<\infty$ for every $n\in\mathbb{N}$). Then $\mu^{+}$ and $\mu^{-}$ can be uniquely extended to two $\sigma$-finite measures on $(X,\sigma(\mathcal{R}))$.
\end{lemma}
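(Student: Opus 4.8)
The plan is to construct the extensions of $\mu^+$ and $\mu^-$ explicitly from the exhausting sequence and then read off uniqueness from the fact that $\mathcal R$ is a generating $\pi$-system; I describe the argument for $\mu^+$, the one for $\mu^-$ being identical. First I would replace $S_1,S_2,\dots$ by their partial unions $\bigcup_{k\le n}S_k$, which again lie in $\mathcal R$ (a ring is closed under finite unions) and still have finite $\mu$-value, so that we may assume $S_n\uparrow X$. The structural engine is Lemma \ref{215}: for each $n$ the trace $\mathcal R_n:=\{S_n\cap B:B\in\mathcal R\}$ is a $\sigma$-algebra on $S_n$. Since for any $B\in\mathcal R_n$ we have $\mu(S_n)=\mu(B)+\mu(S_n\setminus B)$ with $\mu(S_n)$ finite, the restriction $\mu|_{\mathcal R_n}$ is a \emph{finite} signed measure, and Hahn--Jordan forces $\mu^+(S_n),\mu^-(S_n)<\infty$. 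In particular $\mu^+$ and $\mu^-$ (obtained from Lemma \ref{lem-vecchio-2}) are $\sigma$-finite on $\mathcal R$.

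Next I would observe the trace identity $\sigma(\mathcal R)\cap S_n=\sigma(\mathcal R\cap S_n)=\sigma(\mathcal R_n)=\mathcal R_n$, the last step because $\mathcal R_n$ is already a $\sigma$-algebra. Hence for every $A\in\sigma(\mathcal R)$ we have $A\cap S_n\in\mathcal R_n\subseteq\mathcal R$, so $\mu^+(A\cap S_n)$ is defined and finite, and I can set $\bar\mu^+(A):=\lim_{n\to\infty}\mu^+(A\cap S_n)$, the limit existing in $[0,\infty]$ because $A\cap S_n\uparrow A$. One then checks that $\bar\mu^+$ is a measure on $\sigma(\mathcal R)$ ($\bar\mu^+(\emptyset)=0$, and countable additivity over disjoint $A^{(1)},A^{(2)},\dots$ follows by interchanging the two monotone limits, which is legitimate as all terms are nonnegative), that it extends $\mu^+$ (for $A\in\mathcal R$, continuity from below of $\mu^+$ on the ring gives $\mu^+(A\cap S_n)\to\mu^+(A)$), and that it is $\sigma$-finite (since $\bar\mu^+(S_n)=\mu^+(S_n)<\infty$ and $S_n\uparrow X$). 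The same recipe produces $\bar\mu^-$.

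For uniqueness I would use that $\mathcal R$ is closed under finite intersections, hence a $\pi$-system generating $\sigma(\mathcal R)$, together with the $\sigma$-finite exhaustion $S_n\uparrow X$ on which every candidate extension is finite and agrees with $\mu^+$; a standard Dynkin $\pi$--$\lambda$ argument on each $S_n$, followed by letting $n\to\infty$, shows any two such extensions coincide.

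The genuine obstacle is not the extension itself---one could equally invoke Carath\'{e}odory's theorem, since a $\delta$-ring is a ring and $\mu^{\pm}$ are pre-measures on it---but rather the two places where the $\delta$-ring hypothesis is essential: securing $\sigma$-finiteness of the \emph{Jordan parts} (not merely of $\mu$) and guaranteeing that $A\cap S_n$ falls back inside $\mathcal R$ for every $A\in\sigma(\mathcal R)$. Both rest squarely on Lemma \ref{215}. The only remaining bookkeeping is justifying the double-limit interchange in the proof of countable additivity, which is routine by monotone convergence given nonnegativity.
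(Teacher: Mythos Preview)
The paper does not supply its own proof of this lemma: it is quoted verbatim as Lemma 2.18 from \cite{Pass} in the preliminaries and used as a black box. So there is nothing in the present paper to compare your argument against.

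That said, your proof is correct and is essentially the standard one. The two genuinely nontrivial points are exactly the ones you flag: (a) that $\mu^{\pm}$ inherit $\sigma$-finiteness from $\mu$, and (b) that $A\cap S_n$ lands back in $\mathcal R$ for every $A\in\sigma(\mathcal R)$. Both are handled cleanly via Lemma \ref{215} and the trace identity $\sigma(\mathcal R)\cap S_n=\sigma(\mathcal R_n)=\mathcal R_n$. One small remark: when you pass to the increasing sequence $\tilde S_n=\bigcup_{k\le n}S_k$, you implicitly use that $\mu(\tilde S_n)$ is again finite; this follows because on the $\sigma$-algebra $\mathcal R_n$ the restriction of $\mu$ is a finite signed measure (your own argument), so any subset of $S_n$ in $\mathcal R$ has finite $\mu$-value, and then finite additivity does the rest. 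It would not hurt to say this explicitly. The alternative you mention---applying Carath\'eodory directly to the pre-measures $\mu^{\pm}$ on the ring $\mathcal R$---also works and is arguably shorter, but your explicit construction has the virtue of making transparent why the $\delta$-ring hypothesis (via Lemma \ref{215}) is doing real work.
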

From the above it is possible to see that $|\mu|:=\mu^{+}+\mu^{-}$ is the total variation of $\mu$. Moreover we note that $\hat{\mathcal{S}}$ is closed under countable intersections (see page 19 in \cite{Kallenberg2}), hence $\hat{\mathcal{S}}$ is a $\delta$-ring. Further recall that every $\sigma$-ring is a $\delta$-ring.

\subsection*{Proof of Section \ref{Sec-measure}}

\begin{proof}[Proof of Lemma \ref{lem-measurable-vector-space}]
	It is immediate to see that $\mathsf{M}_{S}$ is a vector space. Moreover, it is sufficient to prove measurability with respect to the generator of $\mathcal{B}_{\mathsf{M}_S}$. Observe that the family of sets of $\{\mu\in\mathsf{M}_S:\mu(B)\in A\}$ where $B\in\hat{\mathcal{S}}$ and $A\in\mathcal{B}(\mathbb{R})$ is also a generator of $\mathcal{B}_{\mathsf{M}_S}$. Thus, consider any $B\in\hat{\mathcal{S}}$, $A\in\mathcal{B}(\mathbb{R})$, and $x\in\mathbb{R}$. We obtain the measurability of scalar multiplication using that $c\mu(B)\in A$ is equivalent to $\mu(B)\in \{x/c: x\in A\}(\in\mathcal{B}(\mathbb{R}))$, where $c\in\mathbb{R}\setminus0$. Indeed, 
	\begin{equation*}
		\{\eta\in\mathsf{M}_S:c\eta\in \{\mu\in\mathsf{M}_S:\mu(B)\in A\} \}=	\{\eta\in\mathsf{M}_S:c\eta(B)\in A\}
	\end{equation*}
	\begin{equation*}
		=	\{\eta\in\mathsf{M}_S:\eta(B)\in \{x/c: x\in A\} \}\in \mathcal{B}_{\mathsf{M}_S}.
	\end{equation*}
	The case $c=0$ is trivial. Concerning the measurability of addition, we have
	\begin{equation*}
		\{(\eta,\nu)\in\mathsf{M}_S\times\mathsf{M}_S:\eta+\nu\in \{\mu\in\mathsf{M}_S:\mu(B)<x\} \}		=\{(\eta,\nu)\in\mathsf{M}_S\times\mathsf{M}_S:\eta(B)+\nu(B)<x \}.
	\end{equation*}
	Hence, it remains to show that $\{(\eta,\nu)\in\mathsf{M}_S\times\mathsf{M}_S:\eta(B)+\nu(B)<x\}$ belongs to $\mathcal{B}_{\mathsf{M}_S}\otimes\mathcal{B}_{\mathsf{M}_S}$. Note that $\eta(B)+\nu(B)<x$ if and only if there exists a rational number $r$ such that $\eta(B)<r<x-\nu(B)$. Therefore,
	\begin{equation*}
		\{(\eta,\nu)\in\mathsf{M}_S\times\mathsf{M}_S:\eta(B)+\nu(B)<x\}
	\end{equation*}
	\begin{equation*}
		=\bigcup_{r\in\mathbb{Q}}\Big[ \{(\eta,\nu)\in\mathsf{M}_S\times\mathsf{M}_S:\eta(B)<r\}\cap \{(\eta,\nu)\in\mathsf{M}_S\times\mathsf{M}_S:\nu(B)<x+r\} \Big]
	\end{equation*}
	and since $ \{(\eta,\nu)\in\mathsf{M}_S\times\mathsf{M}_S:\eta(B)<r\}$ and $\{(\eta,\nu)\in\mathsf{M}_S\times\mathsf{M}_S:\nu(B)<x+r\}$ belong to $\mathcal{B}_{\mathsf{M}_S}\otimes\mathcal{B}_{\mathsf{M}_S}$, we obtain the result.
\end{proof}

\begin{proof}[Proof of Lemma \ref{lem-uncited-1}]
	Since $f=\mathbf{1}_B$, where $B\in\hat{\mathcal{S}}$, is an element of $\mathfrak{F}_{b}(\mathcal{S})$, we have that the $\sigma$-algebra generated by $\{\pi_f\}_{f\in\mathfrak{F}_{b}(\mathcal{S})}$ contains $\mathcal{B}_{\mathsf{M}_S}$. For the other direction, we have the following. Consider any $f\in\mathfrak{F}_{b}(\mathcal{S})$ and let $f_+(s)=\max(f(s),0)$ and $f_-(s)=\max(-f(s), 0)$, then $f=f_+-f_-$. By approximating $f_+$ and $f_-$ by nondecreasing sequences of simple functions and using the monotone convergence theorem we obtain the measurability of $\mu(f_+)$ and $\mu(f_-)$ and so of $\mu(f)$ for every $\mu\in \mathsf{M}_{S}$.
\end{proof}

\begin{proof}[Proof of Lemma \ref{lem-inclusion}]
	By definition we have that any locally finite measure on $S$ is a locally finite countably additive set function on $\hat{\mathcal{S}}$, hence $\mathsf{M}_{S}\subset\mathsf{M}_{S}$. Moreover, since $\mathcal{B}_{\mathcal{M}_S}$ is generated by all projection maps $\pi_{B}:\mu\mapsto\mu(B)$ where $B\in\hat{\mathcal{S}}$ and since $\mathsf{M}_{S}\subset\mathsf{M}_{S}$, we obtain that $\mathcal{B}_{\mathcal{M}_S}\subset \mathcal{B}_{\mathsf{M}_S}$. Let $B\in\hat{\mathcal{S}}$ and $A\in\mathcal{B}(\mathbb{R})$. Since
	\begin{equation*}
		\{\eta\in\mathcal{M}_S:\eta\in \{\mu\in\mathsf{M}_S:\mu(B)\in A\} \}=	\{\eta\in\mathcal{M}_S:\eta(B)\in A\}\in \mathcal{B}_{\mathcal{M}_S},
	\end{equation*}
	we obtain the measurability of the inclusion map. 
	
	For the last assertion we focus on the map $\mu\mapsto\mu_+$ because the same arguments apply to $\mu\mapsto\mu_-$. Let $\mathsf{M}_S^+$ be the space of all locally finite countably additive nonnegative set functions on $\hat{\mathcal{S}}$ and let $\mathcal{B}_{\mathsf{M}_S^+}$ be the $\sigma$-algebra in $\mathsf{M}_S^+$ generated by all projection maps $\pi_{B}:\mu\mapsto\mu(B)$, $B\in\hat{\mathcal{S}}$. By Lemma 1.3 in \cite{Kallenberg2} there exists a dissection system, which we denote by $\mathcal{R}$, that is the countable semi-ring of bounded Borel sets generating $\mathcal{S}$ (see \cite{Kallenberg2} page 16). Since $\mathcal{R}$ generates $\mathcal{S}$ and $\hat{\mathcal{S}}\subset\mathcal{S}$, for every $\epsilon>0$ and $D\in\hat{\mathcal{S}}$ there exists a set $C\in\mathcal{R}$ such that $\mu_+(D\Delta C)<\epsilon$ (where $D\Delta C:=(D\setminus C)\cup(C\setminus D)$). Then, for any $B\in\hat{\mathcal{S}}$ we have
	\begin{equation*}
		\mu_+(B)=\sup_{E\in \mathcal{S}, E\subset B}\mu(E)= \sup_{E\in \hat{\mathcal{S}}, E\subset B}\mu(E)=\sup_{E\in \mathcal{R}, E\subset B}\mu(E).
	\end{equation*}
	Then, for some $B\in\hat{\mathcal{S}}$ and $x\in\mathbb{R}$ we have 
	\begin{equation*}
		\{\eta\in\mathsf{M}_S:\eta_+\in \{\nu\in\mathsf{M}^+_S:\nu(B)>x\} \}=	\{\eta\in\mathsf{M}_S:\eta_+(B)>x\}
	\end{equation*}
	\begin{equation*}
		=\bigcup_{E\in \mathcal{R}, E\subset B}	\{\eta\in\mathsf{M}_S:\eta(E)>x\}
	\end{equation*}
	which is an element of $\mathsf{B}_{\mathsf{M}_S}$. Thus, the map that sends $\mu$ to $\mu_+$, which is a map from $(\mathsf{M}_{S},\mathcal{B}_{\mathsf{M}_S})$  to $(\mathsf{M}^+_{S},\mathcal{B}_{\mathsf{M}^+_S})$,  is measurable. Now, it remains to show that the map that sends $\mu_+$ to its extension $\mu_+$, which is a map from $(\mathsf{M}^+_{S},\mathcal{B}_{\mathsf{M}^+_S})$ to $(\mathcal{M}_{S},\mathcal{B}_{\mathcal{M}_S})$, is measurable. Since any locally finite nonnegative countably additive set function on $\hat{\mathcal{S}}$ has a unique extension to a locally finite measure on $\mathcal{S}$ by the Carath\'{e}odory's extension theorem and since any such measure is a unique locally countably additive set function on $\hat{\mathcal{S}}$, there is a one-to-one correspondence between the elements of $\mathsf{M}^+_S$ and of $\mathcal{M}_S$. Then, for any $B\in\hat{\mathcal{S}}$ and $A\in\mathcal{B}(\mathbb{R}_+)$ we have
	\begin{equation*}
		\{\eta\in\mathsf{M}^+_S:\eta\in \{\mu\in\mathcal{M}_S:\mu(B)\in A\} \}=	\{\eta\in\mathsf{M}^+_S:\eta(B)\in A\}\in \mathcal{B}_{\mathsf{M}^+_S}.
	\end{equation*}
	Hence, the map that sends $\mu_+$ to its extension $\mu_+$ is measurable. By composing these two maps we obtain that the map $\mu\mapsto\mu_+$ from $(\mathsf{M}_{S},\mathcal{B}_{\mathsf{M}_S})$ to $(\mathcal{M}_{S},\mathcal{B}_{\mathcal{M}_S})$ is measurable.
\end{proof}

\begin{lemma}\label{lem-dissection}
	The collection of all the finite unions of the elements of a dissection system of $S$ is a countable ring.
\end{lemma}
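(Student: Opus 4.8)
The plan is to verify directly that the collection $\mathcal{R}$ of all finite unions of cells $A_{nj}$ of a dissection system of $S$ is nonempty, countable, and closed under finite unions and set differences, which are precisely the defining properties of a countable ring. Countability is immediate, since a dissection system has only countably many cells and the finite unions of a countable family form a countable family. The structural fact I would isolate first is the \emph{nesting property}: because the partition at each level refines every coarser partition, any two cells $C,D$ satisfy exactly one of $C\cap D=\emptyset$, $C\subseteq D$, or $D\subseteq C$. I would also record that each cell $C$ at level $m$ is a \emph{finite} union of the cells at any finer level $n\ge m$ contained in it --- this uses that each individual partition of the system is finite, and it is the property that will keep all the unions below finite.

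Closure under unions is trivial, as a union of two finite unions of cells is again one. For intersections, the nesting property gives that $C\cap D\in\{\emptyset,C,D\}$ for cells $C,D$, so by distributing, $\big(\bigcup_i C_i\big)\cap\big(\bigcup_j D_j\big)=\bigcup_{i,j}(C_i\cap D_j)$ is again a finite union of cells; hence $\mathcal{R}$ is closed under finite intersections.

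The main work, and the step I expect to be the obstacle, is closure under set difference. I would first treat a single difference of cells $C\setminus D$ by the three cases of the nesting property: it equals $C$ if $C\cap D=\emptyset$, equals $\emptyset$ if $C\subseteq D$, and if $D\subsetneq C$ it equals the union of the finitely many cells at the level of $D$ that lie in $C$ but differ from $D$, so in every case $C\setminus D\in\mathcal{R}$. For general $A=\bigcup_i C_i$ and $B=\bigcup_j D_j$ in $\mathcal{R}$, I would then write
\[
A\setminus B=\bigcup_i\Big(C_i\setminus\bigcup_j D_j\Big)=\bigcup_i\bigcap_j (C_i\setminus D_j),
\]
and conclude, using the closure under finite intersections and unions established above, that $A\setminus B\in\mathcal{R}$. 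Together with countability this shows that $\mathcal{R}$ is a countable ring. The only delicate point throughout is the systematic appeal to finiteness of the individual partitions, without which the difference $C\setminus D$ in the third case could fail to be a finite union of cells.
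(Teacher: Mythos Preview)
Your approach differs from the paper's. The paper does not verify the ring axioms by hand; it simply invokes that a dissection system is a \emph{semi-ring} (Lemma~1.3 in \cite{Kallenberg2}), then cites the general fact that the finite unions of a semi-ring form a ring (Lemma~1.2.14 in \cite{Bogachev}), and finally that the ring generated by a countable family is countable (Halmos). Your direct verification via the nesting property is more elementary and self-contained, which is a genuine advantage if one wants to avoid external references.

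There is, however, a slip in your justification of the key step. You write that ``each individual partition of the system is finite'' and rely on this to conclude that a cell $C$ at level $m$ is a \emph{finite} union of the level-$n$ cells it contains. For a dissection system of an unbounded localized Borel space this is false: take $S=\mathbb{R}$ with the partition into unit intervals $[k,k+1)$ at level~$1$; each level consists of countably many cells. What you actually need is the \emph{local} statement that every single cell $C$ is subdivided into only finitely many cells at each finer level. For a nested family of partitions this local finiteness is \emph{equivalent} to the semi-ring property (since $C\setminus D$ with $D\subsetneq C$ must be a finite disjoint union of cells), and the latter is exactly what Kallenberg records for dissection systems. So your argument for $C\setminus D$ in the third case goes through once you replace the appeal to global finiteness of the partitions by the semi-ring property of the dissection system, or equivalently by the local finiteness of the refinement. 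With that correction your proof is complete and, in effect, unpacks the citation to Bogachev that the paper uses.
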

\begin{proof}
	By Lemma 1.3 in \cite{Kallenberg2} there exists a dissection system, which is a semi-ring of bounded Borel sets generating $\mathcal{S}$ (see \cite{Kallenberg2} pages 16 and 20). Moreover, the dissection system is a countable set because by the axiom of choice the union of countably many countable sets is countable. Moreover, by Lemma 1.2.14 in \cite{Bogachev} (see also page 19 in \cite{Kallenberg2}) the finite unions of sets of a semi-ring form a ring, which we call it $\mathcal{R}$. It remains to show that $\mathcal{R}$ is countable. Notice that $\mathcal{R}$ is the ring generated by the dissection system (see page 8 in \cite{Bogachev}). Then, by Theorem C on page 23 in \cite{Halmos} we obtain that $\mathcal{R}$ is countable.
\end{proof}

\begin{proof}[Proof of Lemma \ref{lem-countable-ring-existence}]
	By Lemma 1.3 in \cite{Kallenberg2} there exists a dissection system. Since a finite union of elements in a dissection system is a bounded Borel set, by Lemma \ref{lem-dissection} we obtain the result.
\end{proof}

\begin{proof}[Proof of Theorem \ref{thm-extension}]
	For brevity we call $\eta=(\eta(U))_{U\in\mathcal{R}}$ a random set function. The existence of $\mathcal{R}$ is proved in Lemma \ref{lem-countable-ring-existence}. Since $\mathcal{R}$ is countable, we have that the random set functions $\eta_+$ and $\eta_-$ on $ \mathcal{R}$, defined by
	\begin{equation*}
		\eta_+(A)=\sup_{E\in \mathcal{R}, E\subset A} \eta(E)\quad\textnormal{and}\quad 	\eta_-(A)=\sup_{E\in \mathcal{R}, E\subset A} -\eta(E),\quad\forall A\in \mathcal{R},
	\end{equation*}
	are well-defined. By point (iii) we immediately have that $\eta_+(A)<\infty$ and $\eta_-(A)<\infty$  almost surely for all $A\in \mathcal{R}$. Moreover, point (i) implies that $\eta_+$ and $\eta_-$ are also finitely additive in the sense of point (i). Indeed, let $A,B\in \mathcal{R}$ disjoint. We have
	\begin{equation*}
		\eta_+(A\cup B)=\sup_{E\in \mathcal{R}, E\subset A\cup B} \eta(E)=\sup_{E\in \mathcal{R}, E\subset A\cup B} \eta((E\cap A)\cup(E\cap B))
	\end{equation*}
	\begin{equation*}
		\stackrel{a.s.}{=}\sup_{E\in \mathcal{R}, E\subset A\cup B} \eta(E\cap A)+\eta(E\cap B)
	\end{equation*}
	\begin{equation*}
		=\sup_{E_1\in \mathcal{R}, E_1\subset A} \eta(E_1)+\sup_{E_2\in \mathcal{R}, E_2\subset B} \eta(E_2)=\eta_+(A)+\eta_+(B),
	\end{equation*}
	where we used that a ring is closed under finite intersections and finite unions, and so for any $E\subset A\cup B$ we have that $E\cap A$ and $ E\cap B$ belong to  $\mathcal{R}$, and for any $E_1\subset A$ and $E_2\subset B$ with $E_1,E_2\in \mathcal{R}$ we have that $E_1\cup E_2\in \mathcal{R}$. Note that we also use that $\mathcal{R}$ is countable and so the set $$\{\omega\in\Omega: \eta((E\cap A)\cup(E\cap B))(\omega)= \eta(E\cap A)(\omega)+\eta(E\cap B)(\omega)\textnormal{ for all $E\in \mathcal{R}$ s.t.~$E\subset A\cup B$}\}$$
	is of probability measure 1. The same arguments apply to $\eta_-$. 
	
	We now show that the finite additivity of $\eta_+$ together with points (i) and (ii) imply the countable additivity of $\eta_+$. First, we prove the countable additivity of $\eta$, in the sense that given $(B_k)_{k\in\mathbb{N}}$ a sequence of disjoints sets in $\mathcal{R}$ s.t.~$B=\bigcup_{k\in\mathbb{N}}B_k\in\mathcal{R}$ we have $\eta(B)\stackrel{a.s.}{=}\sum_{k\in\mathbb{N}}\eta(B_k)$. So, consider such sequence $(B_k)_{k\in\mathbb{N}}$ and let $A_n=\bigcup_{k=n}^\infty B_k$, for $n\in\mathbb{N}$. Since a ring is closed under finite unions and proper differences, $A_n\in\mathcal{R}$ for every $n\in\mathbb{N}$. Then, by point (i) we have that $\eta(B)\stackrel{a.s.}{=}\sum_{k=1}^{n-1}\eta(B_k) +\eta(A_n)$ and by point (ii) we have that $\eta(A_n)\stackrel{a.s.}{\to}0$ as $n\to\infty$. Thus, we have
	\begin{equation*}
		\eta(B)-\sum_{k=1}^{n-1}\eta(B_k)\stackrel{a.s.}{=}\eta(A_n)\stackrel{a.s.}{\to}0,\quad\textnormal{as $n\to\infty$},
	\end{equation*}
	and so $\eta(B)\stackrel{a.s.}{=}\sum_{k\in\mathbb{N}}\eta(B_k)$. This holds for any order of the elements of $(B_n)_{n\in\mathbb{N}}$. From this we obtain
	\begin{equation*}
		\eta^+\left(\bigcup_{k\in\mathbb{N}}B_k\right)=\sup_{E\in \mathcal{R}, E\subset \cup_{k\in\mathbb{N}}B_k} \eta(E)=\sup_{E\in \mathcal{R}, E\subset \cup_{k\in\mathbb{N}}B_k} \eta\left(\bigcup_{k\in\mathbb{N}}(E\cap B_k)\right)
	\end{equation*}
	\begin{equation*}
		\stackrel{a.s.}{=}\sup_{E\in \mathcal{R}, E\subset \cup_{k\in\mathbb{N}}B_k} \sum_{k\in\mathbb{N}}\eta(E\cap B_k)\leq \sum_{k\in\mathbb{N}} \sup_{E\in \mathcal{R}, E\subset \cup_{k\in\mathbb{N}}B_k} \eta(E\cap B_k)
	\end{equation*}
	\begin{equation*}
		=\sum_{k\in\mathbb{N}} \sup_{E\in \mathcal{R}, E\subset B_k} \eta(E)=\sum_{k\in\mathbb{N}}\eta_+(B_k)
	\end{equation*}
	where we used again that $\mathcal{R}$ is countable. So, we obtain $		\eta_+\left(\bigcup_{k\in\mathbb{N}}B_k\right)\leq \sum_{k\in\mathbb{N}}\eta_+(B_k)$ a.s.. On the other hand, by finite additivity we have that $		\eta_+\left(\bigcup_{k\in\mathbb{N}}B_k\right)\geq \sum_{k\in\mathbb{N}}\eta_+(B_k)$ a.s., indeed for all $n\in\mathbb{N}$
	\begin{equation*}
		\eta_+\left(\bigcup_{k\in\mathbb{N}}B_k\right)\stackrel{a.s.}{=}\eta_+\left(\bigcup_{k=1}^{n}B_k\right)+\eta_+\left(\bigcup_{k=n+1}^{\infty}B_k\right)\geq \eta_+\left(\bigcup_{k=1}^{n}B_k\right)\stackrel{a.s.}{=} \sum_{k=1}^n\eta_+(B_k).
	\end{equation*}
	Thus, we conclude that $\eta_+(\bigcup_{k\in\mathbb{N}}B_k)\stackrel{a.s.}{=} \sum_{k\in\mathbb{N}}\eta_+(B_k)$. Hence, $\eta_+$ and (using the same arguments) $\eta_-$ are countably additive.
	
	For any $C_n\downarrow\emptyset$ along $\mathcal{R}$, we now show that $\eta_+(C_n)\stackrel{a.s.}{\to}0$. Let $D_n=C_{n}\setminus C_{n+1}$. Then, $D_j\cap D_i=\emptyset$ for every $i,j\in\mathbb{N}$ with $i\neq j$, and $C_n=\bigcup_{k=n}^{\infty}D_k$ for every $n\in\mathbb{N}$. By countable additivity of $\eta_+$ we have $\eta_+(C_n)\stackrel{a.s.}{=}\sum_{k=n}^{\infty}\eta_+(D_k)$. Since by (iii) we know that $\sum_{k=n}^{\infty}\eta_+(D_k)<\infty$ a.s., we have that $\sum_{k=n}^{\infty}\eta_+(D_k)\stackrel{a.s.}{\to}0$ as $n\to\infty$. Hence, we conclude that $\eta_+(C_n)\stackrel{a.s.}{\to}0$ as $n\to\infty$. The same holds for $\eta_-$.
	
	Therefore, $\eta_+$ and $\eta_-$ satisfy the conditions of Theorem 2.15 in \cite{Kallenberg2} and so there exist two a.s~unique random measures $\xi_+$ and $\xi_-$ on $(S,\mathcal{S})$ such that $\xi_+(A)\stackrel{a.s.}{=}\eta_+(A)$ and $\xi_-(A)\stackrel{a.s.}{=}\eta_-(A)$ for all $A\in\mathcal{R}$. Hence, since $\eta(A)\stackrel{a.s.}{=}\eta_+(A)-\eta_-(A)$, by Lemmas \ref{lem-measurable-vector-space} and \ref{lem-inclusion} we obtain the result. The necessity is trivial. In particular see Theorem 2.15 in \cite{Kallenberg2} for the necessity of conditions (i) and (ii) and see Example 9.1(f) in \cite{Daley} for the necessity of condition (iii).
\end{proof}

\begin{example}[Counterexample of Theorem 10 in \cite{Jacob1995}] In this example we provide a counterexample to Theorem 10 (and Corollary 11) in \cite{Jacob1995}, in particular we show that condition (M2) is not sufficient for their results and so also for our Theorem \ref{thm-extension} and Corollary \ref{co-extension}, even for deterministic measures.
	
	Let $S$ be a compact Hausdorff second countable topological space. Let $\mathcal{B}$ a countable basis of the topology of $S$, which we can assume to be closed under finite unions and intersections, and let $\mathcal{A}$ be the ring induced by $\mathcal{B}$. We assume that $S\in\mathcal{B}$, so that $\mathcal{A}$ is an algebra. Let $\mu$ be a countably additive set function on $\mathcal{A}$ with values in $\mathbb{R}$ and $\mu(\emptyset)=0$ (this is called a real measure in the notation of \cite{RaoRao93}).
	
	By Theorem 2.5.3 in \cite{RaoRao93} we know that $\mu$ has a Jordan decomposition composed by two positive measures $\mu^+$ and $\mu^-$ on $\mathcal{A}$. Denote by $|\mu|:=\mu^++\mu^-$, which is a positive measure on $\mathcal{A}$. From the same theorem we know that $|\mu|$ is bounded if and only if $\mu$ is bounded.
	
	Assume that $|\mu|(A_n)\to0$ for every sequence of sets $(A_n)_{n\in\mathbb{N}}$ such that $A_n\in\mathcal{A}$ (for every $n\in\mathbb{N}$) and $A_n\downarrow\emptyset$.
	
	Question: Can we conclude that $|\mu|$ is bounded?
	
	No, because of the following counterexample. Let $S = \mathbb{N} \cup \{\infty\}$ be the one-point compactification of the naturals. Let $\mathcal{B}$ consists of finite subsets of $\mathbb{N}$ as well as their complements, which is indeed a countable basis of topology for $S$. Note that $\mathcal{B}$ is already closed under finite unions, finite intersections, and taking complements, so $\mathcal{A} = \mathcal{B}$. Let $\mu$ be defined by $\mu(A) = |A|$ if $A$ is finite and $\mu(A) = -|S \setminus A|$ if $A$ is co-finite. Then $\mu$ is a real measure in the sense of \cite{RaoRao93}. Observe that there is no countable collection of disjoint subsets of $\mathcal{A}$ whose union is in $\mathcal{A}$ unless all but finitely many of them are empty, since all finite sets in $\mathcal{A}$ do not contain $\infty$ but all infinite sets in $\mathcal{A}$ are co-finite and contain $\infty$. So, countable additivity just reduces to finite additivity, which is easy to verify.
	
	Further, observe that $\mu^+$ is simply the counting measure on $\mathbb{N}$ and $\mu^-$ is $0$ on finite sets and $\infty$ on co-finite sets, so $|\mu|$ is the counting measure on $S$. Therefore, $\mu$ and $|\mu|$ are not bounded. Now, let $A_n \downarrow \varnothing$. Since all infinite sets in $\mathcal{A}$ contain $\infty$, we have that if all the $A_n$'s were infinite then $\bigcap_{n\in\mathbb{N}} A_n=\{\infty\}\neq\emptyset$. Thus, we must have $A_n$ are eventually finite, and thus eventually the empty set, from which we obtain that $|\mu|(A_n) \to 0$.
	
	Therefore, we conclude that, since $\mu$ is unbounded, $\mu\notin\mathsf{M}_S$ (that is $\mu$ is not a Radon measure). Thus, conditions (M1) and (M2) in Theorem 10 in \cite{Jacob1995} are not sufficient to ensure the existence of a random signed measure $\xi$ on $S$.
\end{example}

\begin{proof}[Proof of Lemma \ref{lem-uncited-2}]
	It is immediate from the definition of random measures and of random signed measures and from Lemmas \ref{lem-measurable-vector-space} and \ref{lem-inclusion}.
\end{proof}

\begin{proof}[Proof of Lemma \ref{lem-Jordan}]
	For every $\omega\in\Omega$ and $B\in\hat{\mathcal{S}}$ let $		\eta_+(\omega,B):=\sup_{E\in \mathcal{S}, E\subset B}\xi(\omega,E)$. Fix $\omega\in\Omega$. Since $\{E\in \mathcal{S}, E\subset B\}=\{E\in \hat{\mathcal{S}}, E\subset B\}$, by Lemmas \ref{lem-vecchio-2} and \ref{lem-vecchio-3} there exists a unique measure on $\mathcal{S}$ which extends $\eta_+(\omega,\cdot)$. We denote this extension by $\xi_+(\omega,\cdot)$. Now, we show that $\xi_+(\cdot,D)$ is measurable for every $D\in \mathcal{S}$. Let $\mathcal{R}$ be a countable ring of bounded Borel sets generating $ \mathcal{S}$, which exists thanks to Lemma \ref{lem-countable-ring-existence}. Since $\mathcal{R}$ generates $\mathcal{S}$ and $\hat{\mathcal{S}}\subset\mathcal{S}$, for every $\omega\in\Omega$, $\epsilon>0$, $D\in\hat{\mathcal{S}}$ there exists a set $C\in\mathcal{R}$ such that $\xi_+(\omega,D\Delta C)<\epsilon$ (where $D\Delta C:=(D\setminus C)\cup(C\setminus D)$). Then, for any $B\in\hat{\mathcal{S}}$ we have
	\begin{equation*}
		\xi_+(\omega,B)=\sup_{E\in \mathcal{S}, E\subset B}\xi(\omega,E)= \sup_{E\in \hat{\mathcal{S}}, E\subset B}\xi(\omega,E)=\sup_{E\in \mathcal{R}, E\subset B}\xi(\omega,E)
	\end{equation*}
	and since $\mathcal{R}$ is a countable set, $\xi_+(\cdot,B)$ is measurable. Then,  by Lemma 1.14 point (iii) in \cite{Kallenberg2} we obtain that $\xi_+(\cdot,D)$ is measurable for every $D\in \mathcal{S}$ and in particular that it is random measure. The same arguments apply to $\xi_-$. Thus, $\xi_+$ and $\xi_-$ are random measures and satisfy the stated properties.
\end{proof}

\begin{proof}[Proof of Corollary \ref{co-atom}]
	It follows from Lemma \ref{lem-Jordan} and the fact that $|\xi|$ has at most countably many fixed atoms (see Corollary 2.6 in \cite{Kallenberg2}).
\end{proof}

\begin{proof}[Proof of Corollary \ref{co-uncited-3}]
	It follows from Corollary 2.5 in \cite{Kallenberg2} and Lemma \ref{lem-Jordan}.
\end{proof}

\begin{proof}[Proof of Lemma \ref{lem-deterministic-representation}]
	By Lemma  \ref{lem-vecchio-3} we know that there exist two unique measures $\mu_+$ and $\mu_-$ such that $\mu=\mu_+-\mu_-$. By Lemma 1.6 in \cite{Kallenberg2} we obtain that the unique representation
	\begin{equation*}
		\mu_+=\alpha'_+\sum_{k\leq\kappa'_+}\beta'_{+,k}\delta_{\sigma'_{+,k}}\quad\text{and}\quad \mu_-=\alpha'_-+\sum_{k\leq\kappa'_-}\beta'_{-,k}\delta_{\sigma'_{-,k}}
	\end{equation*}
	where $\alpha'_+$, $\kappa'_+$, and all $(\beta'_{+,k},\sigma'_{+,k})$ can be chosen to be measurable functions of $\mu_+$ and $\alpha'_-$, $\kappa'_-$, and all $(\beta'_{-,k},\sigma'_{-,k})$ can be chosen to be measurable functions of $\mu_-$. Observe that here $\alpha'_+$, $\kappa'_+$, $(\beta'_{+,k},\sigma'_{+,k})$, $\alpha'_-$, $\kappa'_-$, and $(\beta'_{-,k},\sigma'_{-,k})$ all act on $\mathcal{M}_S$.
	
	Consider $\kappa'_+$. We can extend the domain of $\kappa'_+$ from $\mathcal{M}_S$ to $\mathsf{M}_S$ by defining a new map, which we call $\kappa_+$, such that $\kappa_+(\mu)=\kappa'_+(\mu_+)$. Since by Lemma \ref{lem-inclusion} we know that the map $\mu\mapsto\mu_+$ is measurable and since $\kappa_+$ is the composition of $\kappa'_+$ and of the map $\mu\mapsto\mu_+$, we obtain that $\kappa_+$ is measurable. The same holds true for all the other maps. Since $\kappa_+$ restricted to $\mathcal{M}_S$ is equal to $\kappa'_+$ and the same holds true for all the other maps, we obtain that
	\begin{equation*}
		\mu_+=\alpha_+\sum_{k\leq\kappa_+}\beta_{+,k}\delta_{\sigma_{+,k}}\quad\text{and}\quad \mu_-=\alpha_-+\sum_{k\leq\kappa_-}\beta_{-,k}\delta_{\sigma_{-,k}}
	\end{equation*}
	from which the unique representation (\ref{eq-measure}) follows. 
	
	For the converse, by Lemma 1.6 in \cite{Kallenberg2} we obtain that $\alpha_+$, $\kappa_+$, and $(\beta_{+,k},\sigma_{+,k})$, and $\alpha_-$, $\kappa_-$, and all $(\beta_{-,k},\sigma_{-,k})$, which are here considered as maps from $(A,\mathcal{B})$ to their respective codomain (\textit{e.g.}~$\alpha_+:A\mapsto\mathcal{M}_S$), uniquely determine $\mu_+$ and $\mu_-$ as measurable functions on $A$. Then, by the measurability of addition (Lemma \ref{lem-measurable-vector-space}) and the measurability of the inclusion map (Lemma \ref{lem-inclusion}) we obtain that $\mu$ considered as a map from $(A,\mathcal{B})$ to $(\mathcal{M}_S,\mathcal{B}_{\mathcal{M}_S})$ is measurable. Indeed, let $a$ denote the addition map and let $i$ denote the inclusion map, we have that $\mu(z)=a(i(\mu_+(z)),i(\mu_-(z)))$ for $z\in A$, namely $\mu:(A,\mathcal{B})\to(\mathcal{M}_S\times\mathcal{M}_S,\mathcal{B}_{\mathcal{M}_S}\otimes \mathcal{B}_{\mathcal{M}_S})\to(\mathsf{M}_S\times\mathsf{M}_S,\mathcal{B}_{\mathsf{M}_S}\otimes \mathcal{B}_{\mathsf{M}_S})\to(\mathsf{M}_S,\mathcal{B}_{\mathsf{M}_S})$. Since $\mu_+$, and $\mu_-$ uniquely determine $\mu$ we obtain our result.
\end{proof}

\begin{proof}[Proof of Corollary \ref{co-atomic decomposition}]
	It follows from Lemma \ref{lem-deterministic-representation}.
\end{proof}

\begin{proof}[Proof of Proposition \ref{pro-fdd=d}]
	One direction is trivial. For the other we proceed as follows. Denote by $\Sigma$ the $\sigma$-algebra in $\mathsf{M}_{S}$ generated by all projections $\pi_{A}:\mu\mapsto\mu(A)$, $A\in\mathcal{I}$. Consider the class $\mathcal{C}$ of subsets $A$ of $S$ for which $\pi_{A}$ is $\Sigma$-measurable. Since 
	\begin{align*}
		\pi_{A\cup B}(\mu)&=\mu(A\cup B)=\mu(A)+\mu(B)=\pi_A(\mu)+\pi_B(\mu),\\
		\pi_{C\setminus D}(\mu)&=\mu(C\setminus D)=\mu(C)-\mu(D)=\pi_C(\mu)-\pi_D(\mu),
	\end{align*}
	for every $A,B,C,D\in\mathcal{C}$ with $A,B$ disjoints and $C\supseteq D$ and for every $\mu\in\mathsf{M}_S$, we obtain that $\pi_{A\cup B}\equiv\pi_A+\pi_B$ and $\pi_{C\setminus D}\equiv\pi_C-\pi_D$. Since the sum and the difference of two measurable functions are measurable, we have that $\pi_{A\cup B}$ and $\pi_{C\setminus D}$ are $\Sigma$-measurable and so $\mathcal{C}$ is closed under finite union and proper difference (and so closed under finite intersection and complementation). 
	
	Moreover, let $(A_n)$ to be a sequence of elements in $\mathcal{C}$ such that $A_n\uparrow A$ (or $A_n\downarrow A$) where $A\in\hat{\mathcal{S}}$. The class $\{B\in\mathcal{S}:B\subseteq A\}$ is a $\sigma$-algebra (see also Lemma \ref{215}) and so any $\mu\in \mathsf{M}_{S}$ is a well-defined bounded signed measure when restricted to $A$ (or to a bounded set large enough to include all the sets $(A_n)_{n\geq n'}$, for some $n'$ large enough, when $A_n\downarrow A$). Then, by continuity lemma $\mu(A_n)\to\mu(A)$, for every $\mu\in \mathsf{M}_{S}$, and so we obtain that $\pi_{A_n}\to\pi_A$ pointwise, as $n\to\infty$. Since the limit of a sequence of real-valued measurable functions which converges pointwise has a measurable limit, we obtain that $\pi_A$ is $\Sigma$-measurable. Thus, $\mathcal{C}$ is a (local) monotone class. Since $\mathcal{C}\supset\mathcal{I}$, by Lemma 1.2 point (ii) in \cite{Kallenberg2} it follows that $\mathcal{C}$ includes the $\sigma$-ring generated by $\mathcal{I}$, which means that any bounded set of the $\sigma$-ring generated by $\mathcal{I}$ is also $\Sigma$-measurable. But since the $\sigma$-ring generated by $\mathcal{I}$ is $\mathcal{S}$, we have that any $A\in\hat{\mathcal{S}}$ is $\Sigma$-measurable. Hence, $\Sigma$ contains the $\sigma$-algebra in $\mathsf{M}_{S}$ generated by all projections $\pi_{A}:\mu\mapsto\mu(A)$, $A\in\hat{\mathcal{S}}$. Since $\mathcal{I}\subset\hat{\mathcal{S}}$ we conclude that $\Sigma$ coincides with $\mathcal{B}_{\mathsf{M}_S}$.
	
	Now, consider the class $\mathcal{E}$ of subsets of $\mathsf{M}_S$ given by $\{\mu\in\mathsf{M}_S:\mu(A_1)\in B_1,...,\mu(A_k)\in B_k\}$ where $A_1,...,A_k\in\mathcal{I}$, $B_1,...,B_k\in\mathcal{B}(\mathbb{R})$, and $k\in\mathbb{N}$. We show that $\mathcal{E}$ is a semi-ring. Since 
	\begin{align*}
		&\{\mu\in\mathsf{M}_S:\mu(A_1)\in B_1,...,\mu(A_k)\in B_k\}\\&	=\{\mu\in\mathsf{M}_S:\mu(A_1)\in B_1,...,\mu(A_k)\in B_k,\mu(A_{k+1})\in\mathbb{R},...,\mu(A_{k+m})\in\mathbb{R}\}
	\end{align*}
	
	for every $m\in\mathbb{N}$, we can always assume that $A_1,...,A_k$ are the same for the sets under consideration. First, we show that $\mathcal{E}$ is closed under finite intersections. So, consider two sets $E_1,E_2\in\mathcal{E}$ with say 
	\begin{align*}
		E_1=&\{\mu\in\mathsf{M}_S:\mu(A_1)\in B_1,...,\mu(A_k)\in B_k\},\\
		E_2=&\{\mu\in\mathsf{M}_S:\mu(A_1)\in \tilde{B}_1,...,\mu(A_k)\in \tilde{B}_k\}.
	\end{align*}
	Then, 
	\begin{equation*}
		E_1\cap E_2=\{\mu\in\mathsf{M}_S:\mu(A_1)\in B_1\cap\tilde{B}_1,...,\mu(A_k)\in B_k\cap\tilde{B}_k\}
	\end{equation*}
	and since intersections of Borel sets are Borel sets we have that $B_1\cap\tilde{B}_1,...,B_k\cap\tilde{B}_k$, are Borel sets and so $E_1\cap E_2\in\mathcal{E}$. Now, we need to show that given $E_1,E_2\in\mathcal{E}$ there exist disjoint $F_1,...,F_n\in\mathcal{E}$ such that $E_1\setminus E_2=\bigcup_{i=1}^n F_i$ for some $n\in\mathbb{N}$. Consider $E_1$ and $E_2$ as before. Then, 
	\begin{equation*}
		E_1\setminus E_2=\{\mu\in\mathsf{M}_S:(\mu(A_1),...,\mu(A_k))\in (B_1,...,B_k)\setminus(\tilde{B}_1,...,\tilde{B}_k)\}.
	\end{equation*}
	If we show that $		(B_1,...,B_k)\setminus(\tilde{B}_1,...,\tilde{B}_k)=\bigcup_{i=1}^n C_i$ for some disjoint $k$-fold product of Borel sets $C_1,...,C_n$ and some $n\in\mathbb{N}$, then we would have that
	\begin{equation*}
		E_1\setminus E_2=\bigcup_{i=1}^n\{\mu\in\mathsf{M}_S:(\mu(A_1),...,\mu(A_k))\in C_i\}
	\end{equation*}
	and so we would obtain the result. It is possible to see that 
	\begin{equation*}
		(B_1,...,B_k)\setminus(\tilde{B}_1,...,\tilde{B}_k)=\bigcup(Z_1,...,Z_k),
	\end{equation*}
	where the union runs through all the sets of the form $(Z_1,...,Z_k)$ such that $Z_i\in\{B_i\cap\tilde{B}_i,B_i\setminus\tilde{B}_i\}$, for each $i=1,...,k$, excluding the set $(B_1\cap\tilde{B}_1,...,B_k\cap\tilde{B}_k)$. Thus, it is a union of $2^{k}-1$ sets and the sets are disjoint, this is because for every two sets $(Z_1,...,Z_k)$ and $(Z'_1,...,Z'_k)$ there will be at least one $l\in\{1,...,k\}$ such that $Z_l=B_l\cap\tilde{B}_l$ and $Z'_l=B_l\setminus\tilde{B}_l$.
	
	Thus, $\mathcal{E}$ is a semi-ring and it is possible to see that it generates $\Sigma$. Therefore, by the fact that $(\xi(I_1),...,\xi(I_n))\stackrel{d}{=}(\eta(I_1),...,\eta(I_n))$, for every $I_1,...,I_n\in\mathcal{I}$, by the fact that $\mathcal{E}$ is a semi-ring and it generates $\Sigma$, by Proposition A1.3.I (a) and (b) in \cite{Daley1}, and by the countable additivity of the probability measure on $\mathsf{M}_S$, we obtain that $\xi\stackrel{d}{=}\eta$ on $\Sigma$. But since $\Sigma$ coincides with $\mathcal{B}_{\mathsf{M}_S}$, we obtain the stated result.
\end{proof}

\begin{proof}[Proof of Lemma \ref{lem-kernel}]
	First, observe that in all cases (namely $\textnormal{(i)}$, $\textnormal{(ii)}$, and $\textnormal{(iii)}$) we have that $\xi(\omega,\cdot )$ is a countably additive set function on $\hat{\mathcal{S}}$, for every fixed $\omega\in\Omega$. 
	
	We first show that $\textnormal{(i)}\Leftrightarrow\textnormal{(ii)}$. To show that $\textnormal{(ii)}\Rightarrow\textnormal{(i)}$ notice $\mathcal{B}_{\mathsf{M}_S}$ is generated by the projections map $\pi_{B}:\mu\mapsto\mu(B)$ for all $B\in\hat{\mathcal{S}}$, hence $\pi_B$ is a measurable function from $(\mathsf{M}_S,\mathcal{B}_{\mathsf{M}_S})$ to $(\mathbb{R},\mathcal{B}(\mathbb{R}))$ for all $B\in\hat{\mathcal{S}}$. Since a composition of measurable functions is measurable, we have that $\xi(\cdot,B)=\pi_B\circ\xi$ is $\mathcal{F}$-measurable. Thus, $\textnormal{(ii)}\Rightarrow\textnormal{(i)}$. 
	
	For the converse we need to show that $\xi$ is a measurable function from $(\Omega,\mathcal{F},\mathbb{P})$ to $(\mathsf{M}_S,\mathcal{B}_{\mathsf{M}_S})$, that is $\xi^{-1}(A)\in\mathcal{F}$ for every $A\in \mathcal{B}_{\mathsf{M}_S}$. By Lemma 1.4 in \cite{Kallenberg} it is enough to show that $\xi^{-1}(C)\in\mathcal{F}$ for every $C$ belonging to the set that generates $\mathcal{B}_{\mathsf{M}_S}$. Thus, it is sufficient to show that
	$\{\omega\in\Omega: \xi(\omega)\in\{\mu\in\mathsf{M}_S:\mu(B)\in D\} \}$ belongs to $\mathcal{F}$, where $B\in\hat{\mathcal{S}}$ and $D\in\mathcal{B}(\mathbb{R})$. But this follows from the $\mathcal{F}$-measurability of the pre-kernel $\xi$. Indeed, 
	\begin{align*}
		&\{\omega\in\Omega: \xi(\omega)\in\{\mu\in\mathsf{M}_S:\mu(B)\in D\}\}=	\{\omega\in\Omega: \pi_B(\xi(\omega))\in D\}\\&	=\{\omega\in\Omega: \xi(\omega,B)\in D\}\in\mathcal{F}.
	\end{align*}
	Thus, $\textnormal{(i)}\Rightarrow\textnormal{(ii)}$.
	
	We now show that $\textnormal{(i)}\Leftrightarrow\textnormal{(iii)}$. Since $\textnormal{(i)}$ immediately implies $\textnormal{(iii)}$, it remains to show that $\textnormal{(iii)}\Rightarrow\textnormal{(i)}$. We follow similar arguments as the ones in the proof of Proposition \ref{pro-fdd=d}. Let $\mathcal{C}'$ be the class of subsets of $B\in\hat{\mathcal{S}}$ such that $\xi(\cdot,B)$ is $\mathcal{F}$-measurable, then $\mathcal{C}'$ is a local monotone class and the $\mathcal{F}$-measurability extends to any bounded set of the $\sigma$-ring generated by $\mathcal{I}$ and since the $\sigma$-ring generated by $\mathcal{I}$ is $\mathcal{S}$ we have that $\xi(\cdot,B)$ is $\mathcal{F}$-measurable for every $B\in\hat{\mathcal{S}}$. Thus, we obtain $\textnormal{(i)}$.
\end{proof}

\begin{proof}[Proof of Corollary \ref{co-uncited-4}]
	Since $\xi$ is a random signed measure, by Lemma \ref{lem-Jordan} there exist two random measures $\xi_+$ and $\xi_-$ such that $\xi=\xi_+-\xi_-$ on $\hat{\mathcal{S}}$. Then, we obtain the result by Corollary 2.17 in \cite{Kallenberg2}.
\end{proof}

\begin{proof}[Proof of Lemma \ref{lem-correspondence-pos}]
	Let $B$ be the open interval $(a,b)$ for some $0<a<b<\infty$. Since $\mu(A)<\infty$, there are only finitely many atoms with values in $(a,b)$. Thus, for any dissection system $(A_{nj})$ there is an $n'$ large enough such that these atoms are all in different $A_{nj}$'s for every $n>n'$. Moreover, these atoms have values in $(a+\varepsilon,b-\varepsilon)$, that is in $B_\varepsilon$, for any $0<\varepsilon<\varepsilon^*$, for some $\varepsilon^*>0$.  Thus, the problem is to avoid counting atoms which have values outside $(a,b)$. If an atom has value greater than or equal to $b$ then it will never be counted, thus such atoms are not a problem. For the atoms with values smaller than or equal to $a$ we have the following. 
	
	In this case, as shown in Example \ref{counterexample}, the problem is that there might be a concentration of atoms around certain points whose values when summed are above $a$. Since $\mu(A)<\infty$, there are only finitely many of such concentrations of atoms because there can only be finitely many $A_{nj}$'s such that $\mu(A_{nj})>a$, in particular there can be at most $\frac{\mu(A)}{a}$ many of them. In the worst case, the concentration is around an atom of value $a$. Then for every $\varepsilon>0$ we will have that there is a $\tilde{n}_\varepsilon$ large enough such that for the $A_{nj}$ containing such an atom we will have that $\mu(A_{nj})<a+\varepsilon$. Thus, such atom will not be counted. Since there are finitely many such atoms, there is going to be an $n_\varepsilon$ large enough such that none of these atoms are counted. Therefore, for every $\varepsilon<\varepsilon^*$ and for every $n>n_\varepsilon$, for some $n_\varepsilon\in\mathbb{N}$, we have that $\sum_{i}\delta_{(s_i,\kappa_i)}(A\times B)=\sum_{j}\delta_{\mu(A_{nj})}(B_{\varepsilon})$, from which we obtain the result. The same arguments apply to the case of $B$ being any open set. The arguments for $B$ closed are similar.
\end{proof}

\begin{corollary}\label{co-9.1.VII}
	Equations $(\ref{D1})$,  $(\ref{D2})$ and  $(\ref{D3})$ establish a one-to-one correspondence between purely atomic random measure $\xi$ and marked point process, $N_{\xi}$ say, satisfying the condition $(\ref{D0})$.
\end{corollary}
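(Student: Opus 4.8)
The plan is to lift the deterministic, both-ways measurable correspondence of Lemma \ref{lem-correspondence-pos} to the level of random elements by a composition argument, so that essentially no new analytic content is needed beyond the bookkeeping of null sets. Recall that a purely atomic random measure $\xi$ is, by definition, a measurable map from $(\Omega,\mathcal{F})$ into the measurable subspace of $(\mathcal{M}_S,\mathcal{B}_{\mathcal{M}_S})$ of purely atomic measures, and that a marked point process on $S$ is a random element in the space of counting measures on $S\times(0,\infty)$ subject to the marking bound $N(\{s\}\times(0,\infty))\le1$ for all $s$. The whole corollary will follow once the two deterministic assignments of Lemma \ref{lem-correspondence-pos} are recognised as measurable maps and composed with $\xi$.

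First I would record the measurability of the forward map $\Phi:\mu\mapsto N_\mu$ and its inverse $\Phi^{-1}:N\mapsto\mu$. For each fixed $A\in\hat{\mathcal{S}}$ and Borel $B$, equations $(\ref{D2})$ and $(\ref{D3})$ write $N_\mu(A\times B)$ as the iterated limit ($\varepsilon\to0$, then $n\to\infty$) of the functions $\mu\mapsto\sum_j\delta_{\mu(A_{nj})}(B_\varepsilon)$ taken along a fixed dissection system; since each summand $\mu\mapsto\mathbf{1}\{\mu(A_{nj})\in B_\varepsilon\}$ is measurable (the projection $\mu\mapsto\mu(A_{nj})$ is measurable and $B_\varepsilon$ is Borel), the countable sum and the double limit of real-valued measurable functions are again measurable, so $\Phi$ is measurable into the evaluation-generated $\sigma$-algebra on the space of counting measures. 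Dually, $(\ref{D1})$ expresses $\mu(A)=\int_{A\times(0,\infty)} s\,N(ds\times dx)$ as a measurable functional of $N$, giving measurability of $\Phi^{-1}$.

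Next I would form the compositions. Setting $N_\xi:=\Phi\circ\xi$, the composition of measurable maps is measurable, so $N_\xi$ is a random counting measure on $S\times(0,\infty)$. Since $\xi$ is a random measure it is a.s.\ locally finite, and local finiteness of $\xi(\omega)$ is exactly the integrability condition $(\ref{D0})$ for $N_{\xi(\omega)}$; moreover the atoms of a purely atomic measure are distinct in their $S$-coordinate, which yields the marking bound $N_\xi(\{s\}\times(0,\infty))\le1$, so $N_\xi$ is genuinely a marked point process. Conversely, given a marked point process $N$ for which $(\ref{D0})$ holds a.s., the map $\Phi^{-1}\circ N$ is a purely atomic random measure, and the pointwise identities $\Phi^{-1}\circ\Phi=\mathrm{id}$ and $\Phi\circ\Phi^{-1}=\mathrm{id}$ from Lemma \ref{lem-correspondence-pos} show that the two assignments are mutually inverse.

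I do not expect a genuine obstacle here, since the substantive work was already carried out in Lemma \ref{lem-correspondence-pos}; the only point requiring care is the null-set bookkeeping. The deterministic identities of that lemma hold for \emph{every} $\mu$ in the relevant domain, so after composing with $\xi$ they hold off a single $\mathbb{P}$-null set, namely the event where $\xi$ fails to be locally finite or purely atomic. It is precisely this uniformity in $\mu$ — rather than any new estimate — that lets the almost-sure correspondence be declared well defined, and confirming the measurability of the iterated limits in $(\ref{D2})$ and $(\ref{D3})$ as functions of $\omega$ is the one technical step I would spell out in full.
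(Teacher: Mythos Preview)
Your proposal is correct and takes essentially the same approach as the paper, which simply records that the corollary follows directly from Lemma~\ref{lem-correspondence-pos}. You have spelled out in detail the lifting-by-composition argument that the paper leaves implicit, including the measurability of the forward and inverse assignments (which the lemma already asserts as ``both ways measurable'') and the null-set bookkeeping; none of this goes beyond what the one-line proof intends.
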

\begin{proof}
	It follows directly from Lemma \ref{lem-correspondence-pos}.
\end{proof}

\begin{proof}[Proof of Lemma \ref{lem-correspondence}]
	Since $\xi$ is purely atomic, we have that $\xi_+$ and $\xi_-$ are purely atomic. So, by Corollary \ref{co-9.1.VII} we obtain that $\xi_+$ has a one-to-one almost sure correspondence with a marked point process, say $N_{\xi_+}$. Now, by slightly adapting the arguments of Lemma \ref{lem-correspondence-pos} we obtain that there exists a one-to-one correspondence, both ways measurable, between a purely atomic signed measure $-\mu$ where $\mu\in\mathcal{M}_S$ and  a counting measure $N_{-\mu}(A\times B)=\sum_{i}\delta_{(s_i,x_i)}(A\times B)$ on the Borel sets of $S\times (-\infty,0)$ such that for every $A\in\hat{\mathcal{S}}$
	\begin{equation*}
		\int_{A\times (-\infty,0)}x N_{-\mu}(ds\times dx)=\sum_{i:s_i\in A}x_i>-\infty,
	\end{equation*}
	and the correspondence for every $A\in\hat{\mathcal{S}}$ and $B\in \mathcal{B} ((-\infty,0))$ with $0\notin\bar{B}$ is given by 
	\begin{align*}
		-\mu(A)&=	\int_{A\times (-\infty,0)} xN_{-\mu}(ds\times dx)\quad\textnormal{and}\\
		N_{-\mu}(A\times B)&=\lim\limits_{\varepsilon\to 0}\lim\limits_{n\to\infty}\sum_{j}\delta_{-\mu(A_{nj})}(B_{\varepsilon}),\quad\text{for $B$ open,}\\
		N_{-\mu}(A\times B)&=\lim\limits_{\varepsilon\to 0}\lim\limits_{n\to\infty}\sum_{j}\delta_{-\mu(A_{nj})}(B^{\varepsilon}),\quad\text{for $B$ closed,}
	\end{align*}
	where $(A_{nj})$ is a dissection system of measurable subsets of $A$. Thus, we obtain that $-\xi_-$ has a one-to-one a.s.~correspondence with a marked point process with only negative marks, say $N_{-\xi_-}$. Alternatively, by Corollary \ref{co-9.1.VII} we have a one-to-one correspondence between $\xi_-$ and a marked point process, say $\tilde{N}_{\xi_-}$, and by setting $N_{\xi_-(\omega)}(A\times B):=\tilde{N}_{\xi_-(\omega)}(A\times -B)$  for every $\omega\in\Omega$, $A\in\hat{\mathcal{S}}$ and $B\in \mathcal{B} ((-\infty,0))$, we get
	\begin{equation*}
		-\int_{A\times(0,\infty)} s\tilde{N}_{\xi_-(\omega)}(ds\times dx)=-\sum_{i:s_i\in A}x_i		=\sum_{i:s_i\in A}-x_i=\int_{A\times (-\infty,0)} sN_{-\xi_-(\omega)}(ds\times dx).
	\end{equation*}
	Now, extend $N_{\xi_+}$ to $S\times (\mathbb{R}\setminus\{0\})$ by setting $N_{\xi_+}\equiv0$ on $S\times(-\infty,0)$ (and call also this extension $N_{\xi_+}$), and similarly extend $N_{-\xi_-}$ to $S\times (\mathbb{R}\setminus\{0\})$ by setting $N_{\xi_-}\equiv0$ on $S\times(0,\infty)$ (and call this extension $N_{-\xi_-}$). Now, let $N_\xi\stackrel{a.s.}{=}N_{\xi_+}+N_{-\xi_-}$. Notice that $N_\xi$ is a well-defined random measure since is the sum of two random measures and that there is a one-to-one correspondence between $(N_{\xi_+},N_{-\xi_-})$ and $N_{\xi}$ because $N_{\xi_+}(\cdot)=N_{\xi}(S\times(0,\infty)\cap \cdot)$ and $N_{-\xi_-}(\cdot)=N_{\xi}(S\times(-\infty,0)\cap \cdot)$ by definition. 
	
	Further, since there is a one-to-one correspondence between $\xi$ and $(\xi_+,\xi_-)$ and between $(N_{\xi_+},N_{-\xi_-})$ and $N_{\xi}$, and a one-to-one a.s.~correspondence between $(\xi_+,\xi_-)$ and $(N_{\xi_+},N_{-\xi_-})$, we obtain a one-to-one a.s.~correspondence between $\xi$ and $N_{\xi}$.
	
	Moreover, since $\xi=\xi_+-\xi_-$, we have that equations (\ref{eq-0}), (\ref{eq-1}) and (\ref{eq-2}) follow if and only if $N_\xi\stackrel{a.s.}{=}N_{\xi_+}+N_{\xi_-}$. Indeed, we have the following. First, almost surely we have
	\begin{equation*}
		\int_{A\times \mathbb{R}\setminus\{0\}} sN_{\xi}(ds\times dx)= \int_{A\times \mathbb{R}\setminus\{0\}} sN_{\xi_+}(ds\times dx)+	\int_{A\times \mathbb{R}\setminus\{0\}} sN_{-\xi_-}(ds\times dx)
	\end{equation*}
	\begin{equation*}
		=\xi_+(A)-	\xi_-(A)= \xi(A)
	\end{equation*}
	and
	\begin{equation*}
		\int_{A\times \mathbb{R}\setminus\{0\}} |s|N_{\xi}(ds\times dx)= |\xi|(A)<\infty.
	\end{equation*}
	Second, let $B$ be the open set $(a,b)$, for some $0<a<b<\infty$. Fix $\omega\in\Omega$. We will show that
	\begin{equation}\label{delta}
		\lim\limits_{\varepsilon\to0}\lim\limits_{n\to\infty}\sum_{j}\delta_{\xi_+(\omega, A_{nj})}(B_\varepsilon)+	\lim\limits_{\varepsilon\to0}\lim\limits_{n\to\infty}\sum_{j}\delta_{-\xi_-(\omega,A_{nj})}(B_{\varepsilon})				=	\lim\limits_{\varepsilon\to0}\lim\limits_{n\to\infty}\sum_{j}\delta_{\xi(\omega,A_{nj})}(B_{\varepsilon}).
	\end{equation}
	Since $|\xi|(\omega,A)<\infty$, $|\xi|(\omega,\cdot)$ and so $\xi(\omega,\cdot)$ have only finitely many atoms with values in $(-\infty,b]\cup[b,\infty])$. Further, by definition $\xi_+(\omega,\cdot)$ and $\xi_-(\omega,\cdot)$ have atoms at different locations. Thus, there is an $n'$ large enough such that each of these atoms belongs to a different $A_{nj}$ for every $n>n'$. The following arguments are similar to the ones used in the proof of Lemma \ref{lem-correspondence-pos}; where now we use that $|\xi|(\omega,A)<\infty$ instead of $\mu(A)<\infty$. 
	
	The atoms of $\xi(\omega,\cdot)$ of values in $(a,b)$ belong to $(a+\varepsilon,b-\varepsilon)$, that is in $B_\varepsilon$, for any $0<\varepsilon<\varepsilon^*$, for some $\varepsilon^*>0$. Thus, for every $n$ large enough these atoms are always counted, namely there is an $n''$ such that the $A_{nj}$'s containing each of these atoms are such that $\delta_{\xi(\omega,A_{nj})}(B_{\varepsilon})=1$ for every $n>n''$. For the atoms with values greater than or equal to $b$ we have the following. Consider an atom with value greater than or equal to $b$. Then there might be a concentration of atoms with negative values around it. Notice that there can be at most $\frac{|\xi|(\omega,A)}{b}$ many such atoms. Thus, for any $\varepsilon$ there is an $n$ large enough such that these atoms are never counted. For the atoms with values in $[-a,a]$ the same arguments put forward in the proof of Lemma \ref{lem-correspondence-pos} for the case of atoms with values smaller than or equal to $a$ hold here (again using that $|\xi|(\omega,A)<\infty$). For the atoms with values in $(-\infty,-a)$ we have that they are finitely many and so $A_{nj}$'s containing each of these atoms are such that $\delta_{\xi(\omega,A_{nj})}(B_{\varepsilon})=0$ for every $n$ large enough, hence they will never be counted. 
	
	Therefore, for any $\varepsilon<\varepsilon^*$ and for every $n>n_\varepsilon$, for some $n_\varepsilon\in\mathbb{N}$, we have that 
	\begin{equation*}
		\sum_{i}\delta_{(s_i,\kappa_i)}(A\times B)=\sum_{j}\delta_{\xi(\omega,A_{nj})}(B_{\varepsilon})
	\end{equation*}
	where $\{(s_i,\kappa_i)_{i\in\mathbb{N}}\}$ are the atoms of $\xi(\omega,\cdot)$ with their respective values, thus $\kappa_i=\xi(\omega,\{s_i\})$. Using that $\xi_+(\omega,\cdot)$ and $\xi_-(\omega,\cdot)$ have atoms at different locations then by denoting $\{(s_{+,i},\kappa_{+,i})_{i\in\mathbb{N}}\}$ and $\{(s_{-,i},\kappa_{-,i})_{i\in\mathbb{N}}\}$ the atoms with respective weights of $\xi_+(\omega,\cdot)$ and $\xi_-(\omega,\cdot)$, respectively, we obtain that
	\begin{equation*}
		\sum_{i}\delta_{(s_{+,i},\kappa_{+,i})}(A\times B)+\sum_{j}\delta_{(s_{-,j},\kappa_{-,j})}(A\times B)=\sum_{i}\delta_{(s_i,\kappa_i)}(A\times B)=\sum_{j}\delta_{\xi(\omega,A_{nj})}(B_{\varepsilon})
	\end{equation*}
	from which we obtain (\ref{delta}). The same arguments apply to the case of $B$ being any open set and similar arguments apply for $B$ being closed. Thus, we have almost surely 
	\begin{align*}
		N_{\xi}(A\times B)=&N_{\xi_+}(A\times B)+N_{-\xi_-}(A\times B)\\
		=&\lim\limits_{\varepsilon\to 0}\lim\limits_{n\to\infty}\sum_{j}\delta_{\xi_+(A_{nj})}(B_\varepsilon)+	\lim\limits_{\varepsilon\to0}\lim\limits_{n\to\infty}\sum_{j}\delta_{-\xi_-(A_{nj})}(B_\varepsilon)\\
		=&\lim\limits_{n\to\infty}\sum_{j}\delta_{\xi(A_{nj})}(B_\varepsilon),\quad\textnormal{for $B$ open and}\\
		N_{\xi}(A\times B)=&N_{\xi_+}(A\times B)+N_{-\xi_-}(A\times B)\\
		=&\lim\limits_{\varepsilon\to 0}\lim\limits_{n\to\infty}\sum_{j}\delta_{\xi_+(A_{nj})}(B^\varepsilon)+\lim\limits_{\varepsilon\to 0}\lim\limits_{n\to\infty}\sum_{j}\delta_{-\xi_-(A_{nj})}(B^\varepsilon)\\
		=&\lim\limits_{n\to\infty}\sum_{j}\delta_{\xi(A_{nj})}(B^\varepsilon),\quad\textnormal{for $B$ closed.}
	\end{align*}
\end{proof}

\begin{proof}[Proof of Theorem \ref{thm-representation}]
	First, notice that if (\ref{eq-representation}) holds then $\xi$ has independent increments. For the other direction we proceed as follows.
	
	By Corollary \ref{co-atom} $\xi$ has at most countably many fixed points. By independence of the increments these fixed points constitute the first addendum in (\ref{eq-representation}). Thus, for the rest of this proof we let $\xi$ have no fixed atoms. 
	
	By Theorem 2.2 in \cite{PrekopaIII} we have that $\xi(A)$ is infinitely divisible for every $A\in\hat{\mathcal{S}}$. In particular, while Theorem 2.2 in \cite{PrekopaIII} applies to random measures on $\sigma$-rings, one can restrict $\xi$ to $\{B\in\hat{\mathcal{S}}:B\subset A\}$ which is a $\sigma$-algebra (thanks to Lemma \ref{215}) and so a $\sigma$-ring. Then, $\xi$ is an infinitely divisible independently scattered random measure according to \cite{RajRos} and by Lemmas 2.1 and 2.3 in \cite{RajRos} we have that the characteristic function of $\xi(A)$, for every $A\in\hat{\mathcal{S}}$ and $t\in\mathbb{R}$, is given by
	\begin{equation*}
		\exp\left(it\bigg(\nu_0(A)-\int_{|x|\leq 1}it\tau(x) F(A\times dx)\bigg)-\frac{1}{2}t^2\nu_1(A)+\int_{\mathbb{R}}e^{itx}-1 F(A\times dx)\right)
	\end{equation*}
	where 
	\begin{equation*}
		\tau(x):=\begin{cases}
			x\,\,&\textnormal{if }\,\,\,|x|\leq 1,\\ \frac{x}{|x|} \,\,&\textnormal{if }\,\,\,|x|> 1,
		\end{cases}
	\end{equation*}
	$\nu_0$ is the difference of two measures in $\mathcal{M}_S$, $\nu_1\in\mathcal{M}_S$, and by Lemma 2.3 in \cite{RajRos} $F$ is measure on $\mathcal{S}\otimes\mathcal{B} (\mathbb{R})$ such that $F(A\times \cdot)$ is a L\'{e}vy measure, that is $\int_{\mathbb{R}}1\wedge x^2 F(A\times dx)<\infty$ and $F(A\times \{ 0\})=0$. By Theorem 2.3 in \cite{PrekopaIII} $\nu_0$, $\nu_1$, and $A\mapsto F(A\times B)$, for every $B\in \mathcal{B} (\mathbb{R})$ such that $0\notin\bar{B}$, are atomless. Since $F(A\times \{ 0\})=0$, we can rewrite the characteristic function of $\xi(A)$ as
	\begin{equation*}
		\exp\left(it\bigg(\nu_0(A)-\int_{|x|\leq 1}it\tau(x) F(A\times dx)\bigg)-\frac{1}{2}t^2\nu_1(A)+\int_{\mathbb{R}\setminus\{0\}}e^{itx}-1 F(A\times dx)\right).
	\end{equation*}
	In particular, by simple arguments as the ones put forward in \cite{HELLMUND}, $\xi$ has no Gaussian component and the number of small jumps cannot be too explosive, namely we have $\nu_1\equiv 0$ and
	\begin{equation*}
		\int_{\mathbb{R}\setminus\{0\}}1\wedge|x| F(A\times dx)<\infty,\quad\textnormal{for every $A\in\hat{\mathcal{S}}$.}
	\end{equation*}
	
	Moreover, by Campbell's theorem and by adapting the arguments in Kingman's book \cite{King} there is a Poisson process $\Psi$ on $S\times(\mathbb{R}\setminus\{0\})$ such that
	\begin{equation*}
		\mathbb{E}\bigg[\exp\bigg(it\int_{\mathbb{R}\setminus\{0\}}x\Psi(A\times dx)\bigg)\bigg]=	\exp\left(\int_{\mathbb{R}\setminus\{0\}}e^{itx}-1 F(A\times dx)\right),
	\end{equation*}
	for every $t\in\mathbb{R}$ and $A\in\hat{\mathcal{S}}$. Then, a potential candidate for the deterministic component $\alpha$ is then given by 
	\begin{equation*}
		\nu_0(A)-\int_{|x|\leq 1}itx F(A\times dx)  ,\quad\textnormal{for every $A\in\hat{\mathcal{S}}$.}
	\end{equation*}
	We have now described the distribution of $\xi$, in the following we show the almost sure representation. 
	
	From Lemma \ref{lem-correspondence} we can identify the component with random atoms as a signed marked signed point process on $\mathcal{S}$, say $N_\xi$. We now prove that $N_\xi$ is a Poisson random measure on $S\times(\mathbb{R}\setminus\{0\})$ by generalizing the arguments in the proof of Theorem 10.1.III in \cite{Daley}. Since by construction $N_\xi$ is a simple point process, by Theorem 3.17 in \cite{Kallenberg2} it is enough to show that $N_\xi$ has independent increments. 
	
	So consider two sets $D_1=A_1\times (a_1,b_1)$ and $D_2=A_2\times (a_1,b_1)$ for some sets $A_1,A_2\in\hat{\mathcal{S}}$ with $A_1\cap A_2=\emptyset$ and $a_1<b_1$ with $a_1,b_1\in (-\infty,0)$ or $a_1,b_1\in (0,\infty)$. Then, by the independence of the increments of $\xi$ we have that $N_\xi(D_1)$ is independent of $N_\xi(D_2)$. Consider the case $D_1=A_1\times (a_1,b_1)$ and $D_2=A_1\times (a_2,b_2)$ for some $a_2<b_2$ with $(a_2,b_2)\cap (a_1,b_1)=\emptyset$ and $a_2,b_2\in (-\infty,0)$ or $a_2,b_2\in (0,\infty)$. Let $(A_{nj})$ be a dissection system of measurable subsets of $A$, let $\varepsilon>0$ and let 
	\begin{equation*}
		Z_{\varepsilon, nj}:=\begin{cases}
			1\,&\textnormal{if $\xi(A_{nj})\in(a_1+\varepsilon,b_1-\varepsilon)$},\\0 \,\,&\textnormal{otherwise,}
		\end{cases}
		\quad
		Y_{\varepsilon, nj}:=\begin{cases}
			1\,&\textnormal{if $\xi(A_{nj})\in(a_2+\varepsilon,b_2-\varepsilon)$},\\0 \,\,&\textnormal{otherwise,}
		\end{cases}
	\end{equation*}
	and let $p_{\varepsilon, nj}=\mathbb{P}(Z_{\varepsilon,nj}=1)$ and $q_{\varepsilon, nj}=\mathbb{P}(Y_{\varepsilon,nj}=1)$. Then, by (\ref{eq-2}) we have $N_\xi(D_1)\stackrel{a.s.}{=}\lim\limits_{\varepsilon\to 0}\lim\limits_{n\to\infty}\sum_{j=1}^{k_n}Z_{\varepsilon,nj}$ and $N_\xi(D_2)\stackrel{a.s.}{=}\lim\limits_{\varepsilon\to 0}\lim\limits_{n\to\infty}\sum_{j=1}^{k_n}Y_{\varepsilon,nj}$. The independence of the increments property of $\xi$ and the dominated convergence theorem imply that for the joint probability generating function
	\begin{align}
		&\mathbb{E}\big[t_1^{N_\xi(D_1)}t_2^{N_\xi(D_2)}\big]=\lim\limits_{\varepsilon\to 0}\lim\limits_{n\to\infty}\prod_{j=1}^{k_n}\mathbb{E}\big[t_1^{Z_{\varepsilon,nj}}t_2^{Y_{\varepsilon,nj}}\big]\nonumber\\&	=\lim\limits_{\varepsilon\to 0}\lim\limits_{n\to\infty}\prod_{j=1}^{k_n}[1-p_{\varepsilon,nj}(1-t_1)-q_{\varepsilon,nj}(1-t_2)]\label{eq-t_1}
	\end{align}
	and
	\begin{align}
		&\mathbb{E}\big[t_1^{N_\xi(D_1)}\big]\mathbb{E}\big[t_2^{N_\xi(D_2)}\big]=\lim\limits_{\varepsilon\to 0}\lim\limits_{n\to\infty}\prod_{j=1}^{k_n}[1-p_{\varepsilon,nj}(1-t_1)][1-q_{\varepsilon,nj}(1-t_2)]\nonumber\\&	=\lim\limits_{\varepsilon\to 0}\lim\limits_{n\to\infty}\prod_{j=1}^{k_n}[1-p_{\varepsilon,nj}(1-t_1)-q_{\varepsilon,nj}(1-t_2)+p_{\varepsilon,nj}q_{\varepsilon,nj}(1-t_1)(1-t_2)]\label{eq-t_2}
	\end{align}
	where $t_1,t_2\in[0,1]$. If (\ref{eq-t_1}) and (\ref{eq-t_2}) are equal for all $t_1,t_2\in[0,1]$, then $N_\xi(D_1)$ and $N_\xi(D_2)$ are independent. Let $r_{\varepsilon,nj}=p_{\varepsilon,nj}(1-t_1)+q_{\varepsilon,nj}(1-t_2)$. Using the inequalities $x\leq-\log(1-x)\leq x/(1-x)$ for $0\leq x<1$, we obtain that for every $t_1,t_2\in[0,1]$
	\begin{equation}\label{10.1.9}
		0\leq -\sum_{j=1}^{k_n}\log(1-r_{\varepsilon,nj})-\sum_{j=1}^{k_n}r_{\varepsilon,nj}\leq R_{\varepsilon,n}:=\sum_{j=1}^{k_n}\frac{r_{\varepsilon,nj}^2}{1-r_{\varepsilon,nj}}\leq \frac{\max_j r_{\varepsilon,nj}}{1-\max_j r_{\varepsilon,nj}}\sum_{j=1}^{k_n}r_{\varepsilon,nj}.
	\end{equation}
	By the first inequality of (\ref{10.1.9}) we obtain that $		\sum_{j=1}^{k_n}r_{\varepsilon,nj}\leq -\sum_{j=1}^{k_n}\log(1-r_{\varepsilon,nj})$ and by taking the logarithm in (\ref{eq-t_1}) we obtain that
	\begin{equation*}
		\lim\limits_{\varepsilon\to 0}\lim\limits_{n\to\infty}-\sum_{j=1}^{k_n}\log(1-r_{\varepsilon,nj})=-\log 	\mathbb{E}\big[t_1^{N_\xi(D_1)}t_2^{N_\xi(D_2)}\big]
	\end{equation*}
	which is finite. Further, by Lemma 9.3.II in \cite{Daley} applied to $|\xi|$ we obtain that $\max_j r_{\varepsilon,nj}\to 0$ as $n\to\infty$. Therefore, $\lim\limits_{\varepsilon\to 0}\lim\limits_{n\to\infty}R_{\varepsilon,n}=0$. Now, we focus on (\ref{eq-t_2}). We can apply the same arguments used for $r_{\varepsilon,nj}$ to 
	\begin{equation*}
		\tilde{r}_{\varepsilon,nj}=p_{\varepsilon,nj}(1-t_1)+q_{\varepsilon,nj}(1-t_2)-p_{\varepsilon,nj}q_{\varepsilon,nj}(1-t_1)(1-t_2),
	\end{equation*}
	and obtain that
	\begin{equation*}
		\mathbb{E}\big[t_1^{N_\xi(D_1)}\big]\mathbb{E}\big[t_2^{N_\xi(D_2)}\big]=\lim\limits_{\varepsilon\to 0}\lim\limits_{n\to\infty}\sum_{j=1}^{k_n}p_{\varepsilon,nj}(1-t_1)+q_{\varepsilon,nj}(1-t_2)-p_{\varepsilon,nj}q_{\varepsilon,nj}(1-t_1)(1-t_2).
	\end{equation*}
	Since
	\begin{equation*}
		0\leq\lim\limits_{n\to\infty}\sum_{j=1}^{k_n}p_{\varepsilon,nj}q_{\varepsilon,nj}(1-t_1)(1-t_2)\leq  \max_i r_{\varepsilon,ni}\sum_{j=1}^{k_n}r_{\varepsilon,nj},
	\end{equation*}
	we obtain that $\lim\limits_{\varepsilon\to 0}\lim\limits_{n\to\infty}\sum_{j=1}^{k_n}p_{\varepsilon,nj}q_{\varepsilon,nj}(1-t_1)(1-t_2)=0$. Finally, since 
	\begin{align*}
		\mathbb{E}\big[t_1^{N_\xi(D_1)}t_2^{N_\xi(D_2)}\big]=\lim\limits_{\varepsilon\to 0}\lim\limits_{n\to\infty}\sum_{j=1}^{k_n}p_{\varepsilon,nj}(1-t_1)+q_{\varepsilon,nj}(1-t_2),
	\end{align*}
	we obtain that 
	\begin{align*}
		&\mathbb{E}\big[t_1^{N_\xi(D_1)}\big]\mathbb{E}\big[t_2^{N_\xi(D_2)}\big]\\&	=\lim\limits_{\varepsilon\to 0}\lim\limits_{n\to\infty}\sum_{j=1}^{k_n}p_{\varepsilon,nj}(1-t_1)+q_{\varepsilon,nj}(1-t_2)-p_{\varepsilon,nj}q_{\varepsilon,nj}(1-t_1)(1-t_2)\\&=\lim\limits_{\varepsilon\to 0}\lim\limits_{n\to\infty}\sum_{j=1}^{k_n}p_{\varepsilon,nj}(1-t_1)+q_{\varepsilon,nj}(1-t_2)-\lim\limits_{\varepsilon\to 0}\lim\limits_{n\to\infty}\sum_{i=1}^{k_n}p_{\varepsilon,ni}q_{\varepsilon,ni}(1-t_1)(1-t_2)\\&			=	\mathbb{E}\big[t_1^{N_\xi(D_1)}t_2^{N_\xi(D_2)}\big],
	\end{align*}
	where we used that for any sequences of functions $f_n$ and $g_n$ for which the iterated limits $\varepsilon\to 0,n\to\infty$ exist we have $\lim\limits_{\varepsilon\to 0}\lim\limits_{n\to\infty}f_n(\varepsilon)+g_n(\varepsilon)=\lim\limits_{\varepsilon\to 0}\lim\limits_{n\to\infty}f_n(\varepsilon)+\lim\limits_{\varepsilon\to 0}\lim\limits_{n\to\infty}g_n(\varepsilon)$.
	
	The same arguments apply for the case $D_1=A_1\times B_1$ and $D_2=A_1\times B_2$ for $B_1,B_2\subset\mathbb{R}\setminus\{0\}$ such that $0\notin \bar{B}_1$, $0\notin \bar{B}_2$, $B_1\cap B_2=\emptyset$, where each of $B_1$ and $B_2$ is either an open or a closed interval. Consider the intervals $C_1=[a_1,b_1)$, $C_{1,1}=[a_1,d_1]$, and $C_{1,2}=(d_1,b_1)$ and $C_2=[a_2,b_2)$, $C_{2,1}=[a_2,d_2]$, and $C_{2,2}=(d_2,b_2)$ for some $d_1\in (a_1,b_1)$ and $d_2\in (a_2,b_2)$. Since $N_\xi(C_1)\stackrel{a.s.}{=}N_\xi(C_{1,1})+N_\xi(C_{1,2})$ and $N_\xi(C_2)\stackrel{a.s.}{=}N_\xi(C_{1,1})+N_\xi(C_{2,2})$, by similar arguments we obtain the result for this case as well. 
	
	By induction we obtain the mutual independence of the collection of random variables $\{N_\xi(D_j):j=1,...,n \}$ where $D_j$ are rectangular and disjoint, as considered. Therefore, by Proposition 9.2.III in \cite{Daley} we obtain that $N_\xi$ has independent increments. Further, by letting 
	\begin{equation*}
		\alpha_+(B):=\xi_+(B)-\int_{0}^{\infty}x N_{\xi}(B\times dx)\quad\textnormal{and}\quad\alpha_-(B):=\xi_-(B)-\int_{-\infty}^{0}x N_{\xi}(B\times dx),
	\end{equation*}
	we have that $\alpha_+$ and $\alpha_-$ are diffuse random measures (namely without fixed and non-fixed atoms) with independent increments. Hence, by Theorem 3.17 in \cite{Kallenberg2} they are deterministic. In particular, they are atomless measures belonging to $\mathcal{M}_S$. Then, $\alpha$ in the statement is uniquely determined by the difference of $\alpha_+$ and $\alpha_-$.
	
	Finally, by uniqueness of the characteristic function (Lemma 2.1 in \cite{RajRos}) we can identify $\Psi$ with $N_{\xi}$ and obtain the stated properties of its intensity measure. The uniqueness of the representation (\ref{eq-representation}) follows by the uniqueness of $N_\xi$ (by Lemma \ref{lem-correspondence}), which also makes $\alpha_+$ and $\alpha_-$ (and so $\alpha$) uniquely defined, and by the uniqueness of the fixed atomic component by noticing that $X_i=\xi(\{s_i\})$ for any fixed point $s_i$.
\end{proof}

\begin{proof}[Proof of Corollary \ref{co-two-CRSM}]
	It follows from Theorem \ref{thm-representation}.
\end{proof}

\begin{proof}[Proof of Corollary \ref{co-two-CRSM-2}]
	The independence of $\Psi_1$ and $\Psi_2$ follows by Lemma \ref{lem-correspondence-pos}, in particular by the fact that they are measurable functions of $\eta_1$ and $\eta_2$ respectively. Moreover, the fact that $\eta_1$ and $\eta_2$ are the Jordan decomposition of $\xi$ comes from the fact that the random measures $\int_{0}^{\infty}x\Psi_1(\cdot\times dx)$ and $\int_{0}^{\infty}x\Psi_2(\cdot\times dx)$ are independent and their atoms are almost surely distinct because by Theorem 3.1.9 in \cite{Kallenberg2} (see also Theorem \ref{thm-representation}) we have $\mathbb{E}[\Psi_1(\{s\}\times (0,\infty))]=\mathbb{E}[\Psi_2(\{s\}\times (0,\infty))]=0$.
\end{proof}

\begin{proof}[Proof of Proposition \ref{pro-marked-signed}]
	We have that $\mathbb{E}[|\xi|(\{s\}\times T)]>0$ for at most countably many points in $S$, say $s_1,s_2,...\in S$, thus by independence we can focus on $\xi'=\xi-\sum_{k}(\delta_{s_k}\otimes\delta_{\tau_k})$. Notice that $\xi'(\cdot\times T)$ is atomless and has independent increments on $S$, however it is not necessarily locally finite thus we cannot directly apply Corollary \ref{co-two-CRSM}.  So, fix any $B\in \hat{\mathcal{S}}\otimes\hat{\mathcal{T}}$. Observe that $\xi'((\cdot\times T)\cap B)$ is an atomless almost surely finite signed simple point process on $S$ with independent increments. Thus, by Corollary  \ref{co-two-CRSM} its Jordan decomposition $\sup_{E\in \hat{\mathcal{S}}}\xi'(((\cdot\cap E)\times T)\cap B)$ and $\sup_{E\in \hat{\mathcal{S}}}-\xi'(((\cdot\cap E)\times T)\cap B)$ are independent atomless almost surely finite simple point processes with independent increments. Denote them by $\xi'_{+,B}$ and $\xi'_{-,B}$. 
	
	Since $\mathbb{E}[\xi'_{+,B}(\{s\})]\equiv0$ and $\mathbb{E}[\xi'_{-,B}(\{s\})]\equiv0$, by Theorem 3.17 (i) in \cite{Kallenberg2} they are Poisson. In particular, $\xi'_{+,B}(S)$ and $\xi'_{-,B}(S)$ are Poisson random variables. Since $B$ was arbitrary, by Theorem 2.15 in \cite{Kallenberg2} we know that there exists two almost surely unique random measures $\eta_+$ and $\eta_-$ on $S\times T$ such that 
	\begin{equation*}
		\eta_+(B)=\sup_{E\in \hat{\mathcal{S}}}\xi'(((S\cap E)\times T)\cap B)\quad\textnormal{and}\quad
		\eta_-(B)=\sup_{E\in \hat{\mathcal{S}}}-\xi'(((S\cap E)\times T)\cap B),
	\end{equation*}
	for every $B\in \hat{\mathcal{S}}\otimes\hat{\mathcal{T}}$ almost surely. Finally, by Corollary 3.9 in \cite{Kallenberg2} we obtain that $\eta_+$ and $\eta_-$ are independent Poisson random measures, and consequently they are also the Jordan decomposition of $\eta_+-\eta_-$.
\end{proof}

\begin{proof}[Proof of Theorem \ref{thm-ultra-representation}]
	We have that $\mathbb{E}[|\xi|(\{s\}\times T)]>0$ for at most countably many points in $S$ and we may separate the corresponding sum $\sum_{k}(\delta_{s_k}\otimes\beta_k)$, for some distinct points $s_1,s_2,...\in S$ and associated random signed measures $\beta_k$ on $\hat{\mathcal{T}}$, where the latter are mutually independent and independent of the remaining process $\xi'$. 
	
	We now show that the Jordan decomposition of $\xi'$, namely $\xi'_+$ and $\xi'_-$, have independent $S$-increments. Consider any disjoints sets $A,B\in\hat{\mathcal{S}}$ and any sets $C,D\in\hat{\mathcal{T}}$. For any $x,y\in\mathbb{R}$, we have
	\begin{align*}
		&	\mathbb{P}\left(\xi'_+(A\times C)<x,\xi'_+(B\times D)<y\right)\\&	=	\mathbb{P}\bigg(\bigcap_{E_1\times E_2\in\mathcal{R},E_1\subset A, E_2\subset C}\{\xi'(E_1\times E_2)<x\}\cap\bigcap_{F_1\times F_2\in\mathcal{R},F_1\subset B,F_2\subset D}\{\xi'(F_1\times F_2)<y\}\bigg)\\& 	=	\mathbb{P}\bigg(\bigcap_{E_1\times E_2\in\mathcal{R},E_1\subset A, E_2\subset C}\{\xi'(E_1\times E_2)<x\}\bigg)	\mathbb{P}\bigg(\bigcap_{F_1\times F_2\in\mathcal{R},F_1\subset B,F_2\subset D}\{\xi'(F_1\times F_2)<y\}\bigg),
	\end{align*}
	where $\mathcal{R}$ is the countable generating ring of $\mathcal{S}\otimes\mathcal{T}$, which is given by the product of the generating rings of $\mathcal{S}$ and of $\mathcal{T}$. The last equality follows from the fact that \begin{equation*}
		\bigcap_{E_1\times E_2\in\mathcal{R},E_1\subset A, E_2\subset C}\{\xi'(E_1\times E_2)<x\}
	\end{equation*}
	is an element of the $\sigma$-algebra 
	\begin{equation*}
		\sigma\left(\xi'(E_1\times E_2):E_1\times E_2\in\mathcal{R}, E_1\subset A, E_2\subset C\right)
	\end{equation*}
	and this $\sigma$-algebra is independent of  the $\sigma$-algebra 
	\begin{equation*}
		\sigma\left(\xi'(F_1\times F_2):F_1\times F_2\in\mathcal{R}, F_1\subset B, F_2\subset D\right),
	\end{equation*}
	because the random variables $\xi'(E_1\times E_2)$'s and $\xi'(F_1\times F_2)$'s are mutually independent.
	
	Thus, $\xi'_+$ has independent $S$-increments and the same holds for $\xi'_-$. Then, by Theorem 3.19 in \cite{Kallenberg2} we obtain that almost surely
	\begin{equation*}
		\xi'_+=\alpha_++\int\int (\delta_s\otimes\mu)\gamma_+(ds\,d\mu)\quad\textnormal{and}\quad \xi'_-=\alpha_-+\int\int (\delta_s\otimes\mu)\gamma_-(ds\,d\mu),
	\end{equation*}
	for some atomless $\alpha_+,\alpha_-\in\mathcal{M}_{S\times T}$ and some $\gamma_+$ and $\gamma_-$ Poisson processes on $S\times(\mathcal{M}_T\setminus\{0\})$. By the same arguments used to show that $\xi'_+$ has independent $S$-increments we obtain the independence between $\xi'_+(G_1\times G_2)$ and $\xi'_-(H_1\times H_2)$ for any $G_1\times G_2\in\hat{\mathcal{S}}\otimes\hat{\mathcal{T}}$ and $H_1\times H_2\in\hat{\mathcal{S}}\otimes\hat{\mathcal{T}}$ with $G_1$ and $H_1$ disjoint, and in particular we obtain that  $\xi'_+((G_1\times T)\cap \cdot)$ and $\xi'_-((H_1\times T)\cap\cdot)$ are independent random measures.
	
	The atomic part of $\xi'_+$ is given by $\sum_{k}\delta_{\sigma^{+}_{k}}\otimes\psi^+_k$ for some a.s.~distinct random elements $\sigma^+_1,\sigma^+_2,...$ in $S$ and associated random measures $\psi^{+}_k = \xi'_+(\{\sigma^{+}_{k}\}\times\cdot)$ on $T$. This atomic component is encoded in $\gamma_+$, in particular $\gamma_+=\sum_{k}\delta_{\sigma^{+}_{k}}\otimes\delta_{\psi^+_{k}}$, and similarly for $\gamma_-$. We will now show that the $(\mathcal{M}_T\setminus\{0\})$-marked signed point process $\gamma_+-\gamma_-$ has independent $S$-increments. By Theorem 3.19 and its proof in \cite{Kallenberg2}, we know that $\gamma_+$ depends measurably on $\xi'_+$, (and $\gamma_-$ depends measurably on $\xi'_-$). So, consider any sets $A,B\in\hat{\mathcal{S}}$ disjoint and any sets $C=\{\mu\in\mathcal{M}_T:\mu(E)\geq a\}$ and $D=\{\mu\in\mathcal{M}_T:\mu(F)\geq b\}$, for some $E,F\in\hat{\mathcal{T}}$ and $a,b>0$. In the following we use the symbol $\pm$ to indicate both $+$ and $-$; for example if we have $\gamma_\pm$ in an equation then we can substitute it with either $\gamma_+$ or $\gamma_-$. Then, we have 
	\begin{align*}
		&\mathbb{P}\Big(\gamma_\pm(A\times C)<x,\gamma_\pm(B\times D)<y\Big)\\&	=\mathbb{P}\left(\sum_{k}\delta_{\sigma^{\pm}_{k}}\otimes\delta_{\psi^\pm_{k}}(A\times C)<x,\sum_{k}\delta_{\sigma^{\pm}_{k}}\otimes\delta_{\psi^\pm_{k}}(B\times D)<y\right)\\& 	=\mathbb{P}\bigg(\xi'_{\pm}\in\{\mu\in\mathcal{M}_{S\times T}:\sum_{k}\delta_{\sigma^{\mu}_{k}}\otimes\delta_{\mu(\{\sigma^{\mu}_{k}\}\times\cdot)}(A\times C)<x\},\\&\quad\quad\xi'_{\pm}\in\{\mu\in\mathcal{M}_{S\times T}:\sum_{k}\delta_{\sigma^{\mu}_{k}}\otimes\delta_{\mu(\{\sigma^{\mu}_{k}\}\times\cdot)}(B\times D)<y\}\bigg)\\&	=\mathbb{P}\bigg(\xi'_{\pm}((A\times T)\cap\cdot)\in\{\mu\in\mathcal{M}_{S\times T}:\sum_{k}\delta_{\sigma^{\mu}_{k}}\otimes\delta_{\mu(\{\sigma^{\mu}_{k}\}\times\cdot)}(A\times C)<x\},\\&\quad\quad	\xi'_{\pm}(B\times T)\cap\cdot\in\{\mu\in\mathcal{M}_{S\times T}:\sum_{k}\delta_{\sigma^{\mu}_{k}}\otimes\delta_{\mu(\{\sigma^{\mu}_{k}\}\times\cdot)}(B\times D)<y\}\bigg)\\&	=\mathbb{P}\bigg(\xi'_{\pm}((A\times T)\cap\cdot)\in\{\mu\in\mathcal{M}_{S\times T}:\sum_{k}\delta_{\sigma^{\mu}_{k}}\otimes\delta_{\mu(\{\sigma^{\mu}_{k}\}\times\cdot)}(A\times C)<x\}\bigg)\\&	\quad\quad\mathbb{P}\bigg(\xi'_{\pm}(B\times T)\cap\cdot\in\{\mu\in\mathcal{M}_{S\times T}:\sum_{k}\delta_{\sigma^{\mu}_{k}}\otimes\delta_{\mu(\{\sigma^{\mu}_{k}\}\times\cdot)}(B\times D)<y\}\bigg)
	\end{align*}
	\begin{align*}
		& 	=\mathbb{P}\bigg(\sum_{k}\delta_{\sigma^{\pm}_{k}}\otimes\delta_{\psi^\pm_{k}}(A\times C)<x\Big)\mathbb{P}\Big(\sum_{k}\delta_{\sigma^{\pm}_{k}}\otimes\delta_{\psi^\pm_{k}}(B\times D)<y\bigg)\\&			=	\mathbb{P}\Big(\gamma_\pm(A\times C)<x\Big)\mathbb{P}\Big(\gamma_\pm(B\times D)<y\Big),
	\end{align*}
	where $\sigma^{\mu}_{k}$ indicates the $k$-th atoms of the measure $\mu$ and where we used that sets of the form of $A\times C$ and $B\times D$ are the sets generating $\hat{\mathcal{S}}\otimes\hat{\mathcal{B}}_{\mathcal{M}_\mathcal{T}\setminus\{0\}}$. Thus, we obtain that $\gamma_+-\gamma_-$ is an $(\mathcal{M}_T\setminus\{0\})$-marked signed point process with independent $S$-increments. 
	
	By Proposition \ref{pro-marked-signed} we have that there exist two independent Poisson processes $\eta_+$ and $\eta_-$ on $S\times( \mathcal{M}_T\setminus\{0\})$ such that $\gamma_+-\gamma_-\stackrel{a.s.}{=}\eta_+-\eta_-$. By Proposition \ref{pro-marked-signed}  $\eta_+$ and $\eta_-$ are the Jordan decomposition of $\gamma_+-\gamma_-$. Hence,
	\begin{equation*}
		\alpha_++\int\int (\delta_s\otimes\mu)\eta_+(ds\,d\mu)\quad\textnormal{and}\quad \alpha_-+\int\int (\delta_s\otimes\mu)\eta_-(ds\,d\mu)
	\end{equation*}
	are also the Jordan decomposition of $\xi'$, which implies that $\xi'_+$ and $\xi'_-$ are independent. Since $\gamma_+$ depends measurably on  $\xi'_+$ (and $\gamma_-$ on $\xi'_-$ ) we obtain that $\gamma_+$  and $\gamma_-$ are independent. Hence, $\gamma_+\stackrel{a.s.}{=}\eta_+$ and $\gamma_-\stackrel{a.s.}{=}\eta_-$. The rest of the statement follows from Theorem 3.19 in \cite{Kallenberg2} (we remark that the statement of Theorem 3.19 in \cite{Kallenberg2} has a typo in equation (8), it should be $C\in\hat{\mathcal{T}}$ and not $C\in\hat{\mathcal{M}}_T$).
\end{proof}

\begin{proof}[Proof of Lemma \ref{lem-Borel-disjoint-union}]
	It follows from the fact that the disjoint union of Borel spaces is a Borel space (see Section 12.B in \cite{Kechris}).
\end{proof}

\begin{proof}[Proof of Corollary \ref{co-incited-5}]
	It follows from Theorem \ref{thm-ultra-representation} and Lemma \ref{lem-Borel-disjoint-union}.
\end{proof}

\begin{proof}[Proof of Corollary \ref{co-ultra-two-CRSM-2}]
	It follows from Theorem \ref{thm-ultra-representation} and Lemma \ref{lem-Borel-disjoint-union} using the same arguments as in the proof of Corollary \ref{co-two-CRSM-2}.
\end{proof}

\subsection*{Proofs of Sections \ref{Sec-Examples} and  \ref{Sec-applications}}

\begin{proof}[Proof of Proposition \ref{pro-Skellam}]
	Since $\xi$ has independent increments then $\xi_+$ and $\xi_-$ are two independent CRMs without fixed atoms. The mass of each atom is 1, hence by Theorem 3.17 in \cite{Kallenberg2} $\xi_+$ and $\xi_-$ are two independent Poisson processes. The statement for $\eta$ follows by definition.
\end{proof}

\begin{proof}[Proof of Proposition \ref{pro-Bay}]
	The proof generalizes the arguments used in the proof of Theorem 3.1 in \cite{Bro1}. To lighten the notation let $X:=X_1$. Let us first prove the result for $\Theta|X$. Any fixed atom $\theta_{fix,k}\delta_{\psi_{fix,k}}$ in the prior is independent of the other fixed atoms and of the ordinary component. Thus, all of $X$ except $x_{fix,k}:=X(\{\psi_{fix,k}\})$ is independent of $\theta_{fix,k}$. Hence, $\Theta|X$ has a fixed atom at $\psi_{fix,k}$ and by Bayes theorem we obtain $F_{post,fix,k}(d\theta)\varpropto F_{fix,k}(d\theta)h(x_{fix,k}|\theta)$.
	
	Since $G$ is continuous, all the fixed and non-fixed atoms of $\Theta$ are at a.s.~distinct locations. By letting $\Psi_{fix}:=\{\psi_{fix,1},...,\psi_{fix,K_{fix}}\}$ we can define the fixed and ordinary component of $X$ by $X_{fix}(A):=X(A\cap \Psi_{fix})$ and $X_{ord}(A):=X(A\cap (\Psi\setminus\Psi_{fix}))$, respectively.
	
	Let $\{\psi_{new,x,1},...,\psi_{new,x,K_{new,x}}\}$ be all the locations of atoms in $X_{ord}$ of size $x\in\mathbb{Z}\setminus\{0\}$, which is finite and it is a subset of the locations of atoms of $\Theta_{ord}$ by assumption A2. Further, let $\theta_{new,x,k}:=\Theta(\{\psi_{new,x,k}\})$. The values $\{\theta_{new,x,k} \}_{k=1}^{K_{new,x}}$ are generated from a thinned Poisson point process with intensity measure $\nu_{x}(d\theta)=\nu(d\theta)h(x|\theta)$ on $\mathbb{R}\setminus\{0\}$, and this is due to the $h(x|\theta)$-thinning of the Poisson point process $\sum_{k=1}^{K_{ord}}\delta_{\theta_{ord,k}}$ which has intensity measure $\nu$ on $\mathbb{R}\setminus\{0\}$. Further, since $\int_{\mathbb{R}\setminus\{0\}}h(x|\theta)\nu(d\theta)<\infty$ by assumption A2, we have that $\mathcal{L}(\theta_{post,new,x,k})\varpropto\nu(d\theta)h(x|\theta)$.
	
	When the likelihood draw returns a zero, the atoms in $\Theta_{ord}$ are not observed in $X_{ord}$. Thus, by proceeding as in the previous paragraph with $x=0$ we obtain the last statement.
	
	Considering $\Theta|X_{1}$ as the new prior we obtain the formulation for the posterior $\Theta|X_{1},X_{2}$ by induction and by observing that the assumptions are still satisfied by $\Theta|X_{1}$. Then, by induction we conclude the proof.
\end{proof}

\begin{proof}[Proof of Proposition \ref{pro-Bay-2}]
	The arguments are essentially the ones of the proof of Proposition \ref{pro-Bay}. We only write the ones for the second statement explicitly because they require more attention. Let $A\in\mathcal{B}(\mathbb{R}\setminus\{0\})$ and let $\{\psi_{new,A,1},...,\psi_{new,A,K_{new,A}}\}$ be all the locations of atoms in $X_{ord}$ of size $x\in A$, which is finite and it is a subset of the locations of atoms of $\Theta_{ord}$ by assumption A2'. Let $\theta_{new,A,k}:=\Theta(\{\psi_{new,A,k}\})$. The values $\{\theta_{new,A,k} \}_{k=1}^{K_{new,A}}$ are generated from a thinned Poisson point process with intensity $\nu_{A}(d\theta)=\nu(d\theta)H_{ord}(A|\theta)$ on $\mathbb{R}\setminus\{0\}$, and this is due to the $H_{ord}(A|\theta)$-thinning of the Poisson point process $\sum_{k=1}^{K_{ord}}\delta_{\theta_{ord,k}}$ which has intensity measure $\nu$ on $\mathbb{R}\setminus\{0\}$. Further, since $\int_{\mathbb{R}\setminus\{0\}}H_{ord}(A|\theta)\nu(d\theta)<\infty$ by assumption A2', we have that $\mathcal{L}(\theta_{post,new,A,k})\varpropto\nu(d\theta)H_{ord}(A|\theta)$.
\end{proof}

\begin{proof}[Proof of Proposition \ref{pro-exchange}]
	It is possible to see that $Z$ is the difference of two independent random measures $Z_+$ and $Z_-$, where $Z_+$ is given by the hierarchical model in equation (17) in \cite{Caron-and-Fox} where the generative CRM is $W_+$  and the multigraph is $D_+$, and similarly $Z_-$ is given by $W_-$ and $D_-$. Since by Proposition 1 in \cite{Caron-and-Fox} we have that $Z_+$ and $Z_-$ are jointly exchangeable, by independence we obtain the result.
\end{proof}

\begin{proof}[Proof of Proposition \ref{pro-sparse}]
	Recall the definition of $Z_+$ and $Z_-$ from the proof of Proposition \ref{pro-exchange}. It is possible to notice that $Z_+$ and $Z_-$ are the Jordan decomposition of $Z$. Indeed, $Z_+$ and $Z_-$ have almost surely distinct atoms. Moreover, let 
	\begin{align*}
		N_{\alpha,+}&:=\textnormal{card}\{\theta_i\in[0,\alpha]:Z_+(\{\theta_i\}\times[0,\alpha])\neq 0\}\\&=\textnormal{card}\{\theta_i\in[0,\alpha]:Z_+(\{\theta_i\}\times[0,\alpha])> 0\},\\
		N_{\alpha,+}^{(e)}&:=Z_+(\{(x,y)\in[0,\infty)^2: 0\leq x\leq y\leq\alpha\}),\\
		N_{\alpha,-}&:=\textnormal{card}\{\theta_i\in[0,\alpha]:Z_-(\{\theta_i\}\times[0,\alpha])\neq 0\}\\&=\textnormal{card}\{\theta_i\in[0,\alpha]:Z_+(\{\theta_i\}\times[0,\alpha])< 0\},\\
		N_{\alpha,-}^{(e)}&:=Z_-(\{(x,y)\in[0,\infty)^2: 0\leq x\leq y\leq\alpha\}).
	\end{align*}
	Observe that $N_\alpha\stackrel{a.s.}{=}N_{\alpha,+}+N_{\alpha,-}$ and $N_\alpha^{(e)}\stackrel{a.s.}{=}N_{\alpha,+}^{(e)}+N_{\alpha,-}^{(e)}$. The latter follows from the fact that $Z_+$ and $Z_-$ are the Jordan decomposition of $Z$. The former follows from the fact that when $w_i(\omega)> 0$, for some $\omega\in\Omega$, then $\theta_i$ can only have connections with the $\theta$s whose weights $w$s are positive (and similarly when $w_i(\omega)< 0$). 
	
	Since $N_{\alpha,+}$ and $N_{\alpha,+}^{(e)}$ are the number of observed nodes and the number of the edges of $Z_+$ restricted on the square $[0,\alpha]^2$ (and similarly for $Z_-$), by Theorem 2 in \cite{Caron-and-Fox} we obtain the following. When $\int_{\mathbb{R}\setminus\{0\}}\rho(dw)<\infty$, namely when $W_+$ and $W_-$ have finite activity (\textit{i.e.}~$\int_{0}^{\infty}\rho(dw)<\infty$ and $\int_{-\infty}^{0}\rho(dw)<\infty$), we have $N_{\alpha,+}^{(e)}=\Theta(N_{\alpha,+}^2)$ and $N_{\alpha,-}^{(e)}=\Theta(N_{\alpha,-}^2)$ almost surely as $\alpha\to\infty$. Hence, almost surely as $\alpha\to\infty$
	\begin{equation*}
		N_{\alpha}^{(e)}=N_{\alpha,+}^{(e)}+N_{\alpha,-}^{(e)}=\Theta(N_{\alpha,+}^2)+\Theta(N_{\alpha,-}^2)=\Theta((N_{\alpha,+}+N_{\alpha,-})^2)=\Theta(N_{\alpha}^2),
	\end{equation*}
	When $\int_{\mathbb{R}\setminus\{0\}}\rho(dw)=\infty$, namely when at least one of $W_+$ and $W_-$ has infinite activity we have that $N_{\alpha,+}^{(e)}=o(N_{\alpha,+}^2)$ or $N_{\alpha,-}^{(e)}=o(N_{\alpha,-}^2)$ or both almost surely as $\alpha\to\infty$, and therefore $N_{\alpha}^{(e)}=o(N_{\alpha}^2)$ almost surely as $\alpha\to\infty$.
\end{proof}
	
	\bibliographystyle{chicago}
	\bibliography{bib-Random-measures}
	
	
\end{document}